\title{The Morita-equivalence between MV-algebras and abelian $\ell$-groups with strong unit}
\author{Olivia Caramello\thanks{Supported by a CARMIN IH\'ES-IHP post-doctoral position (as from 1/12/2013) and by a visiting position of the Max Planck Institute for Mathematics (in the period 1/10/2013 - 30/11/2013).} \textrm{ }and Anna Carla Russo}
\date{April 22, 2014}
\mathchardef\semicolon="603B 
\mathchardef\gt="313E
\mathchardef\lt="313C
\newcommand{\cod}
 {{\rm cod}}
\newcommand{\comp}
 {\circ}
\newcommand{\Cont}
 {{\bf Cont}}
\newcommand{\dom}
 {{\rm dom}}
\newcommand{\empstg}
 {[\,]}
\newcommand{\epi}
 {\twoheadrightarrow}
\newcommand{\hy}
 {\mbox{-}}
\newcommand{\im}
 {{\rm im}}
\newcommand{\imp}
 {\!\Rightarrow\!}
\newcommand{\Ind}[1]
 {{\rm Ind}\hy #1}
\newcommand{\mono}
 {\rightarrowtail}
\newcommand{\name}[1]
 {\mbox{$\ulcorner #1 \urcorner$}}
\newcommand{\ob}
 {{\rm ob}}
\newcommand{\op}
 {^{\rm op}}
\newcommand{\Set}
 {{\bf Set }}
\newcommand{\Sh}
 {{\bf Sh}}
\newcommand{\sh}
 {{\bf sh}}
\newcommand{\Sub}
 {{\rm Sub}}
\newtheorem{theorem}{Theorem}[section]
\theoremstyle{definition}
\newtheorem{definition}[theorem]{Definition}
\newtheorem{obs}[theorem]{Remark}
\newtheorem{obss}[theorem]{Remarks}
\newtheorem{pro}[theorem]{Proposition}
\newtheorem{cor}[theorem]{Corollary}
\newtheorem{remarks}[theorem]{Remarks}
\newcounter{commento}
\begin{document}

\maketitle
\begin{abstract}
We show that the theory of MV-algebras is Morita-equivalent to that of abelian $\ell$-groups with strong unit. This generalizes the well-known equivalence between the categories of set-based models of the two theories established by D. Mundici in 1986, and allows to transfer properties and results across them by using the methods of topos theory. We discuss several applications, including a sheaf-theoretic version of Mundici's equivalence and a bijective correspondence between the geometric theory extensions of the two theories.
\end{abstract}

\tableofcontents

\section{Introduction}

This paper is a contribution to the research programme `toposes as bridges' introduced in \cite{OC10}, which aims at developing the unifying potential of the notion of Grothendieck topos as a  means for relating different mathematical theories to each other through topos-theoretic invariants. The general methodology outlined therein is applied here to a duality of particular interest in the field of many-valued logics, namely Mundici's equivalence between the category of MV-algebras and that of abelian $\ell$-groups with a distinguished strong unit.   

The class of structures known as MV-algebras was introduced in 1958 by C. C. Chang (cf. \cite{chang1} and \cite{chang2}) in order to provide an algebraic proof of the completeness of \L ukasiewicz's multi-valued propositional logic. In the following years, the theory of MV-algebras has found applications in a variety of areas such as lattice-ordered abelian group theory (cf. \cite{Mundici1}, \cite{DiNolaLettieri}), functional analysis (cf. \cite{Mundici2}) and fuzzy set theory (cf. \cite{Belluce}).  

Notably, in 1986 (\cite{Mundici1} and \cite{Mundici2}) D. Mundici established an equivalence between the category of MV-algebras and that of lattice ordered abelian groups with a distinguished strong unit, extending Chang's equivalence (\cite{chang2}) between the category of MV-chains and that of totally ordered abelian groups with a distinguished strong unit.

In this paper we interpret Mundici's equivalence as an equivalence of categories of set-based models of two geometric theories, namely the algebraic theory of MV-algebras and the theory of abelian $\ell$-groups with strong unit, and show that this equivalence generalizes over an arbitrary Grothendieck topos yielding a Morita-equivalence between the two theories.

The fact that these two theories have equivalent classifying toposes - rather than merely equivalent categories of set-based models - has non-trivial consequences, some of which are explored in the final section of the paper (cf. also \cite{OC10} for a general overview of the significance of the notion of Morita-equivalence in Mathematics). For instance, the established Morita-equivalence provides, for any topological space $X$, an equivalence between the category of sheaves of MV-algebras over $X$ and the category of sheaves of abelian $\ell$-groups with unit over $X$ whose stalks are $\ell$-groups with strong unit, which is natural in $X$ and which extends Mundici's equivalence at the level of stalks. Another corollary is the fact that the geometric theory extensions of the two theories in their respective languages correspond to each other bijectively, a result that cannot be proved directly since - as we show in section \ref{interpretation} - the two theories are not bi-interpretable.   

The plan of the paper is as follows.  

In section 2 we fix the terminology and notation. 

In section 3 we introduce axiomatizations of the theory of MV-algebras and of that of abelian $\ell$-groups with strong unit within geometric logic, and describe the models of the two theories in an arbitrary Grothendieck topos.

In section \ref{s:generalization}, after reviewing the classical Mundici's equivalence, we proceed to generalizing it to the topos-theoretic setting, by defining, for every Grothendieck topos $\mathscr{E}$, two functors: one from the category of models of abelian $\ell$-groups with strong unit in $\mathscr{E}$ to the category for MV-algebras in $\mathscr{E}$, the other in the opposite direction. Further, we show that these two functors are categorical inverses to each other and that the resulting equivalence is natural in $\mathscr{E}$, thus concluding that the two theories are Morita-equivalent. 

In section \ref{s:applications} we present the two above-mentioned applications of the Morita-equivalence just built, and obtain a logical characterization of the $\ell$-groups corresponding to finitely presented MV-algebras. Specifically, we show that such groups can be characterized as the finitely presented abelian $\ell$-groups with unit whose unit is strong or, equivalently, as the $\ell$-groups presented by a formula which is irreducible relatively to the theory of abelian $\ell$-groups with strong unit (in the sense of section \ref{s:applications}). Moreover, in section \ref{logicprop} we show that the theory ${\mathbb L}_{u}$, whilst not finitary, enjoys a form of geometric compactness and completeness, by virtue of its Morita-equivalence with the finitary theory of MV-algebras.      

Section \ref{s:background} is an appendix in which we provide, for the benefit of non-specialists in topos theory, the basic topos-theoretic background necessary to understand the paper.

\section{Terminology and notation}\label{s:terminology and notation}

In this section we fix the relevant notation and terminology used throughout the paper.

All the toposes are Grothendieck toposes, and all the theories are geometric theories, if not otherwise specified (cf. section \ref{s:background} for the topos-theoretic background).

\begin{itemize}
\item[-] $\mathbb{MV}$, theory of MV-algebra;
\item[-] $\Sigma_{\mathbb{MV}}$, signature of the theory $\mathbb{MV}$;
\item[-] $\mathbb{L}_u$, theory of $\ell$-groups with stong unit;
\item[-] $\Sigma_{\mathbb{L}_u}$, signature of theory $\mathbb{L}_u$;
\item[-] $\mathbb{L}$, theory of $\ell$-groups with unit;
\item[-] $\mathbf{Set}$, category of sets and functions between sets;
\item[-] $\mathcal{MV}$, category of MV-algebras and MV-homomorphisms between them;
\item[-] $\mathcal{L}_u$, category of $\ell$-groups with strong unit and $\ell$-groups unital homomorphisms between them;
\item[-] $\mathbb{T}$-mod$(\mathscr{E})$, category whose objects are models of the theory $\mathbb{T}$ in the topos $\mathscr{E}$ and whose arrows are homomorphisms between them;
\item[-] $\mathscr{C}_{\mathbb T}$, geometric syntactic category of a geometric theory $\mathbb T$;
\item[-] $[[\vec{x}. \phi]]_{M}$, interpretation of a formula in a context $\vec{x}$ in a structure $M$;
\item[-] $\mathbf{Hom}_{geom}(\mathscr{C},\mathscr{D})$, category of geometric functors from a geometric category $\mathscr{C}$ to a geometric category $\mathscr{D}$;
\item[-] $\textrm{f.p.} {\mathbb T}\textrm{-mod}(\Set)$, category whose objects are the finitely presentable models of a geometric theory $\mathbb T$ (i.e., the models such that the hom functor $Hom_{{\mathbb T}\textrm{-mod}(\Set)}(M, -):{\mathbb T}\textrm{-mod}(\Set) \to \Set$ preserves filtered colimits) and whose arrows are the $\mathbb T$-model homomorphisms between them;
\item[-] $\Set[{\mathbb T}]$, classifying topos of a geometric theory $\mathbb T$;
\item[-] $\gamma_{\mathscr{E}}: \mathscr{E}\to \Set$, the unique (up to isomorphism) geometric morphism from a Grothendieck topos $\mathscr{E}$ to the category $\Set$ of sets.
\end{itemize}

\section{Axiomatizations}\label{s:syntax}

\subsection{Theory of MV-algebras $\mathbb{MV}$}
\begin{definition}\cite{CDM}
An \textit{MV-algebra} is an algebra $\mathcal{A}=(A,\oplus,\neg, 0)$ with a binary ope\-ration $\oplus$, an unary operation $\neg$ and a constant $0$, satisfying the following equations:
for any $x,y,z\in A$
\begin{enumerate}
\item $x\oplus(y\oplus z)=(x\oplus y)\oplus z$
\item $x\oplus y=y\oplus x$
\item $x\oplus 0= x$
\item $\neg \neg x=x$
\item $x\oplus \neg 0=\neg 0$
\item $\neg(\neg x\oplus y)\oplus y= \neg (\neg y\oplus x)\oplus x$
\end{enumerate}
\end{definition}

As a variety, the category of MV-algebras coincides with the category of models of a theory, which we denote by the symbol $\mathbb{MV}$. The signature $\Sigma_{\mathbb{MV}}$  of this theory consists of one sort, two function symbols and one constant:

\begin{itemize}
\item[] function symbols: $\oplus$ and $\neg$

\item[] constant symbol:  $0$
\end{itemize}
The axioms of $\mathbb{MV}$ are:
\begin{enumerate}
\item $\top \vdash_{x,y,z}x\oplus(y\oplus z)=(x\oplus y)\oplus z$
\item $(\top\vdash_{x,y}x\oplus y=y\oplus x)$
\item $(\top\vdash_{x}x\oplus 0=x)$
\item $(\top\vdash_{x}\neg \neg x=x)$
\item $(\top\vdash_{x}x\oplus \neg 0=\neg 0)$
\item $(\top\vdash_{x,y}\neg(\neg x\oplus y)\oplus y=\neg(\neg y\oplus x)\oplus x)$
\end{enumerate}

In any MV-algebra $\mathcal{A}$ there is a \textit{natural order} $\leq$ defined by: $x\leq y$ if and only if $\neg x\oplus y=\neg 0$ (for any $x,y\in A$); this is a partial order relation which induces a lattice structure on $\mathcal{A}$:
\begin{itemize}
\item[] $\sup(x,y):= (x\odot \neg y)\oplus y$;
\item[] $\inf(x,y):=x\odot(\neg x\oplus y)$,
\end{itemize} 
where $x\odot y:=\neg(\neg x\oplus \neg y)$.

We could thus enrich the signature of $\mathbb{MV}$ by introducing a `derived' relation symbol, $\leq$, and two `derived' operation symbols, $\sup$ and $\inf$.

The theory $\mathbb{MV}$ is clearly algebraic. A model of the theory $\mathbb{MV}$ in a category $\mathscr{E}$ with finite limits (in particular, a Grothendieck topos) consists of an object $M$, interpreting the unique sort of the signature $\Sigma_{\mathbb{MV}}$, an arrow $M\oplus : M\times M\rightarrow M$ in $\mathscr{E}$ interpreting the binary operation $\oplus$, an arrow $M\neg: M\rightarrow M$ in $\mathscr{E}$ interpreting the unary operation $\neg$ and a global element $M0:1\rightarrow M$ of $M$ in $\mathscr{E}$ (where $1$ is the terminal object of $\mathscr{E}$) interpreting the constant $0$. In the sequel, we shall omit the indication of the model $M$ in the notation for the operations and the constant.

An example of a model of the theory $\mathbb{MV}$ in a topos is the MV-algebra $([0,1],\oplus,\neg,0)$ in the topos $\mathbf{Set}$, where $x\oplus y=\inf(1,x+y)$ and $\neg x=1-x$ for any $x,y\in [0,1]$.

\subsection{Theory of abelian $\ell$-groups with strong unit $\mathbb{L}_u$}
\begin{definition}\cite{BK}
An abelian \textit{$\ell$-group with strong unit} is a structure $\mathcal{G}=(G,+,-,\leq ,\inf,\sup,0,u)$, where $(G,+,-,0)$ is an abelian group, $\leq$ is a partial order relation that induces a lattice structure and it is compatible with addition, i.e. it has the translation invariance property
\begin{center}
 $\forall x,y,t\in G$ \ $x\leq y\Rightarrow t+x\leq t+y$
\end{center}
and $u$ is an element of $G$ satisfying the property of being a strong unit, that is the conditions:
\begin{itemize}
\item[-] $u\geq 0$;
\item[-] for any $x\in G$, there is a natural number $n$ such that $x\leq nu$.
\end{itemize}
\end{definition}

The class of abelian $\ell$-groups with strong unit can be axiomatized as a geometric theory ${\mathbb L}_{u}$ (in the sense of section \ref{geom}). The signature $\Sigma_{\mathbb{L}_{u}}$ of ${\mathbb L}_{u}$ consists of one sort, four function symbols, one relation symbol and two constants:
\begin{itemize}
\item[]function symbols: $+$, $-$, $\sup$ and $\inf$
\item[]relation symbol: $\leq$
\item[]constants: $0$ and $u$
\end{itemize}
The axioms of $\mathbb{L}_{u}$ are:
\begin{enumerate}
\item $(\top\vdash_{x,y,z} x+(y+z)=(x+y)+z)$
\item $(\top\vdash_{x}x+0=x)$
\item $(\top\vdash_{x}x+(-x)=0)$
\item $(\top\vdash_{x,y}x+y=y+x)$
\item $(\top\vdash_{x}x\leq x)$
\item $((x\leq y)\wedge (y\leq x)\vdash_{x,y}x=y)$
\item $((x\leq y)\wedge (y\leq z)\vdash_{x,y,z}x\leq z)$
\item $(\top\vdash_{x,y} \inf(x,y)\leq x\wedge \inf(x,y)\leq y)$
\item $(z\leq x\wedge z\leq y\vdash_{x,y,z} z\leq \inf(x,y))$
\item $(\top\vdash_{x,y}(x\leq \sup(x,y) \wedge y\leq \sup(x,y)))$
\item $(x\leq z \wedge y\leq z\vdash_{x,y,z} \sup(x,y)\leq z)$
\item $(x\leq y\vdash_{x,y,t}t+x\leq t+y)$
\item $(\top\vdash u\geq 0)$
\item $(x\geq 0\vdash_{x} \bigvee_{n\in\mathbb{N}}x\leq nu)$
\end{enumerate}

Notice that the last axiom is provably equivalent to the sequent
\[
(\top \vdash_{x} \bigvee_{n\in\mathbb{N}}|x|\leq nu),
\] 
where the absolute value $|x|$ is the derived operation defined by $|x|=\sup(x, -x)$.

The Horn theory consisting of all the axioms above except for the last two is the theory of abelian $\ell$-groups with unit, and we denote it by the symbol $\mathbb L$. Of course, the theory $\mathbb L$ could be alternatively formalized as an algebraic theory, as the order relation $\leq$ can be defined for instance in terms of the operation $\inf$ (as $x\leq y$ if and only if $\inf(x, y)=x$).

A model of the theory ${\mathbb L}_{u}$ in a Grothendieck topos $\mathscr{E}$ consists of an object $N$ in $\mathscr{E}$, interpreting the unique sort of the signature of ${\mathbb L}_{u}$, and arrows in $\mathscr{E}$: $N+:N\times N\rightarrow N$, $N-:N\rightarrow N$, $N\sup:N\times N\rightarrow N$ and $N\inf :N\times N\rightarrow N$ interpreting the function symbols $+, -, \inf, \sup$, and a subobject $N\leq \rightarrowtail N\times N$ in $\mathscr{E}$ interpreting the order relation symbol $\leq$, plus global elements $N:1\rightarrow N$ and $Nu:1\rightarrow N$ interpreting the constants $0$ and $u$.

An example of a model of the theory $\mathbb{L}_{u}$ in a topos is the $\ell$-group with strong unit $(\mathbb{R},+,-,\leq,0,1)$ in $\mathbf{Set}$, where the operations $+,-$ and the relation $\leq$ are the usual ones.

\section{Generalization of Mundici's equivalence}\label{s:generalization}

In this section, after reviewing the classical Mundici's equivalence, we show how to generalize it to an arbitrary Grothendieck topos $\mathscr{E}$. First, we construct the functor $\Gamma_{\mathscr{E}}$ from the category of $\mathbb{L}_u$-models in $\mathscr{E}$ to the category of $\mathbb{MV}$-models in $\mathscr{E}$, and the functor $L_{\mathscr{E}}$ in the opposite direction. Then we show that they are categorical inverses to each other. The construction of these two functors being geometric, the equivalences between the categories of models in every Grothendieck topos will yield a Morita-equivalence between the theories.

\subsection{Mundici's equivalence in $\mathbf{Set}$}\label{s:mundici's equivalence}

In \cite{Mundici2} D. Mundici constructs a categorical equivalence between the category $\mathcal{MV}$ of MV-algebras and MV-homomorphisms between them and the category $\mathcal{L}_u$ of abelian $\ell$-groups with strong unit and unital abelian $\ell$-group homomorphisms between them. In terms of the theories $\mathbb{MV}$ and ${\mathbb L}_{u}$ defined in the last section, this is an equivalence between the categories $\mathbb{MV}$-mod$(\mathbf{Set})$ and $\mathbb{L}_u$-mod$(\mathbf{Set})$. 

Before generalizing this equivalence to an arbitrary Grothendieck topos, we first review the classical construction of the two functors defining it (cf. \cite{CDM} as a reference).

Let $\mathcal{G}=(G,+,-,\leq ,\inf,\sup,0,u)$ be an $\ell$-group with strong unit. We set 
\begin{center}
$\Gamma({\cal G}):=[0,u]=\{x\in G . 0\leq x\leq u\}$
\end{center}
and, for each $x,y\in [0,u]$,

\begin{center}
$x\oplus y:=\inf(u, (x+y)),$ \ \ \ \ \ $\neg x:= u-x$
\end{center} 
The structure $\Gamma(\mathcal{G}):=([0,u],\oplus,\neg,0)$ is the MV-algebra associated to the $\ell$-group $\mathcal{G}$.	

Any homomorphism $h$ of abelian $\ell$-groups with strong unit preservers the unit interval and, the operations $\oplus$ and $\neg$ being defined in term of the operations of $\mathcal{G}$, $h$ preserves them. Hence, the restriction of $h$ to the unit interval is an MV-algebra homomorphism and we can set $\Gamma(h):=h|_{[0,u]}$. Thus, we have a functor $\Gamma: \mathcal{L}_u\rightarrow \mathcal{MV}$.

In the converse direction, let $\mathcal{A}=(A,\oplus,\neg,0)$ be an MV-algebra. A sequence $\mathbf{a}=(a_{1},a_{2},...,a_n,...)$ of elements of $A$ is said to be \textit{good} if and only if $a_{i}\oplus a_{i+1}=a_{i}$, for each $i\in\mathbb{N}$, and there is a natural number $n$ such that $a_{r}=0$ for any $r\gt n$. For any pair of good sequences $\mathbf{a}$ and $\mathbf{b}$, one defines their sum $\mathbf{a}+\mathbf{b}$ as the sequence $\mathbf{c}$ whose components are
\begin{center}
 $c_{i}=a_{i}\oplus (a_{i-1}\odot b_{1})\oplus...\oplus (a_{1}\odot b_{i-1})\oplus b_{i}$.
\end{center}

Let $M_{A}$ be the set of good sequences of $\mathcal{A}$. The structure $M_{\mathcal{A}}:=(M_A,+,(0))$, where $(0)$ is the good sequence $(0, 0, ..., 0, ...)$, is an abelian monoid. The natural order in $\mathcal{A}$ induces a partial order relation $\leq$ in this monoid, given by: 
\begin{center}
$\mathbf{a}\leq \mathbf{b}$ if and only if $a_{i}\leq b_{i}$, for every $i\in \mathbb{N}$
\end{center}
Mundici proves, by using Chang's Subdirect Representation theorem (Theorem 1.3.3 \cite{CDM}), that this order admits inf and sup (for any pair of good sequences), which are given by:
\begin{center}
$\inf(\mathbf{a},\mathbf{b})=(\inf(a_1, b_1),...,\inf(a_n, b_n),...)$

\

$\sup(\mathbf{a},\mathbf{b})=(\sup(a_1, b_1),...,\sup(a_n, b_n),...)$
\end{center}

From the lattice abelian monoid $M_{\mathcal{A}}$ one can build an $\ell$-group ${\cal G}_{\mathcal{A}}$, by adding formal inverses to the elements of $M_{\mathcal{A}}$ (mimicking the construction of $\mathbb{Z}$ from $\mathbb{N}$). The elements of this $\ell$-group are equivalence classes $[x, y]$ of pairs of elements $x, y$ of the monoid. The constant $u=[(1),(0)]$, where $(1)=(\neg 0, 0, ..., 0, ...)$ is a strong unit for the group.

This construction is clearly functorial from the category $\mathcal{MV}$ to the category $\mathcal{L}_u$. The resulting functor $L:\mathcal{MV} \to \mathcal{L}_u$ is proved to be a categorical inverse to the functor $\Gamma$.

\subsection{From models of $\mathbb{L}_{u}$  to models of $\mathbb{MV}$}\label{interpretation}
Let $\mathscr{E}$ be a topos and $\mathcal{G}=(G,+,-,\leq ,\inf, \sup, 0, u)$ a model of $\mathbb{L}_{u}$ in $\mathscr{E}$. 

Mundici's construction of the functor $\Gamma:\mathbb{L}_u$-mod$(\mathbf{set})\rightarrow \mathbb{MV}$-mod$(\mathbf{Set})$ can be immediately generalized to any topos by using the internal language. Specifically, we define the interval $[0,u]$, where $u$ is the strong unit, as the subobject of $G$ 
\[
[0,u]:=[[x\in G. 0\leq x\leq u]],
\]
where the expression `$0\leq x\leq u$' is an abbreviation for the formula $(0\leq x) \wedge (x\leq u)$. 

We can define arrows 
\begin{center}
$\oplus: [0,u]\times [0,u]\rightarrow [0,u]$ 
 
 \
 
$\neg:[0,u]\rightarrow [0,u]$
\end{center}
in $\mathscr{E}$ again by using the internal language, as follows:

\begin{center}
$x \oplus  y=\inf(  u, x  + y),$ 

\

$\neg x= u  -x$.
\end{center}

The structure $\Gamma_{\mathscr{E}}(\mathcal{G}):=([0,u],\oplus, \neg, 0)$ is a model of the $\mathbb{MV}$ in $\mathscr{E}$ (c.f. Corollary \ref{models of MV}). In fact, the definition of the structure $\Gamma(\mathcal{G})$ with a first-order formula suggests that the theory $\mathbb{MV}$ is interpretable in $\mathbb{L}_u$. 

\begin{definition}
Let $\mathbb{T}$ and $\mathbb{S}$ be geometric theories. An \textit{interpretation} of $\mathbb{T}$ in $\mathbb{S}$ is a geometric functor $I:\mathscr{C}_{\mathbb{T}}\rightarrow \mathscr{C}_{\mathbb{S}}$ between their geometric syntactic categories. A \textit{bi-interpretation} is a (geometric) equivalence of categories. We say that $\mathbb{T}$ is \textit{interpretable} (resp. \textit{bi-interpretable}) in $\mathbb{S}$ if there exists an interpretation (resp. a bi-interpretation) of ${\mathbb T}$ in $\mathbb S$.
\end{definition} 

We recall from section D1.4 \cite{El} that for any geometric theory $\mathbb T$ and geometric category $\mathscr{C}$, we have a categorical equivalence 
\[
\mathbf{Hom}_{geom}(\mathscr{C}_{\mathbb{T}},\mathscr{C})\simeq \mathbb{T}\textrm{-mod}(\mathscr{C}),
\]
natural in $\mathscr{C}$, one half of which sends any model $M$ of $\mathbb{T}$ in $\mathscr{C}$ to the geometric functor $F_{M}:\mathscr{C}_{\mathbb{T}}\rightarrow\mathscr{C}$ assigning to any object $\{\vec{x}. \phi\}$ of the syntactic category $\mathscr{C}_{\mathbb{T}}$ its interpretation $[[\vec{x}. \phi]]_M$ in $\mathscr{C}$. 

Under this  equivalence, an interpretation $I$ of a theory $\mathbb{T}$ in a theory $\mathbb{S}$ corresponds to a model of $\mathbb{T}$ in the category $\mathscr{C}_{\mathbb{S}}$.

For any geometric category $\mathscr{C}$, an interpretation $I$ of ${\mathbb T}$ in $\mathbb S$ induces a functor
\[
s^{\mathscr{C}}_I:\mathbb{T}\textrm{-mod}(\mathscr{C}) \to \mathbb{S}\textrm{-mod}(\mathscr{C}) 
\] 
defined by the following commutative diagram:
\begin{center}
\begin{tikzpicture}
\node (1) at (0,0) {$\mathbf{Hom}_{geom}(\mathscr{C}_{\mathbb{T}},\mathscr{C})$};
\node (2) at (1.75,0) {$\simeq$};
\node (3) at (3,0) {$\mathbb{T}$-mod$(\mathscr{C})$};
\node (4) at (0,-3) {$\mathbf{Hom}_{geom}(\mathscr{C}_{\mathbb{S}},\mathscr{C})$};
\node (5) at (1.75,-3) {$\simeq$};
\node (6) at (3,-3) {$\mathbb{S}$-mod$(\mathscr{C})$};
\draw[<-] (4) to node [midway,left] {$-\circ I$} (1);
\draw[<-] (6) to node [midway,right] {$s^{\mathscr{C}}_I$} (3);
\end{tikzpicture}
\end{center}

\begin{theorem}\label{thm:interpretazione} 
The theory $\mathbb{MV}$ is interpretable in the theory $\mathbb{L}_u$, but not bi-interpretable.
\end{theorem}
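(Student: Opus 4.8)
For interpretability, I would exhibit an explicit interpretation $I \colon \mathscr{C}_{\mathbb{MV}} \to \mathscr{C}_{\mathbb{L}_u}$, or equivalently (using the equivalence $\mathbf{Hom}_{geom}(\mathscr{C}_{\mathbb{MV}}, \mathscr{C}_{\mathbb{L}_u}) \simeq \mathbb{MV}\textrm{-mod}(\mathscr{C}_{\mathbb{L}_u})$ recalled above) a model of $\mathbb{MV}$ in the syntactic category $\mathscr{C}_{\mathbb{L}_u}$. The construction is already laid out in the preceding subsection: take the object $\{x. \, 0 \leq x \leq u\}$ of $\mathscr{C}_{\mathbb{L}_u}$ as the carrier, and interpret the MV-operations by the $\mathbb{L}_u$-definable terms $x \oplus y = \inf(u, x+y)$, $\neg x = u - x$, and the constant $0$. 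The only genuine verification is that these definitions satisfy the six MV-axioms \emph{provably in} $\mathbb{L}_u$, i.e. that the corresponding sequents hold in the syntactic category. Rather than checking each axiom by a syntactic geometric derivation, I would invoke soundness/completeness: it suffices to verify that the formulas hold in every set-based model of $\mathbb{L}_u$, which is precisely the classical fact (from Mundici's construction, reviewed above) that $\Gamma(\mathcal{G})$ is an MV-algebra for every $\ell$-group with strong unit $\mathcal{G}$. Since the MV-axioms are algebraic (Horn, in fact equational), their validity in all set-based models gives provability in the geometric theory $\mathbb{L}_u$, yielding the interpretation $I$ with $s^{\mathscr{C}}_I$ recovering $\Gamma_{\mathscr{E}}$.

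For the \emph{non}-bi-interpretability, the strategy is to find a topos-invariant property that would be preserved by a bi-interpretation but which distinguishes the two theories. A bi-interpretation is an equivalence $\mathscr{C}_{\mathbb{MV}} \simeq \mathscr{C}_{\mathbb{L}_u}$ of geometric syntactic categories; any property of these categories that is invariant under equivalence must therefore agree. The most natural invariant to exploit is \emph{finitariness}: $\mathbb{MV}$ is an algebraic (finitary) theory, so $\mathscr{C}_{\mathbb{MV}}$ is a cartesian (finite-limit) syntactic category in the sense that all its objects are built from finitary data and, in particular, the classifying topos is the topos of sheaves on a \emph{finitary} syntactic site. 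By contrast $\mathbb{L}_u$ is genuinely infinitary: its last axiom is the infinite disjunction $\bigvee_{n \in \mathbb{N}} x \leq nu$, which cannot be eliminated. A syntactic equivalence $\mathscr{C}_{\mathbb{MV}} \simeq \mathscr{C}_{\mathbb{L}_u}$ would force $\mathbb{L}_u$ to be, up to equivalence of syntactic categories, a finitary (coherent or even cartesian) theory, and I would derive a contradiction from this.

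The concrete way to make the contradiction bite is to examine a suitable object of $\mathscr{C}_{\mathbb{L}_u}$ and show it cannot correspond to any object of $\mathscr{C}_{\mathbb{MV}}$. The cleanest candidate is the object $\{x. \, \top\}$ representing the full group sort: under any bi-interpretation it would be sent to some object $\{\vec{y}.\, \psi\}$ of $\mathscr{C}_{\mathbb{MV}}$, and since $\mathbb{MV}$ is finitary, $\psi$ is (provably equivalent to) a finitary coherent formula. One then shows that the group sort of $\mathbb{L}_u$, whose elements satisfy the \emph{infinitary} boundedness axiom $\bigvee_n |x| \leq nu$, cannot be captured by any such finitary formula — for instance by exhibiting a filtered system of models whose colimit behaviour differs, or by comparing the finitely presentable objects: the finitely presentable $\mathbb{MV}$-models correspond under $\Gamma$ only to the finitely presented $\ell$-groups with \emph{strong} unit, a strictly smaller class than the finitely presentable $\mathbb{L}_u$-models, so the two categories $\textrm{f.p.}\,\mathbb{MV}\textrm{-mod}(\Set)$ and $\textrm{f.p.}\,\mathbb{L}_u\textrm{-mod}(\Set)$ — which are equivalent to $(\mathscr{C}_{\mathbb{MV}})\op$ and $(\mathscr{C}_{\mathbb{L}_u})\op$ restricted to the appropriate finitely presented objects — fail to match. \textbf{The main obstacle} I anticipate is precisely pinning down which syntactic invariant is both easy to compute on each side and demonstrably non-matching; the infinitariness of the boundedness axiom is clearly the culprit, but turning "$\mathbb{L}_u$ is essentially infinitary" into a rigorous statement about the syntactic category $\mathscr{C}_{\mathbb{L}_u}$ that no finitary theory can share requires care, and I expect the cleanest route is the finitely-presentable-models comparison rather than a direct syntactic argument.
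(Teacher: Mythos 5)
Your interpretability argument matches the paper's construction (same carrier $\{x.\, 0\leq x\leq u\}$, same definable operations); the only divergence is that you verify the six MV-axioms semantically and appeal to completeness, where the paper checks the sequents syntactically. Your completeness appeal needs one repair: $\mathbb{L}_u$ is infinitary geometric, so ``valid in all set-based models implies provable in $\mathbb{L}_u$'' is not available off the shelf at this point of the development. The fix is to observe that the six sequents are Horn sequents not involving the strong-unit axiom, hence the relevant provability is in the finitary Horn theory $\mathbb{L}$, for which classical completeness (plus Birkhoff's subdirect representation to reduce to totally ordered groups) does apply. With that caveat the first half is fine.

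The second half has a genuine gap. Neither of the invariants you propose does the job. First, ``finitariness'' is not visibly an invariant of the \emph{geometric} syntactic category: $\mathscr{C}_{\mathbb{MV}}$ is the geometric (not cartesian or algebraic) syntactic category of $\mathbb{MV}$, so it already contains all infinitary disjunctions of formulae over $\Sigma_{\mathbb{MV}}$, and it is not a finite-limit category in any sense that would clash with $\mathscr{C}_{\mathbb{L}_u}$. Second, and more decisively, your fallback invariant --- comparing the categories of finitely presentable models --- fails outright: these categories \emph{are} equivalent. Proposition \ref{profp} and Theorem \ref{synequiv} of the paper establish precisely that $\mathcal{MV}_{f.p.}\simeq \textrm{f.p.}\,{\mathbb{L}_u}\textrm{-mod}(\Set)$ (as they must be, both theories being of presheaf type and Morita-equivalent), so no contradiction can be extracted there. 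The invariant that actually works, and that the paper uses, is much more elementary: a bi-interpretation $J$ must send $\{x.\,\top\}$ of $\mathscr{C}_{\mathbb{L}_u}$ to \emph{some} formula-in-context $\{\vec{y}.\,\psi\}$ over $\Sigma_{\mathbb{MV}}$, and for any geometric $\psi$ (infinitary or not) the interpretation $[[\vec{y}.\,\psi]]_{\mathcal{M}}$ in a finite MV-algebra $\mathcal{M}$ is a subset of $\mathcal{M}^n$, hence finite. Since every nontrivial abelian $\ell$-group is torsion-free and therefore infinite, the induced functor $s^{\Set}_J$ sends every finite MV-algebra to the trivial group; as there are two non-isomorphic finite MV-algebras and $s^{\Set}_J$, being part of an equivalence, reflects isomorphisms, this is a contradiction. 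Your write-up does not contain this finiteness observation, and without it the non-bi-interpretability claim is not established.
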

\begin{proof}
As remarked above, to define an interpretation of $\mathbb{MV}$ in $\mathbb{L}_u$ it is equivalent to define a model of $\mathbb{MV}$ in the syntactic category $\mathscr{C}_{\mathbb{L}_u}$. Let us consider the object $A:=\{x. 0\leq x\leq u\}$ of $\mathscr{C}_{\mathbb{L}_u}$ and the following arrows in $\mathscr{C}_{\mathbb{L}_u}$:
\begin{itemize}
\item[-] $\oplus:=[x,y,z. z=\inf(u,x+y)]:A\times A\to A$;
\item[-] $\neg:=[x,z. z=u-x]:A\to A$;
\item[-] $0:=[x. x=0]:1\to A$
\end{itemize}
 
We have a $\Sigma_{\mathbb{MV}}$-structure in $\mathscr{C}_{\mathbb{L}_u}$, $\mathcal{A}=(A,\oplus,\neg,0)$.
The following sequents are provable in $\mathbb{L}_u$:
\begin{itemize}
\item[(i)]$(0\leq x,y,z\leq u\vdash_{x,y,z} x\oplus (y\oplus z)=(x\oplus y)\oplus z)$

\item[(ii)]$(0\leq x,y\leq u\vdash_{x, y} x\oplus y=y\oplus x)$

\item[(iii)]$(0\leq x\leq u\vdash_{x} x\oplus 0=0)$

\item[(iv)]$(0\leq x\leq u\vdash_{x} \neg\neg x=x)$

\item[(v)]$(0\leq x\leq u\vdash_{x} x\oplus \neg 0=\neg 0)$

\item[(vi)]$(0\leq x,y\leq u\vdash_{x, y} \neg(\neg x\oplus y)\oplus y=\neg(\neg y\oplus x)\oplus x)$
\end{itemize}

The proofs of these facts are straightforward. For instance, to prove sequent (ii), one observes that $x\oplus y=\inf (u,x+y)=\inf(u,y+x)=y\oplus x$, where the second equality follows from axiom 4 of the theory ${\mathbb L}_{u}$.

The validity of the axioms of the theory $\mathbb{MV}$ in the structure $\mathcal{A}$ is equivalent to provability of the sequents (i)-(vi) in the theory $\mathbb{L}_u$. Hence,  the structure $\mathcal{A}$ is a model of $\mathbb{MV}$ in $\mathscr{C}_{\mathbb{L}_u}$.

This proves that $\mathbb{MV}$ is interpretable in ${\mathbb L}_{u}$.

Suppose that there exists a bi-interpretation $J:\mathscr{C}_{\mathbb{L}_u}\rightarrow \mathscr{C}_{\mathbb{MV}}$.  
This induces a functor $s^{\Set}_J:\mathbb{MV}$-mod$(\mathbf{Set})\rightarrow \mathbb{L}_u$-mod$(\mathbf{Set})$ which is part of a categorical equivalence and which therefore reflects isomorphisms. 

Let $\mathcal{M}$ be an MV-algebra, $\mathcal{N}:=s^{\Set}_J(\mathcal{M})$ and $\{\vec{y}. \psi\}:=J(\{x. \top\})$. We have that $F_{\mathcal{N}}\simeq F_{\mathcal{M}}\circ J$, by the commutativity of the diagram preceding Theorem \ref{thm:interpretazione}
. Hence:
\begin{center}
$F_{\mathcal{N}}(\{x. \top\})\simeq F_{\mathcal{M}}(\{\vec{y}. \psi\})$

\

$[[x. \top]]_{\mathcal{N}}\simeq [[\vec{y}. \psi]]_{\mathcal{M}}$

\

$\mathcal{N}\simeq [[\vec{y}. \psi]]_{\mathcal{M}}$
\end{center}

If $\mathcal{M}$ is a finite MV-algebra then $[[\vec{y}. \psi]]_{\mathcal{M}}\subseteq \mathcal{M}^{n}$ (for some $n$) is finite as well; thus $\mathcal{N}$ is finite. By Corollary 1.2.13 \cite{BK}, every $\ell$-group is without torsion; hence, every non trivial $\ell$-group is infinite. It follows that $\mathcal{N}=s_J(\mathcal{M})$ is trivial for any finite MV-algebra $\mathcal{M}$. Since the functor $s^{\Set}_J$ reflects isomorphisms and there are two non-isomorphic finite MV-algebras, we have a contradiction.

\end{proof}

\begin{cor}\label{models of MV}
The structure $\Gamma_{\mathscr{E}}(\mathcal{G})$ is a model of $\mathbb{MV}$ in $\mathscr{E}$.
\end{cor}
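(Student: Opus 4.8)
The plan is to deduce the Corollary directly from Theorem \ref{thm:interpretazione}, by observing that $\Gamma_{\mathscr{E}}(\mathcal{G})$ is nothing other than the image of $\mathcal{G}$ under the functor induced by the interpretation $I$ of $\mathbb{MV}$ in $\mathbb{L}_u$ constructed there. Recall that $I:\mathscr{C}_{\mathbb{MV}}\to\mathscr{C}_{\mathbb{L}_u}$ is the geometric functor corresponding, under the equivalence $\mathbf{Hom}_{geom}(\mathscr{C}_{\mathbb{MV}},\mathscr{C}_{\mathbb{L}_u})\simeq\mathbb{MV}\textrm{-mod}(\mathscr{C}_{\mathbb{L}_u})$, to the $\mathbb{MV}$-model $\mathcal{A}=(A,\oplus,\neg,0)$ in $\mathscr{C}_{\mathbb{L}_u}$; in particular $I$ sends the sort $\{x.\top\}$ of $\mathscr{C}_{\mathbb{MV}}$ to $A=\{x.\,0\leq x\leq u\}$ and the operation symbols $\oplus,\neg,0$ to the formula-defined arrows $[x,y,z.\,z=\inf(u,x+y)]$, $[x,z.\,z=u-x]$, $[x.\,x=0]$ respectively.

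First I would pass to an arbitrary $\mathbb{L}_u$-model $\mathcal{G}$ in $\mathscr{E}$, which corresponds to a geometric functor $F_{\mathcal{G}}:\mathscr{C}_{\mathbb{L}_u}\to\mathscr{E}$ with $F_{\mathcal{G}}(\{\vec{x}.\phi\})=[[\vec{x}.\phi]]_{\mathcal{G}}$. Precomposition with $I$ then yields a geometric functor $F_{\mathcal{G}}\circ I:\mathscr{C}_{\mathbb{MV}}\to\mathscr{E}$, which, being geometric, is by the same equivalence a model of $\mathbb{MV}$ in $\mathscr{E}$. The crux of the argument is to identify this induced model with the hands-on structure $\Gamma_{\mathscr{E}}(\mathcal{G})$: unwinding the composite on the generators, $F_{\mathcal{G}}\circ I$ sends the $\mathbb{MV}$-sort to $F_{\mathcal{G}}(A)=[[x.\,0\leq x\leq u]]_{\mathcal{G}}=[0,u]$, and carries $\oplus,\neg,0$ to the interpretations in $\mathcal{G}$ of the graph formulas above, i.e. to the arrows $\inf(u,x+y)$, $u-x$ and $0$ on $[0,u]$. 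These are exactly the operations defined via the internal language in the construction of $\Gamma_{\mathscr{E}}(\mathcal{G})$, so $F_{\mathcal{G}}\circ I=\Gamma_{\mathscr{E}}(\mathcal{G})$ as $\Sigma_{\mathbb{MV}}$-structures, whence $\Gamma_{\mathscr{E}}(\mathcal{G})$ is a model of $\mathbb{MV}$ in $\mathscr{E}$.

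I expect the only real obstacle to be the bookkeeping in this identification step, namely verifying that $F_{\mathcal{G}}$ carries each graph-formula arrow of $\mathscr{C}_{\mathbb{L}_u}$ to precisely the corresponding internal-language operation on $[0,u]$; this is routine once one recalls that $F_{\mathcal{G}}$ interprets formulas as their interpretations in $\mathcal{G}$ and that geometric functors preserve the finite limits and images used to define these arrows and the subobject $[0,u]$. Equivalently, and as a fallback presentation bypassing the functorial language entirely, one can argue by soundness: the proof of Theorem \ref{thm:interpretazione} shows that the relativized sequents (i)--(vi) are provable in $\mathbb{L}_u$, hence valid in $\mathcal{G}$; and since the $\mathbb{MV}$-operations on $[0,u]$ are given by exactly the internal terms occurring in (i)--(vi), the validity of (i)--(vi) in $\mathcal{G}$ is precisely the validity of the six axioms of $\mathbb{MV}$ in $\Gamma_{\mathscr{E}}(\mathcal{G})$, so the latter is a model of $\mathbb{MV}$.
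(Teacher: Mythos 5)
Your proposal is correct and follows essentially the same route as the paper: the paper's proof also invokes Theorem \ref{thm:interpretazione} to get the interpretation $I$, observes that the induced functor $s_I^{\mathscr{E}}:\mathbb{L}_u\textrm{-mod}(\mathscr{E})\to\mathbb{MV}\textrm{-mod}(\mathscr{E})$ sends $\mathcal{G}$ to $\Gamma_{\mathscr{E}}(\mathcal{G})$ by definition of $I$, and concludes. You have merely spelled out in more detail the identification step (and the soundness fallback) that the paper leaves implicit.
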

\begin{proof}
By Theorem \ref{thm:interpretazione}, there is an interpretation  $I$ of $\mathbb{MV}$ in $\mathbb{L}_u$ and hence a functor $s_{I}^{\mathscr{E}}:\mathbb{L}_u$-mod$(\mathscr{E})\rightarrow \mathbb{MV}$-mod$(\mathscr{E})$. By definition of $I$, this functor sends any $\mathbb{L}_u$-model $\mathcal{G}$ to the structure $\Gamma(\mathcal{G})$. Hence $\Gamma(\mathcal{G})$ is a model of $\mathbb{MV}$ in $\mathscr{E}$.
\end{proof}

Let $h:\mathcal{G}\rightarrow \mathcal{G'}$ be a homomorphism between models of $\mathbb{L}_u$ in $\mathscr{E}$. Since $h$ preserves the unit and the order relation, it restricts to a morphism between the unit intervals $[0, u_{{\cal G}}]$ and $[0, u_{{\cal G'}}]$. This restriction is a MV-algebra homomorphism since $h$ clearly preserves the operations $\oplus$ and $\neg$ on these two algebras. Thus $\Gamma$ defines a functor from $\mathbb{L}_u$-mod$(\mathscr{E})$ to $\mathbb{MV}$-mod$(\mathscr{E})$.

\begin{obss}\label{remint}
\begin{enumerate}[(a)]
\item The interpretation functor $I$ defined above extends the assignment from MV-terms to $\ell$-group terms considered at p. 43 of \cite{CDM};

\item The functor $I$ sends every formula-in-context $\{\vec{x}. \phi\}$ in $\mathscr{C}_{\mathbb{MV}}$ to a formula in the same context $\vec{x}$ over the signature of ${\mathbb L}_{u}$. This can be proved by an easy induction on the structure of geometric formulae by using the fact that, by definition of $I$, for any formula-in-context $\phi(\vec{x})$ over the signature of $\mathbb{MV}$, the formula $I(\{\vec{x}. \phi\})$ is equal to the interpretation of the formula $\phi(\vec{x})$ in the internal MV-algebra $\mathcal{A}=(A,\oplus,\neg,0)$ in $\mathscr{C}_{\mathbb{L}_u}$ defined above. In particular, for any geometric sequent $\sigma=(\phi \vdash_{\vec{x}} \psi)$ over the signature of $\mathbb{MV}$ and any Grothendieck topos $\mathscr{E}$, the sequent $I(\sigma):=I(\{\vec{x}. \phi\}) \vdash_{\vec{x}} I(\{\vec{x}. \psi\})$ is valid in a $\ell$-group with unit $\cal G$ in $\mathscr{E}$ if and only if $\sigma$ is valid in the associated MV-algebra $[0, u_{\cal G}]$.
\end{enumerate}
\end{obss}

\subsection{From models of $\mathbb{MV}$ to models of $\mathbb{L}_{u}$}

More delicate is the generalization of the other functor of Mundici's equivalence which involves the concept of good sequence.

Let $\mathscr{E}$ be a Grothendieck topos, with its unique geometric morphism $\gamma_{\mathscr{E}}:\mathscr{E}\rightarrow \mathbf{Set}$ to the topos of sets. 

In $\mathbf{Set}$ the set of all sequences with values in a given set $A$ can be identified with the exponential $A^{\mathbb{N}}$ (where $\mathbb N$ is the set of natural numbers). This construction can be generalized to any topos; indeed, we can consider the object $A^{\gamma_{\mathscr{E}}^{*}(\mathbb{N})}$ in $\mathscr{E}$. As the functor $A^{-}: \mathscr{E}\rightarrow \mathscr{E}^{op}$ has a right adjoint, it preserves copro\-ducts. Therefore, since $\gamma_{\mathscr{E}}^{*}(\mathbb{N})=\bigsqcup_{n\in \mathbb{N}}\gamma_{\mathscr{E}}^{*}(1)$, the object $A^{\gamma_{\mathscr{E}}^{*}(\mathbb{N})}$ is isomorphic to $\prod_{n\in \mathbb{N}}A$, and $A^n \simeq A^{\gamma_{\mathscr{E}}^{*}(I_n)}$, where $I_n$ is the $n$-element set $\{1,...,n\}$. From this observation we see that the construction of the object of sequences $A^{\gamma_{\mathscr{E}}^{*}(\mathbb{N})}$ is not geometric; however, as we shall see below, the construction of the subobject of good sequences associated to an MV-algebra in a topos is geometric.

Let $\mathcal{A}=(A,\oplus, \neg, 0)$ be a model of $\mathbb{MV}$ in $\mathscr{E}$. We need to define the subobject of good sequences of $A^{\gamma^*_{\mathscr{E}}(\mathbb{N})}$.

We shall argue informally as we were working in the classical topos of sets, but all our constructions can be straightforwardly formalized in the internal language of the topos $\mathscr{E}$. 

\begin{definition}
We say that $\mathbf{a}=(a_{1},...a_{n})\in A^{n}$ is a \textit{$n$-good sequence} if
\begin{center}
$a_{i}\oplus a_{i+1}=a_{i}$, for any $i=1,...n-1$
\end{center}
\end{definition}

Let $s_n: S_n\rightarrow A^n$ be the subobject $\{\mathbf{a}\in A^n \mid \mathbf{a}$ is a $n$-good sequence of $A^n\}$ (for any $n\in\mathbb{N}$). 

Any $n$-good sequence can be completed to an infinite good sequence by setting all the other components equal to $0$. Anyway, $n$-good sequences for different natural numbers $n$ can give rise to the same infinite good sequence. Indeed, if $\mathbf{a}\in S_m$ and $\mathbf{b}\in S_n$, with $m\leq n$, are of the form $\mathbf{a}=(a_1,...,a_m)$ and $\mathbf{b}=(a_1,...,a_m,0,..,0)$ then the completed sequences coincide. 

This observation shows that we can realize the subobject of good sequences on $\cal A$ as a quotient of the coproduct $\bigsqcup_{n\in \mathbb{N}}S_{n}$ by a certain equivalence relation, which can be specified as follows (below we shall denote by $\chi_m: S_m\rightarrow \bigsqcup_{n\in \mathbb{N}}S_{n}$ the canonical coproduct injections). 

Consider, for each $m\leq n$, the arrow $\pi_{m,n}:A^{m}\rightarrow A^{n}$ which sends an $m$-sequence $\mathbf{a}$ to the $n$-sequence whose first $m$ components are those of $\mathbf{a}$ and the others are $0$. Notice that if $m=n$ then $\pi_{m,n}$ is the identity on $A^{m}$. As the image of a $m$-good sequence under $\pi_{m,n}$ is a $n$-good sequence, the arrows $\pi_{m,n}:A^m\rightarrow A^n$ restrict to the subobjects $s_m$ and $s_n$, giving rise to arrows: 

\begin{center}
 $\xi_{m,n}:S_{m}\rightarrow S_{n}$,
\end{center}
for each $m\leq n$.

Now we can define, by using geometric logic, a relation on the coproduct $\bigsqcup_{n\in \mathbb{N}}S_{n}$: for any $(\mathbf{a},\mathbf{b})\in \bigsqcup_{n\in \mathbb{N}}S_{n}\times \bigsqcup_{n\in \mathbb{N}}S_{n}$,
\begin{center}

$\mathbf{a}R\mathbf{b}:\Leftrightarrow \bigvee_{m\leq n} [(\exists \mathbf{a'}\in S_m)(\exists \mathbf{b'}\in S_n)(\chi_m(\mathbf{a'})=\mathbf{a}\wedge\chi_n(\mathbf{b'})=\mathbf{b}\wedge \xi_{m,n}(\mathbf{a'})=\mathbf{b'})]\bigvee_{n\leq m}[(\exists \mathbf{a'}\in S_m)(\exists \mathbf{b'}\in S_n)(\chi_m(\mathbf{a'})=\mathbf{a}\wedge \chi_n(\mathbf{b'})=\mathbf{b}\wedge \xi_{n,m}(\mathbf{b'})=\mathbf{a'})]$
\end{center}

It is immediate to check that this is an equivalence relation; in fact, $R$ can be characterized as the equivalence relation on the coproduct $\bigsqcup_{n\in \mathbb{N}}S_{n}$ generated by the family of arrows $\{\xi_{m,n} \mid m\leq n\}$. 

Let us now show how to realize the quotient $(\bigsqcup_{m\in\mathbb{N}}S_m)\slash R$, which is our candidate for the object of good sequences associated to the MV-algebra $\cal A$, as a subobject of the object $A^{\gamma^*_{\mathscr{E}}(\mathbb{N})}$ of `all sequences' on $A$.  

Let us define an arrow from $A^{\gamma^*_{\mathscr{E}}(I_m)}$ to $A^{\gamma^*_{\mathscr{E}}(\mathbb{N})}\cong A^{\gamma^*_{\mathscr{E}}(I_m)}\times A^{\gamma^*_{\mathscr{E}}(\mathbb{N}-I_m)}$ by setting the first component equal to the identity on $A^{\gamma^*_{\mathscr{E}}(I_m)}$ and the se\-cond equal to the composition of the unique arrow $A^{\gamma^*_{\mathscr{E}}(I_m)}\rightarrow 1$ with the arrow $0: 1\rightarrow A^{\gamma^*_{\mathscr{E}}(\mathbb{N}-I_m)}$, induced at each components by the zero arrow ${\cal A}0:1\to A$ of the MV-algebra $\mathcal{A}$. This arrow is clearly monic; by composing with $s_m:S_{m}\to A^{m}\cong A^{\gamma^*_{\mathscr{E}}(I_m)}$ we thus obtain a monomorphism $\nu_m:S_m\rightarrow A^{\gamma^*_{\mathscr{E}}(\mathbb{N})}$. These arrows determine, by the universal property of the coproduct, an arrow $\nu:\bigsqcup_{m\in\mathbb{N}} S_m\rightarrow A^{\gamma^*_{\mathscr{E}}(\mathbb{N})}$. This arrow clearly coequa\-lizes the two natural projections corresponding to the relation $R$; hence, we have a unique factorization $\nu\slash R:(\bigsqcup_{m\in\mathbb{N}}S_m)\slash R\rightarrowtail A^{\gamma^*_{\mathscr{E}}(\mathbb{N})}$ of $\nu$ through the quotient $(\bigsqcup_{m\in\mathbb{N}}S_m)\slash R$. This factorization is monic. Indeed, by using the internal language, if $[\mathbf{a}],[\mathbf{b}]\in (\bigsqcup_{m\in\mathbb{N}}S_m)_{/R}$ then there exist $m, n\in {\mathbb N}$, $\mathbf{a'}\in S_{m}$ and $\mathbf{b'}\in S_{n}$ such that $\mathbf{a}=\chi_{m}(\mathbf{a'})$ and $\mathbf{b}=\chi_{m}(\mathbf{b'})$; so $\nu(\mathbf{a})=\nu(\mathbf{b})$ if and only if $\nu_{m}(\mathbf{a'})=\nu_{n}(\mathbf{b'})$; but this clearly holds if and only if either $n\leq m$ and $\xi_{n,m}(\mathbf{b'})=\mathbf{a'}$ or $m\leq n$ and $\xi_{m,n}(\mathbf{a'})=\mathbf{b'}$, either of which implies that $\mathbf{a}R\mathbf{b}$ (i.e. $[\mathbf{a}]=[\mathbf{b}]$), as required.  

The subobject just defined admits natural descriptions in terms of internal language of the topos.

\begin{pro}\label{object good}
Let $\mathcal{A}=(A,\oplus, \neg, 0)$ be a model of $\mathbb{MV}$ in $\mathscr{E}$. Then the following monomorphisms to $A^{\gamma^*_{\mathscr{E}}(\mathbb{N})}$ are isomorphic:
\begin{itemize}
\item[(i)] $\nu\slash R:(\bigsqcup_{m\in\mathbb{N}}S_m)\slash R\rightarrowtail A^{\gamma^*_{\mathscr{E}}(\mathbb{N})}$;
\item[(ii)] $[[\mathbf{a}\in A^{\gamma_{\mathscr{E}}^*(\mathbb{N})}. \bigvee_{n\in {\mathbb N}} ((\exists \mathbf{a'}\in S_n)(\mathbf{a}=\chi_{m}(\mathbf{a'})))]]\rightarrowtail A^{\gamma^*_{\mathscr{E}}(\mathbb{N})}$.
\end{itemize}
We call the resulting subobject the \emph{subobject of good sequences} of the $\mathbb{MV}$-algebra $\cal A$, and denote it by the symbol $S_{\cal A}$. 
\end{pro}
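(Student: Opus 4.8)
The goal is to show that two monomorphisms into $A^{\gamma^*_{\mathscr{E}}(\mathbb{N})}$ coincide: the factored quotient map $\nu\slash R$ constructed above, and the subobject carved out by the geometric formula in (ii). Since both are subobjects of the same object, the plan is to show they have the same image, i.e.\ that they define the same subobject of $A^{\gamma^*_{\mathscr{E}}(\mathbb{N})}$. The cleanest route is to argue entirely in the internal language of $\mathscr{E}$, treating elements as if we were in $\mathbf{Set}$ (which is legitimate by the Kripke--Joyal semantics / the fact that geometric constructions are stable under inverse image), and to verify a mutual containment of the two subobjects as predicates on a generalized element $\mathbf{a}\in A^{\gamma^*_{\mathscr{E}}(\mathbb{N})}$.

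First I would unwind what membership in the image of $\nu\slash R$ means. By construction, $\nu$ factors through the quotient, and the preceding paragraph already established that $\nu\slash R$ is monic; so an element $\mathbf{a}\in A^{\gamma^*_{\mathscr{E}}(\mathbb{N})}$ lies in its image exactly when $\mathbf{a}=\nu_n(\mathbf{a'})$ for some $n\in\mathbb{N}$ and some $\mathbf{a'}\in S_n$. Tracing the definition of $\nu_n$, this says precisely that the first $n$ components of $\mathbf{a}$ form an $n$-good sequence $\mathbf{a'}$ and all remaining components are $0$ — which, writing $\chi_n$ for the composite monomorphism that pads with zeros (the notation used in (ii)), is literally the disjunct $(\exists\mathbf{a'}\in S_n)(\mathbf{a}=\chi_n(\mathbf{a'}))$. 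Second, I would observe that the formula in (ii) is exactly the join over all $n\in\mathbb{N}$ of these disjuncts, so the subobject it defines is the union $\bigvee_{n}\im(\nu_n)$ of the images of the $\nu_n$. Thus both subobjects are, by inspection, the join of the images of the maps $\nu_n$.

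The remaining point — and the step I expect to require the most care — is to justify that the image of the coproduct map $\nu$ passed to the quotient really does coincide with this internal join, rather than merely being contained in it. Concretely, I would verify that $\im(\nu\slash R)=\im(\nu)=\bigvee_{m\in\mathbb{N}}\im(\nu_m)$: the first equality holds because $\nu\slash R$ and $\nu$ have the same image (a quotient map is an epimorphism onto the quotient, so factoring through it does not shrink the image), and the second is the standard fact that the image of a map out of a coproduct is the join of the images of the coproduct injections postcomposed with the map, which holds in any Grothendieck topos since images commute with the unions computing $\bigsqcup_{m}S_m$. Care is needed here precisely because $A^{\gamma^*_{\mathscr{E}}(\mathbb{N})}$ is an infinite product and is not itself built geometrically (as the authors stressed), so one cannot manipulate it by naive colimit arguments; the saving grace is that each $\nu_m$ is already shown to be monic and that we only ever take a countable join of subobjects, which is available in a Grothendieck topos.

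Finally I would assemble these observations: both (i) and (ii) have been identified with $\bigvee_{n\in\mathbb{N}}\im(\nu_n)$ as subobjects of $A^{\gamma^*_{\mathscr{E}}(\mathbb{N})}$, hence they are isomorphic over $A^{\gamma^*_{\mathscr{E}}(\mathbb{N})}$, which is exactly the assertion. A subtlety worth flagging is a minor index clash in statement (ii), where the bound variable is quantified as $\mathbf{a'}\in S_n$ but the equation reads $\mathbf{a}=\chi_m(\mathbf{a'})$; I would read this as the evident typo for $\chi_n$, since the intended meaning is that $\mathbf{a}$ is the zero-padding of an $n$-good sequence, and the whole argument depends on matching the index of $S_n$ with the injection $\chi_n$.
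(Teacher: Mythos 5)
Your proposal is correct and follows essentially the same route as the paper: identify the subobject in (ii) with the union of the images of the $\nu_m$, identify that union with the image of $\nu$, and identify the latter with $\nu/R$ using that $\nu/R$ is monic and the quotient projection is epic. Your reading of $\chi_m$ in (ii) as a typo for $\chi_n$ is also the intended one.
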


\begin{proof}
The second subobject is, by the semantics of the internal language, the union of all the subobjects $\nu_m:S_m\rightarrow A^{\gamma^*_{\mathscr{E}}(\mathbb{N})}$. This union is clearly isomorphic to the image of the arrow $\nu$, which is isomorphic to $\nu\slash R$, as the latter arrow is monic and the canonical projection $(\bigsqcup_{m\in\mathbb{N}}S_m)\to  (\bigsqcup_{m\in\mathbb{N}}S_m)\slash R$ is epic. This proves the isomorphism between the first subobject and the second.
\end{proof} 

Notice that in the case $\mathscr{E}=\Set$, our definition of subobject of good sequences specializes to the classical one. 

Let us now proceed to define an abelian monoid structure on the object $S_{{\cal A}}$, by using the internal language of the topos $\mathscr{E}$.

Consider the term $\mathbf{a}\in A^{\gamma^*_{\mathscr{E}}(\mathbb{N})}$, and denote by $a_i$ the term $\mathbf{a}(\gamma^*_{\mathscr{E}}(\epsilon_i))$, where $\epsilon_i:1=\{\ast\}\rightarrow \mathbb{N}$ (for any $i\in \mathbb{N}$) is the function in $\mathbf{Set}$ defined by: $\epsilon_i(\ast):=i$ (the object $1$ is the terminal object in $\mathbf{Set}$). We can think of the $a_i$ as the \textit{components} of $\mathbf{a}$.

We set 
\begin{center}
$M_\mathcal{A}=(S_{{\cal A}},+,\leq,\sup, \inf,0)$,
\end{center}
where the operations and the relation are defined as follows (by using the internal language): for any $\mathbf{a},\mathbf{b}\in S_{{\cal A}}$, 
\begin{itemize}
\item  the sum $\mathbf{a}+\mathbf{b}$ is given by the sequence $\mathbf{c}\in S_{\cal A}$ whose components are $c_i:=a_i\oplus (a_{i-1}\odot b_i)\oplus ...\oplus (a_1\odot b_{i-1})\oplus b_i$;
\item $\sup(\mathbf{a},\mathbf{b})$, where $\sup(\mathbf{a},\mathbf{b})_i:=\sup(a_i,b_i)$;
\item $\inf(\mathbf{a},\mathbf{b})$, where $\inf(\mathbf{a},\mathbf{b})_i:=\inf(a_i,b_i)$;
\item $\mathbf{a}\leq \mathbf{b}$ if and only if $inf(\mathbf{a}, \mathbf{b})=\mathbf{a}$, equivalently if there exists $\mathbf{c}\in S_{{\cal A}}$ such that $\mathbf{a}+\mathbf{c}=\mathbf{b}$; 
\item $0=(0)$, i.e. $0_i=0$ for every $i\in \mathbb{N}$.
\end{itemize}

Mundici proves that this is an abelian lattice-ordered monoid in the case $\mathscr{E}$ equal to $\mathbf{Set}$. In fact, this is the case for an arbitrary $\mathscr{E}$.

\begin{pro}\label{monoide}
Let $\mathcal{A}$ be a model of $\mathbb{MV}$ in $\mathscr{E}$. Then $M_\mathcal{A}$ is a well-defined structure, i.e. all the operations are well-defined, and the axioms of the theory $\mathbb{L}_u$, except for the axiom 3, hold in $M_{\mathcal{A}}$. Furthermore, the structure $M_{\mathcal{A}}$ satisfies the cancellation property, i.e. if $\mathbf{a}+\mathbf{b}=\mathbf{a}+\mathbf{c}$ then $\mathbf{b}=\mathbf{c}$, for any $\mathbf{a},\mathbf{b},\mathbf{c}\in S_{\cal A}$.
\end{pro}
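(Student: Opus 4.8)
The plan is to reduce every assertion in the statement to a family of \emph{geometric sequents over the signature} $\Sigma_{\mathbb{MV}}$, to verify each such sequent in $\mathbf{Set}$ by appealing to Mundici's classical computations, and then to transfer it to an arbitrary topos $\mathscr{E}$ by completeness of geometric logic. Concretely, since $\mathbb{MV}$ is a finitary algebraic (hence coherent) theory, its classifying topos is coherent and, by Deligne's theorem, has enough points; therefore a geometric sequent over $\Sigma_{\mathbb{MV}}$ which is valid in every MV-algebra in $\mathbf{Set}$ is provable in $\mathbb{MV}$ and so valid in the internal MV-algebra $\mathcal{A}$ in any Grothendieck topos $\mathscr{E}$. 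The only axioms of $\mathbb{L}_u$ one can hope to verify are those expressible in the reduced signature $(+,\leq,\sup,\inf,0)$ carried by $M_{\mathcal{A}}$; axiom 3 is deliberately excluded because $M_{\mathcal{A}}$ has no formal inverses (it is only a monoid), while axioms 13 and 14 mention the unit $u$, which is not part of this structure.

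The first task is well-definedness. Here I would work level by level on the summands $S_n \rightarrowtail A^n$ of the coproduct presenting $S_{\mathcal{A}}$. The convolution formula $c_i = a_i \oplus (a_{i-1}\odot b_1)\oplus\cdots\oplus b_i$ sends $S_m\times S_n$ into $A^{m+n}$; the claim that it lands in the good sequences $S_{m+n}$ is, for each fixed pair $(m,n)$, a finite conjunction of equational (Horn) sequents in the components $a_i,b_j$, valid in every MV-algebra and hence in $\mathcal{A}$. The operations $\sup$ and $\inf$ are defined componentwise after embedding both arguments into a common length via the maps $\xi_{m,n}$, and the same transfer shows the results are again good. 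It then remains to check that these level-wise operations are compatible with the $\xi_{m,n}$, so that they factor through the quotient $(\bigsqcup_n S_n)/R = S_{\mathcal{A}}$; each compatibility is again a geometric sequent in the components, true in $\mathbf{Set}$ and hence everywhere. The equivalence of the two descriptions of $\leq$ (namely $\inf(\mathbf{a},\mathbf{b})=\mathbf{a}$ versus the existence of $\mathbf{c}$ with $\mathbf{a}+\mathbf{c}=\mathbf{b}$) is handled in the same way, noting that the second condition is a genuinely geometric (existential) formula.

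With the operations in place, each of the remaining axioms (associativity, commutativity, $\mathbf{a}+0=\mathbf{a}$, the reflexivity, antisymmetry and transitivity of $\leq$, the universal properties of $\inf$ and $\sup$, and translation invariance) together with the cancellation property becomes, once representatives of fixed lengths $l,m,n$ are chosen, a single geometric sequent over $\Sigma_{\mathbb{MV}}$ whose variables range over $A$. Mundici verifies precisely these identities for good sequences in $\mathbf{Set}$, so by the completeness argument above they hold in $\mathcal{A}$ in $\mathscr{E}$, and passing to the quotient $S_{\mathcal{A}}$ yields the axioms for $M_{\mathcal{A}}$ and the cancellation law.

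The main obstacle is not any single calculation but the transfer mechanism itself. Mundici's verification in $\mathbf{Set}$ relies on Chang's Subdirect Representation theorem, which reduces statements to MV-chains and is a thoroughly classical, non-constructive device; one therefore cannot simply rerun his argument inside the intuitionistic internal language of $\mathscr{E}$. The point to get right is that it is the \emph{truth} of the geometric sequents, and not Mundici's \emph{proofs} of them, that propagates from $\mathbf{Set}$ to $\mathscr{E}$: this is legitimate exactly because each assertion, after fixing lengths, is genuinely a geometric sequent over $\Sigma_{\mathbb{MV}}$, and because $S_n$ is cut out by a geometric formula while all the operations are given by geometric terms in the MV-structure. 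The remaining care is bookkeeping — ensuring that the reduction to finite-length representatives is uniform and that every operation descends correctly along $R$ — so that the infinitary object $S_{\mathcal{A}}$ is controlled entirely through the finitary data on the $S_n$.
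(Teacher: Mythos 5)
Your proposal is correct and follows essentially the same route as the paper: reduce each assertion, after fixing lengths, to sequents over $\Sigma_{\mathbb{MV}}$, verify them in $\mathbf{Set}$ via Chang's subdirect representation theorem, and transfer to an arbitrary topos by a completeness argument. The only (inessential) difference is that the paper invokes the classical completeness theorem for algebraic theories on Horn sequents, whereas you appeal to Deligne's theorem for the coherent classifying topos so as to also cover genuinely geometric (existential) sequents; your explicit point that it is the truth of the sequents, not Mundici's non-constructive proofs, that propagates is exactly the paper's implicit justification.
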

\begin{proof}

As shown in \cite{CDM}, all the required properties can be deduced from the validity of certain algebraic sequents written in the the signature of the theory $\mathbb{MV}$ for all MV-algebras in the given algebra $\cal A$. For instance, the associativity property can be deduced from the validity of the following sequent:
\[ 
\mathbf{a}\in S_n\wedge \mathbf{b}\in S_m\wedge \mathbf{c}\in S_k\vdash_{a_i,b_j,c_l}\bigwedge_{1\leq t\leq n+m+k} ((\mathbf{a}+\mathbf{b})+\mathbf{c})_t=(\mathbf{a}+(\mathbf{b}+\mathbf{c}))_t,
\]
where $((\mathbf{a}+\mathbf{b})+\mathbf{c})_t=d_t\oplus(d_{t-1}\odot c_1)\oplus...\oplus(d_1\odot c_{t-1})\oplus c_t,$

$(\mathbf{a}+(\mathbf{b}+\mathbf{c}))_t=a_t\oplus (a_{t-1}\odot f_1)\oplus...\oplus(a_1\odot f_{t-1})\oplus f_t$,

 $\mathbf{d}:=\mathbf{a}+\mathbf{b}$  \  \ \ $\mathbf{f}:=\mathbf{b}+\mathbf{c}$. 

These equalities can be easily verified to hold for all MV-chains (cf. \cite{CDM}) and can therefore be transferred to all MV-algebras thanks to Chang's subdirect representation theorem, if we assume our universe $\Set$ to satisfy the axiom of choice (which is necessary to prove the latter theorem). We thus deduce that these equalities are provable in the theory $\mathbb{MV}$ (by the classical completeness theorem for algebraic theories) and therefore valid in every $\mathbb{MV}$-algebra in a Grothendieck topos. 
\end{proof}

In order to make the given lattice-ordered abelian monoid into a lattice-ordered abelian group, we mimick the construction of $\mathbb Z$ from $\mathbb N$, as it is done in \cite{CDM}. Specifically, for any lattice-ordered abelian monoid $M$ satisfying the cancellation property in a topos $\mathscr{E}$, the corresponding lattice-ordered abelian group is obtained as the quotient of $M\times M$ by the equivalence relation $\sim$ defined, by using the internal language, as: $(a,b)\sim (c,d)$ if and only if $a+d=b+c$. The operations and the order on this structure are defined in the obvious well-known way, again by using the internal language. In particular, in the case of the $\ell$-group ${\cal G}_{\mathcal{A}}$ corresponding to the monoid ${\cal M}_{{\cal A}}$ associated to a MV-algebra $\cal A$, they are defined as follows: 

\begin{itemize}
\item \textit{addition}: $[\mathbf{a},\mathbf{b}]+[\mathbf{c},\mathbf{d}]:=[\mathbf{a}+\mathbf{c},\mathbf{b}+\mathbf{d}]$;
\item \textit{subtraction}: $-[\mathbf{a},\mathbf{b}]:=[\mathbf{b},\mathbf{a}]$;
\item $[\mathbf{a}, \mathbf{b}]\leq [\mathbf{c},\mathbf{d}]$ if and only if $\mathbf{a}+\mathbf{d}\leq \mathbf{c}+\mathbf{b}$, equivalently if and only if there exists $\mathbf{e}\in S_{\cal A}$ such that $[\mathbf{c},\mathbf{d}]-[\mathbf{a}, \mathbf{b}]=[\mathbf{e}, (0)]$;
\item $\sup([\mathbf{a},\mathbf{b}],[\mathbf{c},\mathbf{d}]):=[\sup(\mathbf{a}+\mathbf{d},\mathbf{c}+\mathbf{b}),\mathbf{b}+\mathbf{d}]$;
\item $\inf([\mathbf{a},\mathbf{b}],[\mathbf{c},\mathbf{d}]):=[\inf(\mathbf{a}+\mathbf{d},\mathbf{c}+\mathbf{b}),\mathbf{b}+\mathbf{d}]$;
\item \textit{zero element}: $[(0),(0)]\in G$, 
\end{itemize}
where the symbol $(0)$ indicates the sequence all whose components are zero.

Let us moreover define $u:=[(1),(0)]$, where the symbol $(1)$ indicates the sequence whose first component is $1$ and the others are $0$.

\begin{pro}
The structure ${\cal G}_{\mathcal{A}}$, equipped with the element $u:=[(1),(0)]$ as a unit, is a model of $\mathbb{L}_u$ in $\mathscr{E}$.
\end{pro}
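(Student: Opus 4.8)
The plan is to treat $\mathcal{G}_{\mathcal{A}}$ as the internal Grothendieck group-completion of the monoid $M_{\mathcal{A}}$ and to verify the axioms of $\mathbb{L}_u$ in three groups: the abelian $\ell$-group axioms $1$--$12$ together with the interpretation of the constant $u$, the positivity axiom $13$, and the strong-unit axiom $14$. By Proposition \ref{monoide} the structure $M_{\mathcal{A}}$ is a cancellative lattice-ordered abelian monoid in $\mathscr{E}$ (it satisfies every axiom of $\mathbb{L}_u$ except axiom $3$ and enjoys the cancellation property), and all of the constructions defining $\mathcal{G}_{\mathcal{A}}$ were given in the internal language, so the whole argument may be carried out internally as if in $\mathbf{Set}$.

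For axioms $1$--$12$ I would invoke the standard fact that the group-completion $(M\times M)/{\sim}$ of a cancellative lattice-ordered abelian monoid $M$ is an abelian $\ell$-group, a construction formulated entirely in the internal language and therefore transporting verbatim to an arbitrary topos. Concretely, cancellation guarantees that addition, subtraction, $\sup$, $\inf$ and the order on equivalence classes are independent of the chosen representatives; the completion supplies precisely the inverses missing from $M_{\mathcal{A}}$ (axiom $3$), since $[\mathbf{a},\mathbf{b}]+[\mathbf{b},\mathbf{a}]=[(0),(0)]$, while each remaining group, lattice and translation-invariance axiom reduces — exactly as in the proof of Proposition \ref{monoide} — to an identity already valid in $M_{\mathcal{A}}$, which in turn descends from $\mathbb{MV}$-provable sequents validated on all MV-chains through Chang's subdirect representation theorem. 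The constant $u:=[(1),(0)]$ requires nothing here beyond being a well-defined global element.

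Axiom $13$ is immediate: unwinding the definition of the order on $\mathcal{G}_{\mathcal{A}}$, the inequality $u\geq 0$ amounts to $[(0),(0)]\leq[(1),(0)]$, i.e. to $(0)\leq(1)$ in $M_{\mathcal{A}}$, which holds because componentwise it is the assertion $0\leq\neg 0$, valid in every MV-algebra as $\neg 0$ is the top element of the natural order.

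The crux — and the only genuinely infinitary axiom — is axiom $14$, and I expect it to be the main obstacle. Arguing internally, an element $x\geq 0$ of $\mathcal{G}_{\mathcal{A}}$ is, by the order definition, of the form $x=[\mathbf{e},(0)]$ for some good sequence $\mathbf{e}\in S_{\mathcal{A}}$, and for each $n$ one computes $nu=[n\cdot(1),(0)]$, where $n\cdot(1)$ is the good sequence whose first $n$ components equal $\neg 0$ and whose remaining components are $0$. Since the monoid order is componentwise, $x\leq nu$ is equivalent to $e_i\leq(n\cdot(1))_i$ for all $i$. The decisive step is to feed in the defining disjunction of the subobject of good sequences from Proposition \ref{object good}(ii): $\mathbf{e}$ lies, for some $n$, in the image of $\nu_n:S_n\to A^{\gamma^*_{\mathscr{E}}(\mathbb{N})}$, so its components vanish beyond position $n$. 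As every component satisfies $e_i\leq\neg 0$, we then obtain $e_i\leq(n\cdot(1))_i$ for this same $n$ — trivially for $i\leq n$, and because both sides are $0$ for $i>n$ — whence $x\leq nu$. This furnishes the witness demanded by $(x\geq 0\vdash_{x}\bigvee_{n\in\mathbb{N}}x\leq nu)$. The essential subtlety, which is exactly what makes the construction geometric, is that the existential disjunction $\bigvee_{n}$ of the strong-unit axiom is matched witness-for-witness with the disjunction $\bigvee_{n}$ defining good sequences; the main technical burden is the internal-language bookkeeping that legitimizes this matching and the computation of the components of $n\cdot(1)$.
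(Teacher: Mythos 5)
Your proposal is correct and follows essentially the same route as the paper: the abelian $\ell$-group axioms are discharged by the internal Grothendieck group-completion of the cancellative lattice-ordered monoid $M_{\mathcal{A}}$, the positivity of $u$ is immediate, and the strong-unit axiom is verified internally by writing a positive element as $[\mathbf{c},(0)]$, extracting from the defining disjunction of $S_{\mathcal{A}}$ a natural number $m$ with $\mathbf{c}\in S_{m}$, and bounding it by $mu=[1^{m},(0)]$. The only difference is that you spell out the componentwise comparison and the computation of $n\cdot(1)$ in more detail than the paper does.
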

\begin{proof}
We have already observed that ${\cal G}_{\mathcal{A}}$ is an $\ell$-group with unit. It remains to prove that $u$ is a strong unit in ${\cal G}_{\mathcal{A}}$. It is clear that $u\geq 0$. Reasoning with the internal language, if $0\leq [\mathbf{a},\mathbf{b}]\in G_{\cal A}$ then there exists $\mathbf{c}\in S_{\cal A}$ such that $[\mathbf{a},\mathbf{b}]$ is equal to $[\mathbf{c},(0)]$, thus there is a natural number $m$ such that $\mathbf{c}\in S_{m}$; then $m u=[1^m,(0)]\geq [\mathbf{c},(0)]$, where $1^m=(1,...,1, 0, 0, ..., 0,...)$ is the good sequence having the first $m$ components equal to $1$ and the others equal to $0$. 
\end{proof}

The assignment ${\cal A}\to {\cal G}_{\cal A}$ is clearly functorial; we thus obtain a functor

\begin{center}
$L_{\mathscr{E}}:\mathbb{MV}$-mod$(\mathscr{E})\rightarrow\mathbb{L}_u$-mod$(\mathscr{E})$
\end{center}
with $L_{\mathscr{E}}(\mathcal{A}):= \mathcal{G}_{\mathcal{A}}$ for any MV-algebra $\cal A$ in $\mathscr{E}$.

\subsection{Morita-equivalence}

In the previous sections we have built functors
\begin{center}
$L_{\mathscr{E}}:\mathbb{MV}$-mod$(\mathscr{E})\rightarrow\mathbb{L}_{u}$-mod$(\mathscr{E})$

\

$\Gamma_{\mathscr{E}}:\mathbb{L}_{u}$-mod$(\mathscr{E})\rightarrow\mathbb{MV}$-mod$(\mathscr{E})$
\end{center}
which generalize to an arbitrary topos $\mathscr{E}$ the classical functors forming Mundici's equivalence.

In the case $\mathscr{E}=\mathbf{Set}$ Mundici builds two natural transformations

\begin{itemize}
\item $\varphi:1_{\mathcal{MV}}\rightarrow \Gamma\circ L$, whose components are the arrows 
\begin{center}
$\varphi_{A}:A\rightarrow \Gamma\circ L(A)$

for any $a\in A$, \  $\varphi_{A}(a)=[(a),(0)]$,\ 
\end{center}
where $(a)$ denotes the sequence whose first component is $a$ and all the others $0$; 

\item $\psi:1_{\mathcal{L}_{u}}\rightarrow L\circ\Gamma$, whose components are the arrows 
\begin{center}
$\psi_{{\cal G}}:{\cal G}\rightarrow L\circ\Gamma({\cal G})$

for any $a\in G$,\ $\psi_{{\cal G}}(a)=[g(a^{+}), g(a^{-})]$
\end{center} 
where $a^{+}=\sup(a, 0)$, $a^{-}=\sup(-a, 0)$ and the expression $g(b)=(b_{1},...,b_{n},0,...)$, for an element $b\geq 0$ in $G$, denotes the unique good sequence with values $b_{i}$ in $\Gamma({\cal G})$ such that $b_{1}+...+b_{n}=b$.
\end{itemize}

\begin{pro}
For every $\mathcal{A}=(A,\oplus,\neg,0)\in \mathbb{MV}$-mod$(\mathscr{E})$, the arrows $\varphi_\mathcal{A}:a\in \mathcal{A}\rightarrow [(a),(0)]\in\Gamma_{\mathscr{E}}(\mathcal{G}_{\mathcal{A}})$ are isomorphisms natural in $\cal A$. In other words, they are the components of a natural isomorphism from the identity functor on $\mathbb{MV}$-mod$(\mathscr{E})$ to $\Gamma_{\mathscr{E}}\circ L_{\mathscr{E}}$.
\end{pro}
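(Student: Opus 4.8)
The plan is to follow the same transfer principle already used for Proposition \ref{monoide}: since $\varphi_\mathcal{A}$ is specified by a geometric formula in the internal language, it is defined in every topos $\mathscr{E}$, and each property I need (well-definedness, the homomorphism laws, bijectivity, naturality) reduces, after unwinding the construction of $\mathcal{G}_\mathcal{A}$, to the provability in $\mathbb{MV}$ of certain algebraic sequents over $\Sigma_{\mathbb{MV}}$. Such sequents hold in all MV-chains, hence in all set-based MV-algebras by Chang's subdirect representation theorem, and are therefore provable in $\mathbb{MV}$ by the completeness theorem for algebraic theories; consequently they are valid in $\mathcal{A}$ for any Grothendieck topos $\mathscr{E}$.

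First I would check well-definedness and the homomorphism property. The sequence $(a)$ is always good (since $a\oplus 0=a$), so $\varphi_\mathcal{A}(a)=[(a),(0)]$ is positive and bounded above by $u=[(1),(0)]$, hence lies in $\Gamma_{\mathscr{E}}(\mathcal{G}_\mathcal{A})=[0,u]$; preservation of $0$ is immediate. For $\oplus$ and $\neg$ I would compute the group operations on representatives: the componentwise sum formula gives $(a)+(b)=(a\oplus b,\,a\odot b,\,0,\ldots)$, and since $\oplus$ on $[0,u]$ is $x\oplus y=\inf(u,x+y)$, the facts $a\oplus b\leq\neg 0$ and $\inf(0,a\odot b)=0$ yield $\varphi_\mathcal{A}(a)\oplus\varphi_\mathcal{A}(b)=[(a\oplus b),(0)]=\varphi_\mathcal{A}(a\oplus b)$; likewise $\neg\varphi_\mathcal{A}(a)=u-[(a),(0)]=[(1),(a)]$, which equals $[(\neg a),(0)]$ because $a\oplus\neg a=\neg 0$ and $a\odot\neg a=0$ give $(a)+(\neg a)=(1)$. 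Each of these rests on an MV-identity, so it transfers as above.

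Next, bijectivity. As a topos is a balanced category, it suffices to prove $\varphi_\mathcal{A}$ monic and epic. Injectivity is immediate from the group-completion equivalence relation: $[(a),(0)]=[(b),(0)]$ forces $(a)=(b)$, hence $a=b$. For surjectivity I would argue in the internal language that an element of $[0,u]$ is some $[\mathbf{a},\mathbf{b}]$ with $0\leq[\mathbf{a},\mathbf{b}]\leq u$; positivity rewrites it as $[\mathbf{c},(0)]$ for a good sequence $\mathbf{c}$, and the upper bound $[\mathbf{c},(0)]\leq[(1),(0)]$ forces $\mathbf{c}\leq(1)$ in $M_\mathcal{A}$, so componentwise $c_i=0$ for all $i\geq 2$; thus $\mathbf{c}=(c_1)$ and $[\mathbf{a},\mathbf{b}]=\varphi_\mathcal{A}(c_1)$, as required.

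Finally, naturality is a direct computation: for an MV-homomorphism $f\colon\mathcal{A}\to\mathcal{B}$, the functor $L_{\mathscr{E}}$ sends $f$ to the map acting componentwise on good sequences and descending to equivalence classes, whence $\Gamma_{\mathscr{E}}(L_{\mathscr{E}}(f))(\varphi_\mathcal{A}(a))=[(f(a)),(0)]=\varphi_\mathcal{B}(f(a))$, giving the naturality square. I expect the main obstacle to be the surjectivity step: one must ensure that the positivity and upper-bound conditions genuinely force a good sequence to be concentrated in its first component, and that this deduction is carried out internally rather than by choosing representatives pointwise. I would therefore make it rigorous by recording ``$\mathbf{c}\leq(1)$ implies $c_i=0$ for $i\geq 2$'' as a geometric sequent provable in $\mathbb{MV}$, so that it too transfers from $\mathbf{Set}$ to an arbitrary $\mathscr{E}$.
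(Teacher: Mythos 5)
Your proposal is correct and follows essentially the same route as the paper's proof: show $\varphi_{\mathcal{A}}$ is a monic MV-homomorphism, use the characterization of the elements $[\mathbf{a},\mathbf{b}]$ with $[(0),(0)]\leq[\mathbf{a},\mathbf{b}]\leq[(1),(0)]$ as those of the form $[(c),(0)]$ to get that it is epic (hence an isomorphism, the topos being balanced), and verify naturality by direct computation. The paper leaves the homomorphism and naturality checks as ``immediate''; you supply the explicit good-sequence computations and the transfer-via-Chang argument that justifies them internally, which is a welcome elaboration rather than a different method.
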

\begin{proof}
Let us argue in the internal language of the topos $\mathscr{E}$. The arrow $\phi_{\cal A}$ is clearly a monic homomorphism of MV-algebras. By definition of the order $\leq$ on ${\cal G}_{\cal A}$, we have that $[(0),(0)]\leq [\mathbf{a},\mathbf{b}]\leq [(1),(0)]$ if and only if there exists $c\in A$ such that $[\mathbf{a},(0)]=[(c),(0)]$. Hence, $\varphi_{\mathcal{A}}$ is epic. It is immediate to verify that $\varphi_{\mathcal{A}}$ preserves $\oplus$ and $\neg$, and that for any homomorphism $h:{\cal A}\to {\cal B}$ of MV-algebras in $\mathscr{E}$, the following square commutes:
\begin{center}
\begin{tikzpicture}
\node (1) at (0,0) {$\mathcal{A}$};
\node (2) at (5,0) {$\mathcal{B}$};
\node (3) at (0,-3) {$\Gamma_{\mathscr{E}}(L_{\mathscr{E}}(\mathcal{A}))$};
\node (4) at (5,-3) {$\Gamma_{\mathscr{E}}(L_{\mathscr{E}}(\mathcal{B}))$};
\draw[->] (1) to node [midway, above] {$h$} (2);
\draw[->] (3) to node [midway,below] {$\Gamma_{\mathscr{E}}(L_{\mathscr{E}}(h))$} (4);
\draw[->] (1) to node [midway,left] {$\varphi_{\mathcal{A}}$} (3);
\draw[->] (2) to node[midway,right] {$\varphi_{\mathcal{B}}$} (4);
\end{tikzpicture}
\end{center}
Thus, the arrows $\varphi_{\mathcal{A}}$ yield a natural isomorphism $\varphi:1\rightarrow \Gamma_{\mathscr{E}}\circ L_{\mathscr{E}}$.
\end{proof}

\begin{pro}
For every $\mathcal{G}=(G,+,-,\inf,\sup,0,u)\in \mathbb{L}_u$-mod$(\mathscr{E})$, there is an isomorphism $\phi_\mathcal{G}:L(\Gamma(\mathcal{G}))\rightarrow \mathcal{G}$, natural in $\mathcal{G}$. In other words, the isomorphisms $\phi_\mathcal{G}$ are the components of a natural isomorphism from $L_{\mathscr{E}}\circ \Gamma_{\mathscr{E}}$ to the identity functor on $\mathbb{L}_u$-mod$(\mathscr{E})$.   
\end{pro}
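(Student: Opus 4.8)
The plan is to define $\phi_{\mathcal{G}}$ as the map induced by \emph{summation of components}, and to prove it is an isomorphism of $\mathbb{L}_u$-models by exhibiting the classical map $\psi_{\mathcal{G}}$ as its inverse. Set $\mathcal{A}:=\Gamma_{\mathscr{E}}(\mathcal{G})=[0,u]$, so that an element of $L_{\mathscr{E}}(\Gamma_{\mathscr{E}}(\mathcal{G}))=\mathcal{G}_{\mathcal{A}}$ is an equivalence class $[\mathbf{a},\mathbf{b}]$ of pairs of good sequences of $\mathcal{A}$, whose components lie in the subobject $[0,u]$ of $G$. First I would construct a summation arrow $\sigma:S_{\mathcal{A}}\to G$. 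Working with the coproduct presentation $S_{\mathcal{A}}=(\bigsqcup_m S_m)/R$, the finite sums $\mathbf{a'}\mapsto a_1+\cdots+a_m$ on each $S_m$ are compatible with the transition arrows $\xi_{m,n}$ (padding by $0$ leaves the sum unchanged), hence coequalize $R$ and descend to $\sigma$. Since $\mathbf{a}+\mathbf{d}=\mathbf{b}+\mathbf{c}$ forces $\sigma(\mathbf{a})+\sigma(\mathbf{d})=\sigma(\mathbf{b})+\sigma(\mathbf{c})$, the assignment $[\mathbf{a},\mathbf{b}]\mapsto \sigma(\mathbf{a})-\sigma(\mathbf{b})$ is a well-defined arrow $\phi_{\mathcal{G}}:L_{\mathscr{E}}(\Gamma_{\mathscr{E}}(\mathcal{G}))\to G$.

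Next I would verify that $\phi_{\mathcal{G}}$ is a homomorphism of $\mathbb{L}_u$-models. The crucial point is the additivity of $\sigma$, namely $\sigma(\mathbf{a}+\mathbf{b})=\sigma(\mathbf{a})+\sigma(\mathbf{b})$; via the coproduct presentation this reduces, for each fixed pair of lengths $m,n$, to a Horn sequent over $\Sigma_{\mathbb{L}_u}$ relating the components $s_i=a_i\oplus(a_{i-1}\odot b_1)\oplus\cdots\oplus b_i$ of $\mathbf{a}+\mathbf{b}$ to the group sums of $\mathbf{a}$ and $\mathbf{b}$. Exactly as in Proposition \ref{monoide}, such a sequent holds in all $\ell$-groups with unit in $\mathbf{Set}$ (this is part of Mundici's classical lemma), hence is provable in $\mathbb{L}$ and therefore valid internally in $\mathscr{E}$. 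Additivity yields preservation of $+$, and (through the Grothendieck-group description) of $-$ and $0$; the unit is preserved because $u_{L_{\mathscr{E}}(\Gamma_{\mathscr{E}}(\mathcal{G}))}=[(1),(0)]$ with $(1)=(\neg 0,0,\ldots)=(u,0,\ldots)$, so $\phi_{\mathcal{G}}([(1),(0)])=\sigma((u,0,\ldots))=u$. Preservation of $\leq$, $\inf$ and $\sup$ then follows once we know that $\sigma$ is an order-isomorphism of the lattice-ordered monoid $M_{\mathcal{A}}$ onto the positive cone $[[x.\,0\leq x]]$ of $\mathcal{G}$, since on an $\ell$-group the lattice operations are determined by the positive cone.

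The heart of the argument, and the step I expect to be the main obstacle, is proving that $\phi_{\mathcal{G}}$ is an isomorphism, equivalently that $\sigma$ is a monomorphism onto the positive cone. Injectivity is handled internally by recovering a good sequence from its sum via $a_i=\inf(u,\,\sigma(\mathbf{a})-(a_1+\cdots+a_{i-1}))$, which amounts to $g\circ\sigma=\mathrm{id}$. Surjectivity onto $[[x.\,0\leq x]]$ is where the generalization is genuinely delicate: in $\mathbf{Set}$ one picks, for $b\geq 0$, a natural number $n$ with $b\leq nu$ and forms the $n$-good sequence $g(b)=(\inf(b,u),\inf(\sup(b-u,0),u),\ldots)$ of sum $b$, but in an arbitrary topos no such $n$ exists globally. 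Instead I would invoke the strong-unit axiom in its geometric form, which says precisely that the subobjects $[[x.\,0\leq x\wedge x\leq nu]]$ (for $n\in\mathbb{N}$) cover the positive cone; on each of these the decomposition $g$ lands in $S_n$, and these local good sequences agree on overlaps through the arrows $\xi_{m,n}$, so they glue to an arrow $g:[[x.\,0\leq x]]\to S_{\mathcal{A}}$ with $\sigma\circ g=\mathrm{id}$ and $g\circ\sigma=\mathrm{id}$. This covering-and-gluing step, absent from the classical proof, is the only place where the infinitary disjunction of the last axiom of $\mathbb{L}_u$ is used. Passing to Grothendieck groups (every element of $\mathcal{G}$ is $a^{+}-a^{-}$ with $a^{\pm}\geq 0$, so $\mathcal{G}$ is the Grothendieck group of its positive cone) then upgrades the monoid isomorphism $\sigma$ to the group isomorphism $\phi_{\mathcal{G}}$, whose inverse is $\psi_{\mathcal{G}}(a)=[g(a^{+}),g(a^{-})]$.

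Finally, naturality in $\mathcal{G}$ is a routine diagram chase: for a unital $\ell$-homomorphism $h:\mathcal{G}\to\mathcal{G}'$ one has $h\circ\sigma=\sigma'\circ(\text{map induced by }\Gamma_{\mathscr{E}}(h)\text{ on good sequences})$, because $h$ preserves $+$, $0$, the order and the unit interval; this identity lifts along the two Grothendieck-group constructions to give $h\circ\phi_{\mathcal{G}}=\phi_{\mathcal{G}'}\circ L_{\mathscr{E}}(\Gamma_{\mathscr{E}}(h))$, so that the $\phi_{\mathcal{G}}$ are the components of a natural isomorphism $L_{\mathscr{E}}\circ\Gamma_{\mathscr{E}}\to 1$.
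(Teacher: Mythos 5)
Your proof is correct and follows essentially the same route as the paper's: the paper likewise defines the isomorphism via the summation map $f_{\mathcal{G}}(a_1,\ldots,a_n)=a_1+\cdots+a_n$ onto the positive cone $G^{+}$ with inverse $g$/$\psi_{\mathcal{G}}$, and establishes the homomorphism properties by expressing them as sequents over the signature of $\mathbb{L}$ that hold in all totally ordered abelian groups with unit and hence, by Birkhoff's subdirect representation theorem and the completeness theorem for cartesian theories, are provable in $\mathbb{L}$ and therefore valid internally in $\mathscr{E}$. Your explicit covering-and-gluing construction of $g$ from the infinitary strong-unit axiom spells out a step the paper dispatches with ``we can generalize without problems the definition of the assignment $g$,'' and your derivation of the preservation of $\inf$ and $\sup$ from the order-isomorphism property (rather than from further $\mathbb{L}$-provable sequents, as the paper does for properties (ii) and (iii)) is a harmless variant within the same overall strategy.
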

\begin{proof}

By using the internal language, we can generalize without problems the definition of the assignment $g$ (that is, Lemma 7.1.3 \cite{CDM}), and the proof that the map $\psi_{{\cal G}}$ it is injective and surjective. It remains to prove that $g$ is a homomorphism of abelian $\ell$-groups with unit. It will clearly be sufficient to prove that the inverse arrow

\begin{center}
$f_{\mathcal{G}}:(a_1,...a_n)\in M_{\Gamma(\mathcal{G})}\rightarrow a=a_1+...+a_n\in G^{+},$
\end{center}
where $G^{+}$ denotes the positive cone of $\cal G$, is a homomorphism, i.e. that the following properties hold:
\begin{itemize}
\item[(i)] $f_{\mathcal{G}}(\mathbf{a}+\mathbf{b})=f_{\mathcal{G}}(\mathbf{a})+f_{\mathcal{G}}(\mathbf{b})$;
\item[(ii)] $f_{\mathcal{G}}(\inf(\mathbf{a},\mathbf{b}))=\inf(f_{\mathcal{G}}(\mathbf{a}),f_{\mathcal{G}}(\mathbf{b}))$;
\item[(iii)] $f_{\mathcal{G}}(\sup(\mathbf{a},\mathbf{b}))=\sup(f_{\mathcal{G}}(\mathbf{a}),f_{\mathcal{G}}(\mathbf{b}))$;
\item[(iv)] $f_{\mathcal{G}}((u))=u$.
\end{itemize}
By definition of $f_{\mathcal{G}}$, property (iv) holds. Properties (i)-(iii) can be expressed in terms of the validity in the group $\cal G$ of certain sequents written in the signature of the theory $\mathbb{L}$ of $\ell$-groups with unit $u$. For instance, property (i) can be expressed by the sequent
\begin{itemize}
\item[]$(\mathbf{a}\in S_n\wedge \mathbf{b}\in S_m\bigwedge_{1\leq n}(0\leq a_i\leq u)\bigwedge_{1\leq j\leq m}(0\leq b_j\leq u)\vdash_{a_i,b_j}c_1+...+c_{n+m}=a_1+...+a_n+b_1+...+b_n),$
\end{itemize}
where $\mathbf{c}=(c_1,...,c_{n+m}):=\mathbf{a}+\mathbf{b}$.

Now, Mundici's proof of Lemma 7.1.5 \cite{CDM} shows that these sequents hold in any totally ordered abelian group with unit and hence, by Birkoff's classical result that every abelian $\ell$-group with unit is a subdirect product of totally ordered abelian groups, in every model of $\mathbb{L}$ in $\mathbf{Set}$, if one assumes the axiom of choice. The classical completeness theorem for cartesian theories thus allows us to conclude that these sequents are provable in $\mathbb L$ and hence valid in $\cal G$. 

Hence, $f_{\mathcal{G}}$ is a homomorphism, which induces  an isomorphism $\phi_{\mathcal{G}}:L_{\mathscr{E}}(\Gamma_{\mathscr{E}}(\mathcal{G}))\rightarrow\mathcal{G}$, for any $\mathcal{G}$. 

It is immediate to see that for any homomorphism $h:{\cal G}\to {\cal H}$ the square below commutes:
\begin{center}
\begin{tikzpicture}
\node (1) at (0,0) {$L_{\mathscr{E}}(\Gamma_{\mathscr{E}}(\mathcal{G}))$};
\node (2) at (6,0) {$L_{\mathscr{E}}(\Gamma_{\mathscr{E}}(\mathcal{H}))$};
\node (3) at (0,-3) {$\mathcal{G}$};
\node (4) at (6,-3) {$\mathcal{H}$};
\draw[->] (1) to node [midway, above] {$L_{\mathscr{E}}(\Gamma_{\mathscr{E}}(h))$} (2);
\draw[->] (3) to node [midway, below] {$h$} (4);
\draw[->] (1) to node [midway,left] {$\phi_{\mathcal{G}}$} (3);
\draw[->] (2) to node [midway,right] {$\phi_{\mathcal{H}}$} (4);
\end{tikzpicture}
\end{center}

We can thus conclude that the $\phi_{\mathcal{G}}$ are the components of a natural isomorphism from $L\circ \Gamma$ to the identity functor on $\mathbb{L}_u$-mod$(\mathscr{E})$, as required.
\end{proof}

We have built, for every Grothendieck topos $\mathscr{E}$, an equivalence of categories
\[
\mathbb{MV}\textrm{-mod}(\mathscr{E})\simeq \mathbb{L}_{u}\textrm{-mod}(\mathscr{F})
\]
given by functors

\[
L_{\mathscr{E}}:\mathbb{MV}\textrm{-mod}(\mathscr{E}) \to \mathbb{L}_{u}\textrm{-mod}(\mathscr{F})
\]
and
\[
\Gamma_{\mathscr{E}}:\mathbb{L}_{u}\textrm{-mod}(\mathscr{F} \to \mathbb{MV}\textrm{-mod}(\mathscr{E})
\]
generalizing the classical functors of Mundici's equivalence. 

To prove that the theories $\mathbb{MV}$ and ${\mathbb L}_{u}$ are Morita-equivalent, it remains to show that this equivalence is natural in $\mathscr{E}$, that is for any geometric morphism $f:\mathscr{F}\to \mathscr{E}$, the following diagrams commute:

\begin{center} 
\begin{tikzpicture}
\node (1) at (0,0) {$\mathbb{L}_{u}$-mod$(\mathscr{F})$};
\node (2) at (5,0) {$\mathbb{MV}$-mod$(\mathscr{F})$};
\node (3) at (0,2) {$\mathbb{L}_{u}$-mod$(\mathscr{E})$};
\node (4) at (5,2) {$\mathbb{MV}$-mod$(\mathscr{E})$};
\draw[->] (1) to node [below,midway] {$\Gamma_{\mathscr{F}}$} (2);
\draw[->] (3) to node [left, midway] {$f^{\ast}$} (1);
\draw[->] (4) to node [right, midway] {$f^*$} (2);
\draw[->] (3) to node [above,midway] {$\Gamma_{\mathscr{E}}$} (4);
\end{tikzpicture}
\end{center}

\begin{center} 
\begin{tikzpicture}
\node (1) at (0,0) {$\mathbb{MV}$-mod$(\mathscr{F})$};
\node (2) at (5,0) {$\mathbb{L}_{u}$-mod$(\mathscr{F})$};
\node (3) at (0,2) {$\mathbb{MV}$-mod$(\mathscr{E})$};
\node (4) at (5,2) {$\mathbb{L}_{u}$-mod$(\mathscr{E})$};
\draw[->] (1) to node [below,midway] {$L_{\mathscr{F}}$} (2);
\draw[->] (3) to node [left, midway] {$f^*$} (1);
\draw[->] (4) to node [right, midway] {$f^*$} (2);
\draw[->] (3) to node [above,midway] {$L_{\mathscr{E}}$} (4);
\end{tikzpicture}
\end{center}

The commutativity of these diagrams follows from the fact that all the constructions that we used to build the functors $\Gamma$ and $L$ are geometric (i.e., only involving finite limits and colimits) and hence preserved by the inverse image functors of geometric morphisms. 

We have therefore proved the following

\begin{theorem}\label{main}
The functors $L_{\mathscr{E}}$ and $\Gamma_{\mathscr{E}}$ defined above yield a Morita-equivalence between the theories $\mathbb{MV}$ and ${\mathbb L}_{u}$. In particular, $\mathbf{Set}[\mathbb{MV}]\simeq \mathbf{Set}[\mathbb{L}_{u}]$.
\end{theorem}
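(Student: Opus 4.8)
The plan is to invoke the standard characterization of Morita-equivalence recalled in section \ref{s:background}: two geometric theories are Morita-equivalent precisely when their classifying toposes are equivalent, and by the universal property of the classifying topos together with the two-dimensional Yoneda lemma this holds if and only if the pseudofunctors $\mathbb{MV}\textrm{-mod}(-)$ and $\mathbb{L}_u\textrm{-mod}(-)$ on the $2$-category of Grothendieck toposes are equivalent. Thus it suffices to exhibit, for every Grothendieck topos $\mathscr{E}$, an equivalence $\mathbb{MV}\textrm{-mod}(\mathscr{E})\simeq\mathbb{L}_u\textrm{-mod}(\mathscr{E})$ which is pseudonatural in $\mathscr{E}$.

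First I would collect what has already been established: the functors $\Gamma_{\mathscr{E}}$ and $L_{\mathscr{E}}$, together with the natural isomorphisms $\varphi\colon 1\Rightarrow\Gamma_{\mathscr{E}}\circ L_{\mathscr{E}}$ and $\phi\colon L_{\mathscr{E}}\circ\Gamma_{\mathscr{E}}\Rightarrow 1$ of the two preceding propositions, already show that $\Gamma_{\mathscr{E}}$ and $L_{\mathscr{E}}$ are mutually quasi-inverse equivalences for each fixed $\mathscr{E}$. Hence the only remaining point is pseudonaturality: for every geometric morphism $f\colon\mathscr{F}\to\mathscr{E}$ one must produce natural isomorphisms filling the two displayed squares, i.e. $f^*\circ\Gamma_{\mathscr{E}}\cong\Gamma_{\mathscr{F}}\circ f^*$ and $f^*\circ L_{\mathscr{E}}\cong L_{\mathscr{F}}\circ f^*$.

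The strategy for these commutativities is that both functors are built entirely out of geometric data, while the inverse image functor $f^*$ is geometric, hence preserves finite limits and all small colimits and, more generally, the interpretation of geometric formulae. For $\Gamma$ this is immediate: by Theorem \ref{thm:interpretazione} the algebra $\Gamma_{\mathscr{E}}(\mathcal{G})$ is obtained from $\mathcal{G}$ by interpreting the finite-limit formula $\{x.\,0\leq x\leq u\}$ and the geometric terms defining $\oplus$ and $\neg$, so $f^*$ commutes with $\Gamma$ up to the canonical comparison isomorphism. For $L$ the argument is the same in spirit but needs slightly more care, and this is where I expect the only genuine obstacle to lie.

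The delicate point for $L$ is that the ambient object of all sequences $A^{\gamma^*_{\mathscr{E}}(\mathbb{N})}$ is an exponential, and exponentials are \emph{not} preserved by inverse image functors, so one cannot argue directly through it. The resolution is already contained in Proposition \ref{object good}: the subobject of good sequences $S_{\mathcal{A}}$ is isomorphic to the quotient $(\bigsqcup_{m\in\mathbb{N}}S_m)/R$, where each $S_m$ is cut out of $A^m$ by the finite-limit condition defining an $m$-good sequence, the coproduct and the quotient by $R$ are colimits, and the monoid operations, the order, and the subsequent group completion ${\cal G}_{\mathcal{A}}$ are all defined by geometric formulae over these. Since $f^*$ preserves finite limits, coproducts and coequalizers, it carries this entire description to the corresponding one for $f^*\mathcal{A}$, yielding a canonical isomorphism $f^*(S_{\mathcal{A}})\cong S_{f^*\mathcal{A}}$ compatible with all the structure, and hence $f^*\circ L_{\mathscr{E}}\cong L_{\mathscr{F}}\circ f^*$. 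Once both squares are seen to commute, the pseudonatural equivalence of the two model pseudofunctors is established, and the criterion recalled at the outset delivers the Morita-equivalence together with the equivalence $\mathbf{Set}[\mathbb{MV}]\simeq\mathbf{Set}[\mathbb{L}_u]$ of classifying toposes.
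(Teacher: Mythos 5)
Your proposal is correct and follows essentially the same route as the paper: the equivalences for each fixed $\mathscr{E}$ are already given by the two preceding propositions, and the naturality squares commute because all the constructions involved in $\Gamma_{\mathscr{E}}$ and $L_{\mathscr{E}}$ are geometric and hence preserved by inverse image functors. Your explicit observation that one must route around the non-geometric exponential $A^{\gamma^*_{\mathscr{E}}(\mathbb{N})}$ via the colimit description of the object of good sequences is precisely the point of Proposition \ref{object good}, which the paper relies on implicitly here.
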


\begin{remarks}\label{rem2}
\begin{enumerate}[(a)]
\item We have observed in section \ref{interpretation} that the theories $\mathbb{MV}$ and $\mathbb{L}_{u}$ are not bi-interpretable (in the sense that the geometric syntactic categories $\mathscr{C}_{\mathbb{MV}}$ and $\mathscr{C}_{\mathbb{L}_{u}}$ are not equivalent). On the other hand, we have just proved that the $\infty$-pretopos completions $\mathbf{Set}[\mathbb{MV}]$ of $\mathscr{C}_{\mathbb{MV}}$ and $\mathbf{Set}[\mathbb{L}_{u}]$ of $\mathscr{C}_{\mathbb{L}_{u}}$ are equivalent (by Proposition D3.1.12 \cite{El}, the classifying topos $\Set[{\mathbb T}]$ of a geometric theory is equivalent to the $\infty$-pretopos completion of the geometric syntactic category $\mathscr{C}_{\mathbb T}\hookrightarrow \Set[{\mathbb T}]$ of $\mathbb T$). Now, the objects of the $\infty$-pretopos completion $\Set[{\mathbb T}]$ of the syntactic category $\mathscr{C}_{\mathbb T}$ of a geometric theory $\mathbb T$ are formal quotients of infinite coproducts of objects of $\mathscr{C}_{\mathbb T}$ by equivalence relations in $\Set[{\mathbb T}]$ (cf. the proof of Proposition D1.4.12(iii) \cite{El}). In our particular case, the object $G$ of $\mathbf{Set}[\mathbb{MV}]$ which corresponds to the object $\{x. \top\}$ of $\mathbf{Set}[\mathbb{L}_{u}]$ under the equivalence $\mathbf{Set}[\mathbb{MV}]\simeq \mathbf{Set}[\mathbb{L}_{u}]$ of Theorem \ref{main} can be described as follows. For any natural number $n\geq 1$, let $\phi_{n}(x_{1}, \ldots, x_{n})$ be the formula $\mathbin{\mathop{\textrm{ $\bigwedge$}}\limits_{i\in \{1, \ldots, n-1\}}} x_{i}\oplus x_{i+1}=x_{i}$ over $\Sigma_{\mathbb{MV}}$ asserting that $(x_{1}, \ldots, x_{n})$ is a $n$-good sequence, and let $R$ be the equivalence relation on the coproduct $\mathbin{\mathop{\textrm{ $\coprod$}}\limits_{n\geq 1}}\phi_{n}(x_{1}, \ldots, x_{n})$ defined in section \ref{interpretation}. Then $G$ is isomorphic to the formal quotient of the product $(\mathbin{\mathop{\textrm{ $\coprod$}}\limits_{n\geq 1}}\phi_{n}(x_{1}, \ldots, x_{n}))\slash R\times (\mathbin{\mathop{\textrm{ $\coprod$}}\limits_{n\geq 1}}\phi_{n}(x_{1}, \ldots, x_{n}))\slash R$ by the equivalence relation used for defining the Grothendieck group associated to a cancellative abelian monoid. From this representation of $G$, it is straightforward to derive an expression for $G$ as a formal quotient of an infinite coproduct of formulae in $\mathscr{C}_{\mathbb{MV}}$.  
      
\item We could have alternatively proved that the classifying toposes $\mathbf{Set}[\mathbb{MV}]$ and $\mathbf{Set}[\mathbb{L}_{u}]$ are equivalent by first showing that the theories $\mathbb{MV}$ and $\mathbb{L}_{u}$ are of presheaf type (i.e., classified by a presheaf topos) and then appealing to the classical Mundici's equivalence (the fact that the theory $\mathbb{MV}$ is classified by a presheaf topos is straightforward, it being algebraic, while the fact that $\mathbb{L}_{u}$ is of presheaf type can be proved by using the methods of \cite{OCPT}, cf. section 8.7 therein). Indeed, two theories of presheaf type are Morita-equivalent if and only if they have equivalent categories of set-based models (this immediately follows from the fact, established in \cite{OC}, that for any theory $\mathbb T$ of presheaf type, its classifying topos can be canonically represented as the category $[\textrm{f.p.} {\mathbb T}\textrm{-mod}(\Set), \Set]$ of set-valued functors on the category of finitely presentable $\mathbb T$-models).  
\end{enumerate}
\end{remarks}

\section{Applications}\label{s:applications}

\subsection{Sheaf-theoretic Mundici's equivalence} 

We have defined above, for every Grothendieck topos $\mathscr{E}$, a categorical equivalence between the category of models of ${\mathbb L}_{u}$ in $\mathscr{E}$ and the category of models of $\mathbb{MV}$ in $\mathscr{E}$, which is natural in $\mathscr{E}$. By specializing this result to toposes $\Sh(X)$ of sheaves on a topological space $X$, we shall obtain a sheaf-theoretic generalization of Mundici's equivalence.

The category of models of the theory $\mathbb{MV}$ in the topos $\Sh(X)$ is isomorphic to the category  
$\Sh_{\mathbb{MV}}(X)$ whose objects are the sheaves $F$ on $X$ endowed with an MV-algebra structure on each set $F(U)$ (for an open set $U$ of $X$) in such a way that the maps $F(i_{U, V}):F(U)\to F(V)$ corresponding to inclusions of open sets $i_{U, V}:V\subseteq U$ are MV-algebra homomorphisms, and whose arrows are the natural transformations between them which are pointwise MV-algebra homomorphisms. Indeed, the evaluation functors $ev_{U}:\Sh(X)\to \Set$ (for each open set $U$ of $X$) preserve finite limits and hence preserve and jointly reflect models of the theory $\mathbb{MV}$. 

The category of models of ${\mathbb L}_{u}$ in $\Sh(X)$ is isomorphic to the category $\Sh_{{\mathbb L}_{u}}(X)$ whose objects are the sheaves $F$ on $X$ endowed with an abelian $\ell$-group with unit structure on each set $F(U)$ (for an open set $U$ of $X$) in such a way that the maps $F(i_{U, V}):F(U)\to F(V)$ corresponding to inclusions of open sets $i_{U, V}:V\subseteq U$ are $\ell$-group unital homomorphisms and for each point $x$ of $X$ the canonically induced $\ell$-group structure on the stalks $F_{x}$ is an $\ell$-group with strong unit, and whose arrows are the natural transformations between them which are pointwise abelian $\ell$-group homomorphisms. Indeed, the stalk functors $(-)_{x}:\Sh(X)\to \Set$ (for each point $x$ of $X$) are geometric and jointly conservative, and hence they preserve and jointly reflect models of the theory ${\mathbb L}_{u}$ (cf. Corollary D1.2.14 \cite{El}).   

The two functors $\Gamma_{\Sh(X)}$ and $L_{\Sh(X)}$ defining the equivalence can be described as follows: $\Gamma_{\Sh(X)}$ sends any sheaf $F$ in $\Sh_{{\mathbb L}_{u}}(X)$ to the sheaf $\Gamma_{\Sh(X)}(F)$ on $X$ sending every open set $U$ of $X$ to the MV-algebra given by the unit interval in the $\ell$-group $F(U)$, and it acts on arrows in the obvious way. In the converse direction, $L_{\Sh(X)}$ assigns to any sheaf $G$ in $\Sh_{{\mathbb{MV}}}(X)$ the sheaf $L_{\Sh(X)}(G)$ on $X$ whose stalk at any point $x\in X$ is equal to the $\ell$-group corresponding via Mundici's equivalence to the MV-algebra $G_{x}$. 

The naturality in $\mathscr{E}$ of our Morita-equivalence implies in particular that the resulting equivalence
\[
\tau_{X}:\Sh_{\mathbb{MV}}(X)\simeq \Sh_{{\mathbb L}_{u}}(X)
\]
is natural in $X$ (recall that any continuous map $f:X\to Y$ induces a geometric morphism $\Sh(f):\Sh(X)\to \Sh(Y)$ such that $\Sh(f)^{\ast}$ is the inverse image functor on sheaves along $f$). In particular, by taking $X$ to be the one-point space, we obtain that, at the level of stalks, $\tau_{Y}$ acts as the classical Mundici's equivalence (indeed, the geometric morphism $\Set \to \Sh(X)$ corresponding to a point $x:1\to X$ of $X$ has as inverse image precisely the stalk functor at $x$).

Summarizing, we have the following result.

\begin{cor}
Let $X$ be a topological space. Then, with the above notation, we have a categorical equivalence
\[
\tau_{X}:\Sh_{\mathbb{MV}}(X)\simeq \Sh_{{\mathbb L}_{u}}(X)
\]
sending any sheaf $F$ in $\Sh_{{\mathbb L}_{u}}(X)$ to the sheaf $\Gamma_{\Sh(X)}(F)$ on $X$ sending every open set $U$ of $X$ to the MV-algebra given by the unit interval in the $l$-group $F(U)$, and any sheaf $G$ in $\Sh_{{\mathbb{MV}}}(X)$ to the sheaf $L_{\Sh(X)}(G)$ in $\Sh_{{\mathbb L}_{u}}(X)$ whose stalk at any point $x$ of $X$ is the $\ell$-group corresponding to the MV-algebra $G_{x}$ under Mundici's equivalence.

The equivalence $\tau_{X}$ is natural in $X$, in the sense that for any continuous map $f:X\to Y$ of topological spaces, the diagram
\[  
\xymatrix {
\Sh_{\mathbb{MV}}(Y)  \ar[d]^{j_{f}} \ar[r]^{\tau_{Y}} & \Sh_{{\mathbb L}_{u}}(Y) \ar[d]^{i_{f}} \\
\Sh_{\mathbb{MV}}(X) \ar[r]^{\tau_{X}} & \Sh_{{\mathbb L}_{u}}(X) } 
\]
commutes, where $i_{f}:\Sh_{\mathbb{MV}}(Y) \to \Sh_{\mathbb{MV}}(X)$ and $j_{f}:\Sh_{{\mathbb L}_{u}}(Y) \to \Sh_{{\mathbb L}_{u}}(X)$ are the inverse image functors on sheaves along $f$.

Moreover, $\tau_{X}$ acts, at the level of stalks, as the classical Mundici's equivalence. 
\end{cor}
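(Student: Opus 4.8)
The plan is to obtain the stated equivalence, together with both its naturality and its stalkwise behaviour, by specializing the Morita-equivalence of Theorem \ref{main} to the topos $\Sh(X)$ and then transporting everything along the two identifications of model categories with sheaf categories recalled just above. Concretely, Theorem \ref{main} provides, for $\mathscr{E}=\Sh(X)$, an equivalence $\mathbb{MV}\textrm{-mod}(\Sh(X))\simeq \mathbb{L}_u\textrm{-mod}(\Sh(X))$ implemented by $L_{\Sh(X)}$ and $\Gamma_{\Sh(X)}$; composing it with the isomorphisms of categories $\Sh_{\mathbb{MV}}(X)\cong \mathbb{MV}\textrm{-mod}(\Sh(X))$ and $\Sh_{\mathbb{L}_u}(X)\cong \mathbb{L}_u\textrm{-mod}(\Sh(X))$ yields $\tau_X$. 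Thus the only real work is to justify the two elementwise descriptions of the functors and the naturality statements; nothing new needs to be proved about the equivalence itself.

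For the description of $\Gamma_{\Sh(X)}$ I would argue open set by open set. Under the identification $\Sh_{\mathbb{MV}}(X)\cong \mathbb{MV}\textrm{-mod}(\Sh(X))$, a model $M$ corresponds to the sheaf $U\mapsto ev_U(M)$. Now $\Gamma$ is an \emph{interpretation} of $\mathbb{MV}$ in $\mathbb{L}_u$ (Theorem \ref{thm:interpretazione}), so its defining data — the subobject $[0,u]$ and the operations $\oplus$, $\neg$ — are given by cartesian (finite-limit) formulae. Since each evaluation functor $ev_U$ preserves finite limits, it commutes with this construction, whence $ev_U(\Gamma_{\Sh(X)}(F))\cong \Gamma(ev_U(F))=\Gamma(F(U))$, i.e. the unit interval of the $\ell$-group $F(U)$. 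This gives the open-set description.

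The functor $L$, by contrast, is built using genuine colimits (the coproduct $\bigsqcup_n S_n$, the quotient by $R$, and the Grothendieck-group quotient), which the $ev_U$ need not preserve; so here I would argue on stalks instead. A point $x$ of $X$ is the inverse image $(-)_x$ of a geometric morphism $\Set \to \Sh(X)$, and the $(-)_x$ are geometric and jointly conservative, so they realize $\Sh_{\mathbb{L}_u}(X)\cong \mathbb{L}_u\textrm{-mod}(\Sh(X))$. Applying the naturality-in-$\mathscr{E}$ clause of Theorem \ref{main} to this geometric morphism — equivalently, using that every construction entering $L$ is geometric and hence preserved by inverse images — gives $(L_{\Sh(X)}(G))_x\cong L((G)_x)=L(G_x)$, the $\ell$-group Mundici-associated to $G_x$. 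This simultaneously yields the stalk description and the final assertion that $\tau_X$ acts stalkwise as the classical equivalence (the one-point-space case being exactly $\Sh(X)=\Set$, where $\tau_X$ is literally classical Mundici).

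Finally, naturality in $X$ is just the naturality in $\mathscr{E}$ of Theorem \ref{main} specialized to the geometric morphism $\Sh(f):\Sh(X)\to\Sh(Y)$ attached to a continuous $f:X\to Y$, whose inverse image $\Sh(f)^{\ast}$ is precisely the restriction functor inducing $i_f$ and $j_f$ through the identifications. Thus the commuting square of the Corollary is the image, under these identifications, of the commuting square of Theorem \ref{main}; one only has to note that the identifications intertwine $\Sh(f)^{\ast}$ with $i_f$ and $j_f$, which holds because both $ev_U$ and $(-)_x$ commute with restriction along $f$. I expect the only genuinely delicate point to be the asymmetry just exploited — that $\Gamma$ admits an open-set description because it is cartesian while $L$ only admits a stalk description because it is properly geometric — and keeping careful track of which functors ($ev_U$ versus $(-)_x$) are strong enough to commute with each construction.
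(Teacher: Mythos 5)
Your proposal is correct and follows essentially the same route as the paper: specializing Theorem \ref{main} to $\mathscr{E}=\Sh(X)$, describing $\Gamma_{\Sh(X)}$ open-set-wise via the finite-limit-preserving evaluation functors $ev_U$, describing $L_{\Sh(X)}$ stalkwise via the jointly conservative geometric stalk functors $(-)_x$, and deriving naturality in $X$ and the stalkwise classical behaviour from naturality in $\mathscr{E}$ applied to $\Sh(f)$ and to the point geometric morphisms respectively. Your explicit remark on the asymmetry between the cartesian $\Gamma$ and the properly geometric $L$ is exactly the point the paper exploits implicitly.
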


\subsection{Correspondence between geometric extensions}

The Morita-equivalence between the theories $\mathbb{MV}$ and $\mathbb{L}_{u}$ established above allows us to transfer properties and results between the two theories according to the `bridge technique' of \cite{OC10}. More specifically, for any given topos-theoretic invariant $I$ one can attempt to build a `bridge' yielding a logical relationship between the two theories by using as `deck' the given equivalence of toposes and as `arches' appropriate site characterizations of $I$.  

For instance, we can use the invariant notion of subtopos  (recall that a subtopos of a given topos is an isomorphism class of geometric inclusions to that topos) to establish a relationship between the quotients (in the sense of section \ref{Morita}) of the two theories:

\[  
\xymatrix {
 & & \Sh(\mathscr{C}_{\mathbb{MV}}, J_{\mathbb{MV}}) \simeq \Sh(\mathscr{C}_{\mathbb{L}_{u}}, J_{\mathbb{L}_u}) \ar@/^12pt/@{--}[drr] & & \\
(\mathscr{C}_{\mathbb{MV}},  J_{\mathbb{MV}})  \ar@/^12pt/@{--}[urr] & & & & (\mathscr{C}_{\mathbb{L}_{u}}, J_{\mathbb{L}_u})}
\]

In fact, the duality theorem of \cite{OC6} (cf. section \ref{Morita}) provides the appropriate characterizations of the notion of subtopos in terms of the syntax of the two theories. This yields at once the following
  
\begin{theorem}\label{restrictions}
Every quotient of the theory $\mathbb{MV}$ is Morita-equivalent to a quotient of the theory $\mathbb{L}_u$, and conversely. These Morita-equivalences are the restrictions of the one between $\mathbb{MV}$ and $\mathbb{L}_u$ of Theorem \ref{main}. 
\end{theorem}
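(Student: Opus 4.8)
The plan is to invoke the duality theorem of \cite{OC6}, which establishes that for any geometric theory $\mathbb{T}$, the subtoposes of the classifying topos $\Set[\mathbb{T}]$ are in bijective correspondence with the quotients of $\mathbb{T}$ (where a quotient is, up to syntactic equivalence, a geometric theory over the same signature obtained by adding geometric sequents as axioms, presented as a Grothendieck topology $J$ finer than $J_{\mathbb{T}}$ on the syntactic site). The notion of subtopos is a topos-theoretic invariant in the precise sense required by the `bridge technique': it depends only on the classifying topos up to equivalence, not on any particular site presentation. This is the crux that makes the argument work.

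First I would recall that, by Theorem \ref{main}, the two theories $\mathbb{MV}$ and $\mathbb{L}_u$ share a common classifying topos, i.e. $\Set[\mathbb{MV}]\simeq \Set[\mathbb{L}_u]$; call this topos $\mathscr{E}$. Then I would observe that the assignment sending a quotient of a theory to the corresponding subtopos of its classifying topos is, by the duality theorem applied separately to each theory, a bijection between the poset of quotients of $\mathbb{MV}$ and the poset of subtoposes of $\Set[\mathbb{MV}]$, and likewise a bijection between the quotients of $\mathbb{L}_u$ and the subtoposes of $\Set[\mathbb{L}_u]$. Composing the first bijection with the equivalence $\Set[\mathbb{MV}]\simeq \Set[\mathbb{L}_u]$ (which induces an isomorphism of the posets of subtoposes, since subtoposes are preserved and reflected by equivalences of toposes) and then with the inverse of the second bijection yields the desired order-isomorphism between quotients of $\mathbb{MV}$ and quotients of $\mathbb{L}_u$. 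This establishes the bijective correspondence asserted in the first sentence.

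Next I would verify the second assertion, namely that these correspondences are realized as restrictions of the Morita-equivalence of Theorem \ref{main}. Here the key point is that a subtopos $\mathscr{F}\hookrightarrow\mathscr{E}$ is precisely the classifying topos of the quotient it corresponds to under either theory; so if a quotient $\mathbb{MV}'$ of $\mathbb{MV}$ and a quotient $\mathbb{L}_u'$ of $\mathbb{L}_u$ correspond to the same subtopos $\mathscr{F}$, then $\Set[\mathbb{MV}']\simeq\mathscr{F}\simeq\Set[\mathbb{L}_u']$, which is exactly a Morita-equivalence between $\mathbb{MV}'$ and $\mathbb{L}_u'$. Moreover this equivalence sits inside the one of Theorem \ref{main} as its restriction to the relevant subtoposes, since the geometric inclusion $\mathscr{F}\hookrightarrow\mathscr{E}$ is compatible on both sides with the ambient equivalence $\Set[\mathbb{MV}]\simeq\Set[\mathbb{L}_u]$.

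The main obstacle will be stating precisely what `restriction of the Morita-equivalence' means and checking that the equivalence between the two quotient classifying toposes is genuinely the one induced by the inclusion into the common topos $\mathscr{E}$, rather than merely \emph{some} equivalence between abstractly-equivalent subtoposes. This amounts to confirming that the bijection on subtoposes induced by $\Set[\mathbb{MV}]\simeq\Set[\mathbb{L}_u]$ carries the subtopos associated with $\mathbb{MV}'$ to the subtopos associated with the corresponding $\mathbb{L}_u'$, and that the syntactic site characterizations supplied by \cite{OC6} serve as the two `arches' of the bridge over the common `deck' $\mathscr{E}$. Once the invariance of the notion of subtopos under the equivalence is made explicit, the remaining verifications are formal, so I expect the proof itself to be short, with the conceptual weight resting entirely on the duality theorem and on the fact that $\mathbb{MV}$ and $\mathbb{L}_u$ have, by Theorem \ref{main}, literally the same classifying topos.
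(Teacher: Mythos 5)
Your proposal is correct and follows essentially the same route as the paper: the authors likewise use the duality theorem of \cite{OC6} as the two `arches' identifying quotients of each theory with subtoposes of the common classifying topos $\Set[\mathbb{MV}]\simeq\Set[\mathbb{L}_u]$, and conclude immediately from the invariance of the notion of subtopos. In fact your write-up is more detailed than the paper's, which states the argument in a few lines and leaves the `restriction' claim implicit.
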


This theorem would be trivial if the two theories $\mathbb{MV}$ and $\mathbb{L}_u$ were bi-interpretable, but we proved in section \ref{interpretation} that this is not the case. The unifying power of the notion of classifying topos allow us to obtain a syntactic result by arguing semantically.

Given this result, it is natural to wonder whether there exists an effective means for obtaining, starting from a given quotient of either ${\mathbb L}_{u}$ or ${\mathbb{MV}}$, an explicit axiomatization of the quotient corresponding to it as in the theorem.

To address this issue, we observe that any pair of syntactic sites of definitions of the two classifying toposes such that the given equivalence of toposes is induced by a morphisms between them yields an explicit means for obtaining the quotient theory corresponding to a given quotient of the theory represented by the domain site; indeed, the correspondence between the quotient sites can be described directly in terms of the morphism, and one can exploit the correspondence between sets of additional sequents and Grothendieck topologies containing the given one established in \cite{OC6} to turn axiomatizations of quotient theories into quotient sites and conversely.

As we have seen in section \ref{interpretation}, the interpretation functor $I:\mathscr{C}_{\mathbb{MV}}\to \mathscr{C}_{{\mathbb L}_{u}}$ induces the equivalence of classifying toposes $\Sh(\mathscr{C}_{\mathbb{MV}}, J_{\mathbb{MV}})\simeq \Sh(\mathscr{C}_{{\mathbb L}_{u}}, J_{{\mathbb L}_{u}})$ (cf. Remark \ref{remint}). We thus obtain the following 

\begin{pro}\label{produal}
Let $\mathbb S$ be a quotient of the theory $\mathbb{MV}$. Then the quotient of ${\mathbb L}_{u}$ corresponding to $\mathbb S$ as in Theorem \ref{restrictions} can be described as the quotient of ${\mathbb L}_{u}$ obtained by adding all the sequents of the form $I(\sigma)$ where $\sigma$ ranges over all the axioms of $\mathbb S$.
\end{pro}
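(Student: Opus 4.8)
The plan is to identify the quotient of $\mathbb{L}_u$ axiomatized by the sequents $I(\sigma)$ with the quotient $\mathbb{S}''$ of $\mathbb{L}_u$ that corresponds to $\mathbb{S}$ under Theorem \ref{restrictions}, by comparing their categories of models in every Grothendieck topos. Write $\mathbb{S}'$ for the theory obtained from $\mathbb{L}_u$ by adding all the sequents $I(\sigma)$, where $\sigma$ ranges over the axioms of $\mathbb{S}$. First I would check that $\mathbb{S}'$ is genuinely a quotient of $\mathbb{L}_u$: by Remark \ref{remint}(b) each $I(\sigma)$ is a geometric sequent over $\Sigma_{\mathbb{L}_u}$ in the same context $\vec{x}$ as $\sigma$, so adjoining them to $\mathbb{L}_u$ produces a geometric theory over the same signature with additional geometric sequents, which is precisely what a quotient is.

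Next I would pin down $\mathbb{S}''$ semantically. Since, by Theorem \ref{restrictions}, the Morita-equivalence of Theorem \ref{main} restricts to a Morita-equivalence between $\mathbb{S}$ and $\mathbb{S}''$, and since $\Gamma_{\mathscr{E}}$ and $L_{\mathscr{E}}$ are mutually inverse equivalences $\mathbb{L}_u$-mod$(\mathscr{E}) \simeq \mathbb{MV}$-mod$(\mathscr{E})$, for every Grothendieck topos $\mathscr{E}$ the functor $\Gamma_{\mathscr{E}}$ carries the full subcategory $\mathbb{S}''$-mod$(\mathscr{E})$ isomorphically onto $\mathbb{S}$-mod$(\mathscr{E})$. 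Using that $L_{\mathscr{E}}\Gamma_{\mathscr{E}}(\mathcal{G}) \cong \mathcal{G}$ and that the class of models of a quotient is closed under isomorphism, this yields the two-sided statement: a model $\mathcal{G}$ of $\mathbb{L}_u$ in $\mathscr{E}$ is a model of $\mathbb{S}''$ if and only if $\Gamma_{\mathscr{E}}(\mathcal{G})$ is a model of $\mathbb{S}$. Equivalently, one reads this off the duality theorem of \cite{OC6}, using that the equivalence $\Set[\mathbb{MV}] \simeq \Set[\mathbb{L}_u]$ is the one induced by $I$ (as established in section \ref{interpretation}): a geometric morphism with domain $\mathscr{E}$ factors through the subtopos $\Set[\mathbb{S}'']$ exactly when the corresponding $\mathbb{MV}$-model $\Gamma_{\mathscr{E}}(\mathcal{G})$ factors through $\Set[\mathbb{S}]$.

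Then I would compute the models of $\mathbb{S}'$ via Remark \ref{remint}(b). A model $\mathcal{G}$ of $\mathbb{L}_u$ in $\mathscr{E}$ is a model of $\mathbb{S}'$ exactly when it validates every $I(\sigma)$; but by Remark \ref{remint}(b), $\mathcal{G}$ validates $I(\sigma)$ if and only if the associated MV-algebra $\Gamma_{\mathscr{E}}(\mathcal{G}) = [0, u_{\mathcal{G}}]$ validates $\sigma$. Hence $\mathcal{G}$ validates all the $I(\sigma)$ if and only if $\Gamma_{\mathscr{E}}(\mathcal{G})$ validates all the axioms $\sigma$ of $\mathbb{S}$, i.e. if and only if $\Gamma_{\mathscr{E}}(\mathcal{G})$ is a model of $\mathbb{S}$. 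Comparing with the previous paragraph, $\mathbb{S}'$ and $\mathbb{S}''$ have exactly the same models inside $\mathbb{L}_u$-mod$(\mathscr{E})$ for every Grothendieck topos $\mathscr{E}$. Since two quotients of a geometric theory that have the same models in every Grothendieck topos prove the same geometric sequents (by completeness of geometric logic with respect to models in arbitrary toposes) and therefore coincide as quotients in the sense of the duality theorem, I conclude that $\mathbb{S}' = \mathbb{S}''$, which is the assertion.

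I expect the only delicate points to be the precise semantic characterization of $\mathbb{S}''$ in the second paragraph, namely that the restricted Morita-equivalence picks out exactly those $\mathcal{G}$ for which $\Gamma_{\mathscr{E}}(\mathcal{G})$ is a model of $\mathbb{S}$, which must be read carefully off Theorem \ref{restrictions} together with the identification of the equivalence of classifying toposes as the one induced by $I$; and the final passage from ``same models in every topos'' to ``same quotient'', which relies on the bijection between quotients and subtoposes (and completeness over arbitrary Grothendieck toposes) rather than on set-based models alone. Everything else reduces to the pointwise translation of validity furnished by Remark \ref{remint}(b).
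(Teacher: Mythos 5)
Your argument is correct, but it unfolds the proposition by a semantic route where the paper argues syntactically through sites. The paper obtains Proposition \ref{produal} as an immediate consequence of the machinery of quotient sites: by the duality theorem of \cite{OC6}, quotients of $\mathbb{MV}$ (resp.\ of ${\mathbb L}_{u}$) correspond to Grothendieck topologies on $\mathscr{C}_{\mathbb{MV}}$ (resp.\ $\mathscr{C}_{{\mathbb L}_{u}}$) containing the syntactic topology, and since the equivalence $\Sh(\mathscr{C}_{\mathbb{MV}}, J_{\mathbb{MV}})\simeq \Sh(\mathscr{C}_{{\mathbb L}_{u}}, J_{{\mathbb L}_{u}})$ is induced by the interpretation functor $I$ as a morphism of sites, the topology associated with $\mathbb S$ transfers along $I$ to the topology generated by the images of its generating covers, i.e.\ to the quotient axiomatized by the sequents $I(\sigma)$. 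You instead identify the two candidate quotients $\mathbb{S}'$ and $\mathbb{S}''$ by showing they have the same models in every Grothendieck topos, using Remark \ref{remint}(b) to translate validity of $I(\sigma)$ in $\mathcal{G}$ into validity of $\sigma$ in $\Gamma_{\mathscr{E}}(\mathcal{G})$, and then invoking the duality theorem (equivalently, conservativity of the universal model) to pass from equality of model classes to syntactic equivalence of quotients. Both proofs rest on the same two pillars --- the duality theorem and Remark \ref{remint}(b) --- so the difference is one of packaging rather than of substance; your version has the merit of making explicit the semantic characterization of the corresponding quotient (namely, that $\mathcal{G}\models\mathbb{S}''$ iff $\Gamma_{\mathscr{E}}(\mathcal{G})\models\mathbb{S}$), which the paper leaves implicit, while the paper's site-theoretic formulation is what generalizes directly to the converse direction treated after Theorem \ref{synequiv}. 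The two delicate points you flag are exactly the right ones, and both are covered: the first by the fact, established in section \ref{interpretation} and Theorem \ref{main}, that the equivalence of classifying toposes induces $\Gamma_{\mathscr{E}}$ on categories of models, and the second by the duality theorem's identification of quotients up to syntactic equivalence.
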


To address the converse direction, we consider the natural way of representing the classifying topos of ${\mathbb L}_{u}$ as a subtopos of the classifying topos of ${\mathbb L}$ (cf. \cite{OC6} for the general technique of calculating the Grothendieck topology corresponding to a given quotient of a theory classified by a presheaf topos). 

By the duality theorem,  the classifying topos for the theory ${\mathbb L}_{u}$ can be represented in the form $\Sh({\textrm{f.p.} {\mathbb L}\textrm{-mod}(\Set)}^{\textrm{op}}, J) \hookrightarrow [\textrm{f.p.} {\mathbb L}\textrm{-mod}(\Set), \Set]$ for a unique topology $J$ (recall that every Horn theory is classified by the topos of covariant set-valued functors on its category of finitely presentable models). It therefore naturally arises the question of whether the equivalence of classifying toposes 
\[
\Sh({\textrm{f.p.} {\mathbb L}\textrm{-mod}(\Set)}^{\textrm{op}}, J)\simeq [\mathbf{MV}_{f.p.}, \Set]
\]
is induced by a morphism of sites $\mathbf{MV}_{f.p.} \to \textrm{f.p.} {\mathbb L}\textrm{-mod}(\Set)$, that is if the $\ell$-groups corresponding to finitely presented MV-algebras are all finitely presented as $\ell$-groups with unit. The answer to this question is positive, and will be provided in the next section.

\subsection{Finitely presented abelian $\ell$-groups with strong unit}

The following result is probably known by specialists but we give a proof as we have not found one in the literature.

\begin{pro}\label{profp}
The finitely presentable abelian $\ell$-groups with strong unit are exactly the abelian $\ell$-groups with strong unit which are finitely presentable (equivalently, finitely presented) as abelian $\ell$-groups with unit.
\end{pro}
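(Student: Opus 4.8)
The plan is to realise $\mathbb{L}_u$-mod$(\mathbf{Set})$ as a full subcategory of $\mathbb{L}$-mod$(\mathbf{Set})$ which is closed under filtered colimits and coreflective, and then to exploit a ``bounded part'' coreflection that commutes with filtered colimits. First I would record the elementary facts. A model of $\mathbb{L}_u$ is exactly a model of $\mathbb{L}$ whose unit is strong, and the homomorphisms of the two theories coincide (both being unital $\ell$-group homomorphisms); hence $\mathbb{L}_u$-mod$(\mathbf{Set})$ is a \emph{full} subcategory of $\mathbb{L}$-mod$(\mathbf{Set})$. This subcategory is closed under filtered colimits: filtered colimits of $\mathbb{L}$-models are computed on underlying sets, so every element of such a colimit comes from some stage, where it is bounded by a multiple of the unit, and this inequality is preserved by the colimit injections. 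Finally, since $\mathbb{L}$ may be formalized as a finitary algebraic theory (as noted above), categorical finite presentability coincides with finite presentation in the classical sense for $\mathbb{L}$-models, which disposes of the parenthetical equivalence in the statement.

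The crucial device is the following. For any model $\mathcal{K}$ of $\mathbb{L}$, set $\mathcal{K}^{b}:=\{x\in K \mid |x|\leq nu \text{ for some } n\in\mathbb{N}\}$. Using the $\ell$-group inequalities $|x+y|\leq |x|+|y|$, $|-x|=|x|$ and $|\sup(x,y)|,|\inf(x,y)|\leq |x|+|y|$, one checks that $\mathcal{K}^{b}$ is a sub-$\ell$-group-with-unit on which $u$ is a strong unit, i.e. a model of $\mathbb{L}_u$. I would then establish its two key properties. The first is a factorization: any unital homomorphism $f:\mathcal{G}\to \mathcal{K}$ out of a model $\mathcal{G}$ of $\mathbb{L}_u$ takes values in $\mathcal{K}^{b}$, since $|x|\leq nu$ in $\mathcal{G}$ forces $|f(x)|\leq nu$ in $\mathcal{K}$; thus $(-)^{b}$ is a coreflection and, for every $\mathbb{L}_u$-model $\mathcal{G}$,
\[
\mathrm{Hom}_{\mathbb{L}}(\mathcal{G},\mathcal{K})=\mathrm{Hom}_{\mathbb{L}_u}(\mathcal{G},\mathcal{K}^{b}).
\]
The second is that $(-)^{b}$ preserves filtered colimits: if $\mathcal{K}=\mathrm{colim}_i\,\mathcal{K}_i$ is filtered in $\mathbb{L}$-mod$(\mathbf{Set})$, the canonical comparison $\mathrm{colim}_i\,\mathcal{K}_i^{b}\to \mathcal{K}^{b}$ is bijective --- surjective because boundedness of an element of $\mathcal{K}$ is witnessed at some finite stage (the relation $\leq$, being finitary, holds in the colimit iff it holds at a stage), and injective because equality in the colimit is likewise detected at a finite stage --- hence an isomorphism of $\mathbb{L}_u$-models.

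With these tools in hand the two implications follow. For the ``if'' direction, assume $\mathcal{G}$ is a model of $\mathbb{L}_u$ that is finitely presentable as an $\mathbb{L}$-model; given a filtered diagram $(\mathcal{H}_i)$ in $\mathbb{L}_u$-mod$(\mathbf{Set})$ with colimit $\mathcal{H}$, closure under filtered colimits makes $\mathcal{H}$ the colimit computed in $\mathbb{L}$-mod$(\mathbf{Set})$ as well, so that by fullness and the finite presentability of $\mathcal{G}$ in $\mathbb{L}$-mod$(\mathbf{Set})$
\[
\mathrm{colim}_i\,\mathrm{Hom}_{\mathbb{L}_u}(\mathcal{G},\mathcal{H}_i)=\mathrm{colim}_i\,\mathrm{Hom}_{\mathbb{L}}(\mathcal{G},\mathcal{H}_i)=\mathrm{Hom}_{\mathbb{L}}(\mathcal{G},\mathcal{H})=\mathrm{Hom}_{\mathbb{L}_u}(\mathcal{G},\mathcal{H}).
\]
For the ``only if'' direction, assume $\mathcal{G}$ is finitely presentable as an $\mathbb{L}_u$-model; being a model of $\mathbb{L}_u$ it has a strong unit, so only finite presentability as an $\mathbb{L}$-model is in question. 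Given a filtered diagram $(\mathcal{K}_i)$ in $\mathbb{L}$-mod$(\mathbf{Set})$ with colimit $\mathcal{K}$, combining the factorization with the preservation of filtered colimits by $(-)^{b}$ gives
\[
\mathrm{colim}_i\,\mathrm{Hom}_{\mathbb{L}}(\mathcal{G},\mathcal{K}_i)=\mathrm{colim}_i\,\mathrm{Hom}_{\mathbb{L}_u}(\mathcal{G},\mathcal{K}_i^{b})=\mathrm{Hom}_{\mathbb{L}_u}(\mathcal{G},\mathcal{K}^{b})=\mathrm{Hom}_{\mathbb{L}}(\mathcal{G},\mathcal{K}),
\]
where the middle isomorphism is precisely the $\mathbb{L}_u$-finite-presentability of $\mathcal{G}$ applied to the filtered colimit $\mathcal{K}^{b}=\mathrm{colim}_i\,\mathcal{K}_i^{b}$.

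I expect the main obstacle to be the ``only if'' direction, and specifically the passage from arbitrary $\mathbb{L}$-models to their bounded parts. The subtlety is that a filtered colimit taken in $\mathbb{L}$-mod$(\mathbf{Set})$ need not retain the strong-unit condition, so one cannot test $\mathbb{L}$-finite-presentability directly against the hypothesis; the identity $\mathrm{Hom}_{\mathbb{L}}(\mathcal{G},-)\cong \mathrm{Hom}_{\mathbb{L}_u}(\mathcal{G},(-)^{b})$ together with the commutation of $(-)^{b}$ with filtered colimits is exactly what bridges the two regimes. Verifying that commutation (the bijectivity of the comparison map) is the one genuinely computational point, and it rests entirely on the finitary character of the order relation and of equality in filtered colimits of $\ell$-groups.
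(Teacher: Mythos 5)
Your proof is correct, and in the non-trivial direction it takes a genuinely different route from the paper's. The paper shows that any unital homomorphism from a finitely presented abelian $\ell$-group with unit $(G,u)$ to a group with strong unit factors through the quotient of $G$ by finitely many relations $|x_{i}|\leq k_{i}u$ imposed on a finite generating set (a strong-unit group, by the same absolute-value inequalities you invoke); it deduces that every abelian $\ell$-group with strong unit is a filtered colimit of strong-unit groups that are finitely presented over $\mathbb{L}$, and concludes with the retract argument (a finitely presentable object of the subcategory written as such a colimit is a retract of some stage, and retracts of finitely presented objects are finitely presented). You instead package those inequalities into the coreflection $\mathcal{K}\mapsto\mathcal{K}^{b}$ (the convex subgroup generated by the unit), check that it commutes with filtered colimits, and read off directly that $\mathrm{Hom}_{\mathbb{L}}(\mathcal{G},-)\cong\mathrm{Hom}_{\mathbb{L}_{u}}(\mathcal{G},(-)^{b})$ preserves filtered colimits. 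Your version is the more structural one: it exhibits $\mathbb{L}_{u}$-mod$(\mathbf{Set})$ as a full coreflective subcategory of $\mathbb{L}$-mod$(\mathbf{Set})$, closed under filtered colimits and with a finitary coreflector, from which the proposition is formal and no retract lemma is needed. The paper's version buys, as an explicit by-product, the filtered-colimit presentation of an arbitrary strong-unit group by strong-unit groups finitely presented over $\mathbb{L}$, together with concrete presentations of the approximating objects. The one step of yours that deserves the emphasis you give it is the surjectivity of the comparison $\mathrm{colim}_{i}\,\mathcal{K}_{i}^{b}\to\mathcal{K}^{b}$: it rests on the fact that $|x|\leq nu$ can be written as the finitary equation $\inf(|x|,nu)=|x|$ and hence holds in a filtered colimit exactly when it holds at some stage; your justification of this is correct.
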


\begin{proof}
Recall from chapter $2$ of $\cite{Steinberg}$ that the absolute value $|x|$ of an element $x$ of an abelian $\ell$-group with unit $(G, +, -, \leq, \inf, \sup,0)$ is the element $\sup(x, -x)$, that $|x|\geq 0$ for all $x\in G$, that $|x|=|-x|$ for all $x\in G$ and that the triangular inequality $|x+y|\leq |x|+|y|$ holds for all $x, y\in G$. These properties easily imply that for any abelian $\ell$-group with unit ${\cal G}:=(G, u)$ with generators $x_{1}, \ldots, x_{n}$, if for every $i\in \{1, \ldots, n\}$ there exists a natural number $k_{i}$ such that $|x_{i}|\leq k_{i}u$ then the unit $u$ is strong for $\cal G$ (one can prove by induction on the structure $t_{{\cal G}}(x_{1}, \ldots, x_{n})$ of the elements of $G$). Now, it is immediate to see that for any finitely presented $\ell$-group with unit $(G, u)$, any abelian $\ell$-group with strong unit $(H, v)$ and any abelian $\ell$-group unital homomorphism $f:(G, u)\to (H, v)$ there exists an abelian $\ell$-group with strong unit $(G', u')$ and $\ell$-group unital homomorphisms $h:(G, u)\to (G', u')$ and $g:(G', u')\to (H, v)$ such that $f=g\circ h$. Indeed, given generators $x_{1}, \ldots, x_{n}$ for $G$, since $v$ is a strong unit for $H$ there exists for each $i\in \{1, \ldots, n\}$ a natural number $k_{i}$ such that $|f(x_{i})|\leq k_{i}$; it thus suffices to take $G'$ equal to the quotient of $G$ by the congruence generated by the relations $|x_{i}|\leq k_{i}$ for $i\in \{1, \ldots, n\}$ and $u'=u$.

The fact that every homomorphism from a finitely presented abelian $\ell$-group with unit to an $\ell$-group with unit $(H, v)$ factors through a homomorphism from a $\ell$-group with strong unit to $(H, v)$ clearly implies that every abelian $\ell$-group with strong unit can be expressed as a filtered colimit of abelian $\ell$-groups with strong unit which are finitely presented as abelian $\ell$-groups with unit. Since a retract of a finitely presented abelian $\ell$-group is again finitely presented, we can conclude that every abelian $\ell$-group with strong unit which is finitely presentable in the category of abelian $\ell$-groups with strong unit is finitely presented as an abelian $\ell$-group with unit. This concludes the proof of the proposition, as the other direction is trivial.   
\end{proof}

We shall now proceed to give a syntactic description of the category of finitely presentable abelian $\ell$-groups with strong unit. To this end, we recall from \cite{OCS} the following notions.

Let $\mathbb T$ be a geometric theory over a signature $\Sigma$ and $\phi(\vec{x})$ a geometric formula-in-context over $\Sigma$. Then $\phi(\vec{x})$ is said to be \emph{$\mathbb T$-irreducible} if for any family $\{\theta_{i} \textrm{ | } i\in I\}$ of $\mathbb T$-provably functional geometric formulae $\{\vec{x_{i}}, \vec{x}.\theta_{i}\}$ from $\{\vec{x_{i}}. \phi_{i}\}$ to $\{\vec{x}. \phi\}$ such that $\phi \vdash_{\vec{x}} \mathbin{\mathop{\textrm{\huge $\vee$}}\limits_{i\in I}}(\exists \vec{x_{i}})\theta_{i}$ is provable in $\mathbb T$, there exist $i\in I$ and a $\mathbb T$-provably functional geometric formula $\{\vec{x}, \vec{x_{i}}. \theta'\}$ from $\{\vec{x}. \phi\}$ to $\{\vec{x_{i}}. \phi_{i}\}$ such that $\phi \vdash_{\vec{x}} (\exists \vec{x_{i}})(\theta' \wedge \theta_{i})$ is provable in $\mathbb T$.

In other words, a formula-in-context $\phi(\vec{x})$ is $\mathbb T$-irreducible if and only if every $J_{\mathbb T}$-covering sieve on the object $\{\vec{x}. \phi\}$ of the geometric syntactic category $\mathscr{C}_{\mathbb T}$ of $\mathbb T$ is trivial.

\begin{theorem}[Corollary 3.15 \cite{OCS}]\label{thmpresheaf}
Let $\mathbb T$ be a geometric theory over a signature $\Sigma$. Then $\mathbb T$ is classified by a presheaf topos if and only if there exists a collection $\cal F$ of $\mathbb T$-irreducible formulae-in-context over $\Sigma$ such that for every geometric formula $\{\vec{y}. \psi\}$ over $\Sigma$ there exist objects $\{\vec{x_{i}}. \phi_{i}\}$ in $\cal F$ as $i$ varies in $I$ and $\mathbb T$-provably functional geometric formulae $\{\vec{x_{i}}, \vec{y}.\theta_{i}\}$ from $\{\vec{x_{i}}. \phi_{i}\}$ to $\{\vec{y}. \psi\}$ such that $\psi \vdash_{\vec{y}} \mathbin{\mathop{\textrm{\huge $\vee$}}\limits_{i\in I}}(\exists \vec{x_{i}})\theta_{i}$ is provable in $\mathbb T$.
\end{theorem}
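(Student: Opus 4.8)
The plan is to pass from syntax to the classifying topos and read off the statement as an instance of the characterization of presheaf toposes as those Grothendieck toposes admitting a small separating set of indecomposable projective (\emph{irreducible}) objects. I would represent $\mathbf{Set}[\mathbb{T}]$ as the topos $\mathbf{Sh}(\mathscr{C}_{\mathbb T}, J_{\mathbb T})$ of sheaves on the geometric syntactic category equipped with its canonical topology, and work throughout with the canonical full and faithful embedding $\ell:\mathscr{C}_{\mathbb T}\to \mathbf{Set}[\mathbb{T}]$ obtained by composing the Yoneda embedding with sheafification.

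The first key step is a dictionary lemma relating the syntax to this topos. On one side, $\mathbb{T}$-provably functional geometric formulae $\{\vec{x}, \vec{y}.\theta\}$ from $\{\vec{x}.\phi\}$ to $\{\vec{y}.\psi\}$ correspond exactly to morphisms $\ell(\{\vec{x}.\phi\})\to \ell(\{\vec{y}.\psi\})$; on the other, $J_{\mathbb T}$-covering sieves on an object $\{\vec{x}.\phi\}$ correspond to epimorphic families into $\ell(\{\vec{x}.\phi\})$. Granting this, a formula $\phi(\vec{x})$ is $\mathbb{T}$-irreducible precisely when the only epimorphic family into $\ell(\{\vec{x}.\phi\})$ is the trivial one, i.e. when $\ell(\{\vec{x}.\phi\})$ is an irreducible object of $\mathbf{Set}[\mathbb{T}]$ (equivalently, indecomposable and projective). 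Moreover, the objects $\ell(\{\vec{y}.\psi\})$, as $\psi$ ranges over all geometric formulae, form a separating family for $\mathbf{Set}[\mathbb{T}]$.

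With this dictionary in place the reverse implication is short. Given a collection $\mathcal F$ as in the statement, the objects $\ell(\{\vec{x_i}.\phi_i\})$ are irreducible by the lemma, and the hypothesis that every $\{\vec{y}.\psi\}$ is covered by objects of $\mathcal F$ through $\mathbb{T}$-provably functional formulae translates, via the dictionary, into the assertion that the $\ell(\{\vec{x_i}.\phi_i\})$ jointly cover each $\ell(\{\vec{y}.\psi\})$; since the latter already separate, the $\ell(\{\vec{x_i}.\phi_i\})$ themselves form a separating family of irreducible objects. By Bunge's theorem---a Grothendieck topos possessing a small separating set of indecomposable projective objects is equivalent to a presheaf topos---I conclude that $\mathbf{Set}[\mathbb{T}]$ is a presheaf topos, so $\mathbb{T}$ is of presheaf type.

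For the forward implication I would start from an equivalence $\mathbf{Set}[\mathbb{T}]\simeq [\mathcal{C},\mathbf{Set}]$ and use that the representable functors are irreducible objects that separate, so that the irreducible objects of $\mathbf{Set}[\mathbb{T}]$ separate. The substantive point is a definability step: I must show that every irreducible object is, up to isomorphism, of the form $\ell(\{\vec{x}.\phi\})$ for some (necessarily $\mathbb{T}$-irreducible) geometric formula $\phi$. Here I would exploit the density of $\ell(\mathscr{C}_{\mathbb T})$ in the classifying topos---every object is a colimit of images of formulae---so that an indecomposable projective object is a retract of some $\ell(\{\vec{x}.\phi\})$; the retraction is an idempotent on $\ell(\{\vec{x}.\phi\})$, which, being a morphism in the topos, is induced by a $\mathbb{T}$-provably functional formula and can be split by passing to a subobject cut out by a further geometric formula, yielding the desired $\mathbb{T}$-irreducible representative. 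Taking $\mathcal F$ to be the resulting set of formulae and reading the separation property of the representables back through the dictionary gives the required covering condition. I expect this definability and retract-splitting step to be the main obstacle, since it is where one must convert purely topos-theoretic data---an arbitrary indecomposable projective object together with an idempotent on a syntactic object---into honest syntax, while keeping track of the Cauchy-completion phenomena that arise because the syntactic category need not be idempotent-complete.
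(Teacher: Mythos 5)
The paper does not prove this statement at all: it is imported verbatim as Corollary 3.15 of \cite{OCS}, so there is no internal proof to compare against. Your reconstruction is essentially the argument given in that reference: translate $\mathbb T$-irreducibility of $\{\vec{x}.\phi\}$ into the assertion that $\ell(\{\vec{x}.\phi\})$ is an indecomposable projective of $\mathbf{Sh}(\mathscr{C}_{\mathbb T},J_{\mathbb T})$, translate the covering condition into the assertion that these objects separate, and invoke Bunge's characterization of presheaf toposes as the Grothendieck toposes with a separating set of indecomposable projectives; conversely, use that every indecomposable projective in a presheaf topos is a retract of a representable, hence (by density of $\ell(\mathscr{C}_{\mathbb T})$) a retract of some $\ell(\{\vec{x}.\phi\})$. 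Two small remarks. First, your dictionary should say that $\mathbb T$-irreducibility corresponds to every epimorphic family onto $\ell(\{\vec{x}.\phi\})$ containing a split epimorphism (equivalently, every $J_{\mathbb T}$-covering sieve being maximal), not to the nonexistence of nontrivial epimorphic families; your immediate gloss ``indecomposable and projective'' shows you mean the right thing. Second, the step you single out as the main obstacle --- splitting the idempotent syntactically --- is in fact unproblematic: $\mathscr{C}_{\mathbb T}$ is a geometric, hence regular, category, so idempotents already split in it (the splitting of an idempotent $[\theta]$ on $\{\vec{x}.\phi\}$ is its image $\{\vec{x'}.(\exists\vec{x})\theta\}$, since $e=m\circ q$ with $q$ a cover and $m$ monic forces $q\circ m=\mathrm{id}$). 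Since retracts are absolute and $\ell$ is full and faithful, the retract in the topos is $\ell$ of the syntactic splitting, and no Cauchy-completion issue arises. With these adjustments the argument is complete and is the intended one.
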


The theorem implies, by the Comparison Lemma, that if $\mathbb T$ is classified by a presheaf topos then its classifying topos $\Sh(\mathscr{C}_{\mathbb T}, J_{\mathbb T})$ is equivalent to the topos $[{\mathscr{C}^{irr}_{\mathbb T}}^{\textrm{op}}, \Set]$, where ${\mathscr{C}^{irr}_{\mathbb T}}$ is the full subcategory of $\mathscr{C}_{\mathbb T}$ on the $\mathbb T$-irreducible formulae, and hence that this latter category is dually equivalent to the category of finitely presentable models of $\mathbb T$ via the equivalence sending any such formula $\{\vec{x}. \phi\}$ to the model of $\mathbb T$ which it presents. In fact, such model corresponds to the geometric functor $\mathscr{C}_{\mathbb T}\to \Set$ represented by the formula $\{\vec{x}. \phi\}$ (cf. section \ref{interpretation}) and hence it admits the following syntactic description: its underlying set is given by $Hom_{\mathscr{C}_{\mathbb T}}(\{\vec{x}. \phi\}, \{z. \top\})$ and the order and operations are the obvious ones.   

On the other hand, the category of finitely presented MV-algebras is well-known to be dual to the algebraic syntactic category $\mathscr{C}^{alg}_{\mathbb{MV}}$ of the theory $\mathbb{MV}$ (cf. section \ref{geom}). 
  
We can thus conclude that, even though the theories $\mathbb{MV}$ and ${\mathbb L}_{u}$ are not bi-interpretable, there exists an equivalence of categories between the category $\mathscr{C}^{alg}_{\mathbb{MV}}$ and the category $\mathscr{C}^{irr}_{{\mathbb L}_{u}}$:

\begin{theorem}\label{synequiv}
With the notation above, we have an equivalence of categories
\[
\mathscr{C}^{alg}_{\mathbb{MV}} \simeq \mathscr{C}^{irr}_{{\mathbb L}_{u}}
\]
representing the syntactic counterpart of the equivalence of categories
\[
\mathcal{MV}_{f.p.}\simeq \textrm{f.p.} {{\mathbb L}_{u}}\textrm{-mod}(\Set).
\]
The former equivalence is the restriction to $\mathscr{C}^{alg}_{\mathbb{MV}}$ of the interpretation of the theory $\mathbb{MV}$ into the theory ${\mathbb L}_{u}$ defined in section \ref{interpretation}.  
\end{theorem}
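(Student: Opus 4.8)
The plan is to obtain the equivalence by combining two separate dualities between each syntactic category and its category of finitely presentable models, and then to verify that the functor realizing it is exactly the restriction of $I$. First I would record the ingredients. On the MV side, $\mathscr{C}^{alg}_{\mathbb{MV}}$ is dual to $\mathcal{MV}_{f.p.}$ via the functor sending an algebraic formula $\{\vec{x}.\phi\}$ to the finitely presented MV-algebra $M_{\phi}$ it presents. On the $\ell$-group side, ${\mathbb L}_{u}$ is of presheaf type (Remark \ref{rem2}(b)), so by the discussion following Theorem \ref{thmpresheaf} the category $\mathscr{C}^{irr}_{{\mathbb L}_{u}}$ is dual to $\textrm{f.p.}\,{\mathbb L}_{u}\textrm{-mod}(\Set)$ via the functor sending an irreducible formula $\{\vec{y}.\psi\}$ to the ${\mathbb L}_{u}$-model $G_{\psi}$ it presents. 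Finally, since the classical Mundici equivalence $\mathcal{MV}\simeq\mathcal{L}_{u}$ is an equivalence of categories it preserves finite presentability in both directions, hence restricts to an equivalence $\mathcal{MV}_{f.p.}\simeq \textrm{f.p.}\,{\mathbb L}_{u}\textrm{-mod}(\Set)$. Composing the two dualities with (the opposite of) this restricted equivalence already yields an abstract equivalence $\mathscr{C}^{alg}_{\mathbb{MV}}\simeq\mathscr{C}^{irr}_{{\mathbb L}_{u}}$; the real content of the theorem is that it is induced by the interpretation $I$.

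The heart of the argument is therefore the identification of $I$ on objects with $L$ on presented models. I would fix $\{\vec{x}.\phi\}$ in $\mathscr{C}^{alg}_{\mathbb{MV}}$ and compute the model presented by $I(\{\vec{x}.\phi\})$ through its universal property: for every ${\mathbb L}_{u}$-model $\mathcal{G}$ in $\Set$,
\[
\mathrm{Hom}_{{\mathbb L}_{u}}(G_{I(\phi)},\mathcal{G})\;\cong\;[[\vec{x}.\,I(\phi)]]_{\mathcal{G}}\;\cong\;[[\vec{x}.\,\phi]]_{\Gamma(\mathcal{G})}\;\cong\;\mathrm{Hom}_{\mathbb{MV}}(M_{\phi},\Gamma(\mathcal{G}))\;\cong\;\mathrm{Hom}_{{\mathbb L}_{u}}(L(M_{\phi}),\mathcal{G}),
\]
where the first and third isomorphisms are the universal properties of the presented models, the second is Remark \ref{remint}(b) (the interpretation of $I(\phi)$ in $\mathcal{G}$ coincides with that of $\phi$ in the associated MV-algebra $\Gamma(\mathcal{G})$), and the last uses that $\Gamma$ is fully faithful with $\Gamma\circ L\cong\mathrm{id}$ (Mundici's equivalence). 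As these isomorphisms are natural in $\mathcal{G}$, Yoneda gives $G_{I(\phi)}\cong L(M_{\phi})$.

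With this in hand the remaining steps are routine. Since $M_{\phi}$ is finitely presentable and $L$ preserves finite presentability, $G_{I(\phi)}\cong L(M_{\phi})$ is a finitely presentable ${\mathbb L}_{u}$-model; by the presheaf-type characterization a geometric formula is ${\mathbb L}_{u}$-irreducible precisely when the model it presents is finitely presentable, so $I(\{\vec{x}.\phi\})$ indeed lies in $\mathscr{C}^{irr}_{{\mathbb L}_{u}}$ and $I$ restricts to a functor $\mathscr{C}^{alg}_{\mathbb{MV}}\to\mathscr{C}^{irr}_{{\mathbb L}_{u}}$. Read through the two dualities, this restriction is nothing but the opposite of $M\mapsto L(M)$ on finitely presentable models, i.e. the restricted Mundici equivalence, so it is fully faithful and essentially surjective: full faithfulness transfers from $L$ through the dualities, and essential surjectivity holds because any irreducible $\psi$ presents some finitely presentable $\mathcal{G}\cong L(\Gamma(\mathcal{G}))$ with $\Gamma(\mathcal{G})$ finitely presentable, whence $\psi\cong I(\phi)$ for any $\phi$ presenting $\Gamma(\mathcal{G})$. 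The only genuinely delicate point is the central computation above, and within it the appeal to Remark \ref{remint}(b) together with the verification that $I(\phi)$ is irreducible; everything else is bookkeeping with the two dualities and the preservation of finite presentability under an equivalence.
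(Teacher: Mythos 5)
Your proposal is correct and follows essentially the same route as the paper: both reduce the theorem to the two dualities $\mathscr{C}^{alg}_{\mathbb{MV}}\simeq \mathcal{MV}_{f.p.}^{\textrm{op}}$ and $\mathscr{C}^{irr}_{{\mathbb L}_{u}}\simeq (\textrm{f.p.}\,{\mathbb L}_{u}\textrm{-mod}(\Set))^{\textrm{op}}$, and both establish the key point via the same chain of natural bijections $[[\vec{x}.\,I(\phi)]]_{\mathcal{G}}\cong [[\vec{x}.\,\phi]]_{[0,u_{\mathcal G}]}\cong \mathrm{Hom}(M_{\phi},[0,u_{\mathcal G}])\cong \mathrm{Hom}(L(M_{\phi}),\mathcal{G})$, concluding that $I(\{\vec{x}.\phi\})$ presents the $\ell$-group corresponding to the MV-algebra presented by $\phi$. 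Your explicit Yoneda step and the verification that $I(\phi)$ lands in $\mathscr{C}^{irr}_{{\mathbb L}_{u}}$ are just slightly more spelled-out versions of what the paper leaves implicit.
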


\begin{proof}
In view of the arguments preceding the statement of the theorem, it remains to prove that the syntactic equivalence $\mathscr{C}^{alg}_{\mathbb{MV}} \simeq \mathscr{C}^{irr}_{{\mathbb L}_{u}}$ induced by the equivalence of classifying toposes is the restriction of the interpretation functor $I$ defined in section \ref{interpretation}. To this end, it suffices to notice that for any formula $\phi(\vec{x})$ in $\mathscr{C}_{\mathbb{MV}}$, by Corollary \ref{models of MV}, for any abelian $\ell$-group with strong unit $\cal G$ the interpretation of the formula $I(\{\vec{x}.\phi\})$ in $\cal G$ is naturally in bijection with the interpretation of the formula $\phi(\vec{x})$ in the associated MV-algebra $[0, u_{\cal G}]$; therefore, if the formula $\phi(\vec{x})$ presents a MV-algebra $\cal A$ then the $\ell$-group ${\cal G}_{\cal A}$ corresponding to $\cal A$ via Mundici's equivalence satisfies the universal property of the abelian $\ell$-group presented by the formula $I(\{\vec{x}.\phi\})$. Indeed, for any $\ell$-group with strong unit ${\cal G}=(G, u)$, the interpretation of the formula $I(\{\vec{x}.\phi\})$ in $\cal G$ is by definition of $I$ in natural bijection with the interpretation of the formula $\phi(\vec{x})$ in the MV-algebra $[0, u_{\cal G}]$, which is in turn in bijection with the MV-algebra homomorphisms $A\to [0, u_{\cal G}]$ and hence, by Mundici's equivalence, with the abelian $\ell$-group unital homomorphisms ${\cal G}_{\cal A} \to {\cal G}$.   
\end{proof}

\begin{obs}
The formula-in-context $\{x. \top\}$ is clearly not ${\mathbb L}_{u}$-irreducible, and in fact we proved in section \ref{interpretation} that it is not in the image of the interpretation functor $I:\mathscr{C}_{\mathbb{MV}}\to \mathscr{C}_{{\mathbb L}_{u}}$.
\end{obs}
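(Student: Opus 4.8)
The statement has two parts, which I would treat separately: first, that $\{x.\top\}$ fails to be $\mathbb{L}_u$-irreducible, and second, that it cannot be isomorphic to any object in the image of $I$. For the first part, the plan is to exploit the only genuinely infinitary axiom of $\mathbb{L}_u$, namely the strong unit axiom $14$ in its equivalent form $\top\vdash_x\bigvee_{n\in\mathbb N}|x|\leq nu$, in order to produce an explicit non-trivial covering family on $\{x.\top\}$ in the syntactic site $(\mathscr{C}_{\mathbb{L}_u},J_{\mathbb{L}_u})$. Concretely, for each $n\geq 1$ I would set $\phi_n(x_n):=(|x_n|\leq nu)$ and take $\theta_n(x_n,x):=(x=x_n)$, which is an $\mathbb{L}_u$-provably functional formula from $\{x_n.\phi_n\}$ to $\{x.\top\}$. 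The sequent $\top\vdash_x\bigvee_n(\exists x_n)\theta_n$ is then exactly the strong unit axiom, so the family $\{\theta_n\}$ satisfies the covering hypothesis in the definition of irreducibility recalled before Theorem \ref{thmpresheaf}.

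It then remains to verify that the irreducibility condition fails for this family, which is where the content lies. I would suppose for contradiction that for some $n$ there is an $\mathbb{L}_u$-provably functional $\theta'(x,x_n)$ from $\{x.\top\}$ to $\{x_n.\phi_n\}$ with $\top\vdash_x(\exists x_n)(\theta'\wedge\theta_n)$ provable. Since $\theta'$ is a morphism into $\{x_n.\phi_n\}$ we have $\theta'\vdash_{x,x_n}|x_n|\leq nu$, and substituting $x_n=x$ along the equality $\theta_n$ yields $\top\vdash_x|x|\leq nu$, provable in $\mathbb{L}_u$ for this fixed $n$. This is refuted by soundness in the model $(\mathbb{R},+,-,\leq,0,1)$, where $|x|\leq n$ fails for suitable $x$; hence no $n$ works and $\{x.\top\}$ is not $\mathbb{L}_u$-irreducible.

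For the second part, the plan is to combine Remark \ref{remint}(b) and Corollary \ref{models of MV} with a single well-chosen set-based model. By Remark \ref{remint}(b) every object in the image of $I$ is a formula $\{\vec{x}.\chi\}$ in a context $\vec{x}=(x_1,\dots,x_n)$ with $\chi\vdash_{\vec{x}}\bigwedge_i(0\leq x_i\leq u)$, that is a subobject of $A^n$ where $A=\{x.\,0\leq x\leq u\}$; moreover, for any $\mathbb{L}_u$-model $\mathcal{G}$ the geometric functor $F_{\mathcal{G}}$ sends $I(\{\vec{x}.\phi\})$ to $[[\vec{x}.\phi]]_{\Gamma(\mathcal{G})}\subseteq[0,u_{\mathcal{G}}]^n$. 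Suppose then that $\{x.\top\}\cong I(\{\vec{x}.\phi\})$ in $\mathscr{C}_{\mathbb{L}_u}$ for some $\mathbb{MV}$-formula $\phi$. Applying the isomorphism-preserving functor $F_{\mathcal{G}}$ for $\mathcal{G}=(\mathbb{Z},+,-,\leq,0,1)$ would give a bijection between $[[x.\top]]_{\mathbb{Z}}=\mathbb{Z}$ and $[[\vec{x}.\phi]]_{\Gamma(\mathbb{Z})}$. But $\Gamma(\mathbb{Z})=[0,1]\cap\mathbb{Z}=\{0,1\}$, so the latter is a subset of the finite set $\{0,1\}^n$, whereas $\mathbb{Z}$ is infinite: a contradiction. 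Thus $\{x.\top\}$ lies outside the image of $I$, even up to isomorphism.

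The routine parts are the verifications that $\theta_n$ and $\theta'$ are provably functional and that $\Gamma(\mathbb{Z})=\{0,1\}$; the one point demanding care, in the first part, is the passage from the hypothetical section $\theta'$ to the provable sequent $\top\vdash_x|x|\leq nu$, which must be carried out strictly within the calculus of $\mathbb{L}_u$-provably functional formulae so that the appeal to soundness in $\mathbb{R}$ is legitimate. I expect the choice of the integer model $(\mathbb{Z},1)$ — whose unit interval is finite while the group itself is infinite — to be the crux of the second part, since it is precisely the feature distinguishing the unbounded sort $\{x.\top\}$ from the bounded objects in the image of $I$.
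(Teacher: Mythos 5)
Your proof is correct, and it is considerably more explicit than anything the paper itself writes down: the paper dismisses non-irreducibility as ``clear'' and, for the image claim, merely points back to the proof of Theorem \ref{thm:interpretazione}, which literally establishes the non-existence of a bi-interpretation $J:\mathscr{C}_{\mathbb{L}_u}\to\mathscr{C}_{\mathbb{MV}}$ (via finite MV-algebras, torsion-freeness of $\ell$-groups, and reflection of isomorphisms) rather than the statement about the image of $I$. Your second part is exactly the natural adaptation of that earlier argument to what the remark actually asserts: instead of finite MV-algebras plus torsion-freeness you evaluate at the single model $(\mathbb{Z},1)$, whose unit interval $\Gamma(\mathbb{Z})=\{0,1\}$ is finite while the group is infinite; this is cleaner, avoids the appeal to iso-reflection, and proves non-membership even up to isomorphism, which is the relevant notion since irreducibility is isomorphism-invariant. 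Your first part is also a genuine addition: rather than deducing non-irreducibility from the image claim via Theorem \ref{synequiv} (every $\mathbb{L}_u$-irreducible formula is, up to isomorphism, in the image of $I$ restricted to $\mathscr{C}^{alg}_{\mathbb{MV}}$), which is presumably the paper's implicit route, you exhibit the strong-unit axiom as an explicit covering of $\{x.\top\}$ admitting no section, refuting $\top\vdash_x |x|\leq nu$ by soundness in $(\mathbb{R},1)$; both routes work, and yours does not depend on the presheaf-type machinery behind Theorem \ref{synequiv}. One small repair is needed: with the paper's definition of provably functional formulae, an arrow from $\{x_n.\phi_n\}$ to $\{x.\top\}$ must satisfy $\theta\vdash_{x_n,x}\phi_n\wedge\top$, so the bare formula $x=x_n$ is not itself an arrow; take instead $\theta_n:=(x=x_n)\wedge(|x_n|\leq nu)$, the graph of the inclusion. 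Nothing downstream changes, since the covering sequent $\top\vdash_x\bigvee_n(\exists x_n)\theta_n$ is still equivalent to the strong-unit axiom and your derivation of $\top\vdash_x|x|\leq nu$ from a hypothetical factorization goes through verbatim.
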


As a corollary of the theorem and Proposition \ref{profp}, we obtain the following result.

\begin{cor}
The finitely presentable $\ell$-groups with strong unit are exactly the finitely presentable $\ell$-groups with unit which are presented by a ${\mathbb L}_{u}$-irreducible formula. The $\ell$-group presented by such a formula $\phi(\vec{x})$ has as underlying set the set $Hom_{\mathscr{C}^{irr}_{{\mathbb L}_{i}}}(\{\vec{x}. \phi\}, \{z. \top\})$ of ${\mathbb L}_{u}$-provable equivalence classes of ${\mathbb L}_{u}$-provably functional geometric formulae from $\{\vec{x}. \phi\}$ to $\{z. \top\}$, and as order and operations the obvious ones. In fact, if we consider $\mathbb L$ as an algebraic theory (i.e., without the predicate $\leq$, which can be defined in terms of the operation inf), we have a canonical isomorphism $Hom_{\mathscr{C}^{irr}_{{\mathbb L}_{i}}}(\{\vec{x}. \phi\}, \{z. \top\})\cong Hom_{\mathscr{C}^{alg}_{\mathbb L}}(\{\vec{x}. \phi\}, \{z. \top\})$; that is, for any ${\mathbb L}_{u}$-provably functional formula $\theta(\vec{x}, z):\{\vec{x}. \phi\}\to \{z. \top\}$ there exists a term $t(\vec{x})$ over the signature of ${\mathbb L}_{u}$ such that the sequent $(\theta(\vec{x}, z)\dashv\vdash_{\vec{x}, z} z=t(\vec{x}))$ is provable in ${\mathbb L}_{u}$. 

Conversely, a formula-in-context $\phi(\vec{x})$ over the signature of ${\mathbb L}_{u}$ presents a ${\mathbb L}_{u}$-model if and only if it is ${\mathbb L}_{u}$-irreducible, or equivalently if there exists a formula $\psi(\vec{x})$ in $\mathscr{C}^{alg}_{\mathbb{MV}}$ such that $I(\{\vec{x}. \psi\})$ is ${\mathbb L}_{u}$-equivalent (in the sense of section \ref{geom}) to $\{\vec{x}.\phi\}$.
\end{cor}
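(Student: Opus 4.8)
The plan is to read off the three assertions of the corollary from Theorem \ref{synequiv}, Proposition \ref{profp}, and the general presheaf-type machinery recalled before Theorem \ref{thmpresheaf}. Since $\mathbb{L}_u$ is of presheaf type (cf. Remark \ref{rem2}(b)), the category $\mathscr{C}^{irr}_{\mathbb{L}_u}$ is dual to $\textrm{f.p.}\,\mathbb{L}_u\textrm{-mod}(\Set)$, so that the finitely presentable $\mathbb{L}_u$-models are exactly the models presented by $\mathbb{L}_u$-irreducible formulae, each such formula $\{\vec{x}.\phi\}$ presenting the model whose underlying set is $Hom_{\mathscr{C}_{\mathbb{L}_u}}(\{\vec{x}.\phi\}, \{z.\top\})$ with order and operations induced by the generic $\mathbb{L}_u$-structure on $\{z.\top\}$ (note that $\{z.\top\}$ itself is not irreducible, so the hom-set is taken in $\mathscr{C}_{\mathbb{L}_u}$, matching the description of its elements as $\mathbb{L}_u$-provably functional formulae modulo $\mathbb{L}_u$-provable equivalence). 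For the first characterization I would simply observe that a finitely presentable $\ell$-group with strong unit is the same as a finitely presentable $\mathbb{L}_u$-model, hence is presented by some $\mathbb{L}_u$-irreducible formula; and that, by Proposition \ref{profp}, these are precisely the $\ell$-groups with strong unit that are finitely presentable as $\ell$-groups with unit. Combining the two statements gives the asserted class equality, the implication ``presented by a $\mathbb{L}_u$-irreducible formula $\Rightarrow$ $\ell$-group with strong unit'' being automatic.

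The description of the underlying set is thus the general syntactic one. The substantive content is the canonical isomorphism $Hom_{\mathscr{C}^{irr}_{\mathbb{L}_u}}(\{\vec{x}.\phi\}, \{z.\top\})\cong Hom_{\mathscr{C}^{alg}_{\mathbb{L}}}(\{\vec{x}.\phi\}, \{z.\top\})$, i.e. that every $\mathbb{L}_u$-provably functional $\theta(\vec{x},z)$ into $\{z.\top\}$ is $\mathbb{L}_u$-provably equivalent to some $z=t(\vec{x})$ with $t$ a term. Here I would invoke Theorem \ref{synequiv}: up to $\mathbb{L}_u$-equivalence $\phi=I(\{\vec{x}.\psi\})$ for an algebraic $\mathbb{MV}$-formula $\psi$, and by the definition of $I$ (cf. Remark \ref{remint} and Corollary \ref{models of MV}) such a $\phi$ is $\mathbb{L}_u$-equivalent to a conjunction of equations, the interval constraints $0\le x_i\le u$ becoming equations once $\leq$ is expressed through $\inf$. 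This makes $\{\vec{x}.\phi\}$ a legitimate object of $\mathscr{C}^{alg}_{\mathbb{L}}$ and legitimizes both sides of the desired isomorphism.

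The crux, which I expect to be the main obstacle, is to show that the $\mathbb{L}$-model $G'$ presented by $\phi$ (regarded as an algebraic presentation) already carries a strong unit, so that it coincides with the $\mathbb{L}_u$-model $G$ presented by $\phi$. Indeed, the interval constraints force each generator to satisfy $|x_i|\le u$, whence, by exactly the triangle-inequality induction on the term structure of elements used in the proof of Proposition \ref{profp} (with all $k_i=1$), the unit $u$ is strong in $G'$. Since $\mathbb{L}$-homomorphisms between $\mathbb{L}_u$-models coincide with $\mathbb{L}_u$-homomorphisms ($\leq$ and $u$ being preserved, $\leq$ being definable from $\inf$), the $\mathbb{L}_u$-model $G'$ satisfies the universal property of $G$, so $G'\cong G$. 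As $\mathbb{L}$ is algebraic, the underlying set of $G'=G$ is $Hom_{\mathscr{C}^{alg}_{\mathbb{L}}}(\{\vec{x}.\phi\}, \{z.\top\})$, realized by terms modulo $\mathbb{L}$-provable equality; the comparison sending a term $t$ to the functional formula $z=t(\vec{x})$ is then a bijection onto $Hom_{\mathscr{C}_{\mathbb{L}_u}}(\{\vec{x}.\phi\}, \{z.\top\})$, both sides being the underlying set of the same model, which yields the term-representation of every functional formula. This coincidence of the two presentations is the only step genuinely using the strong-unit axiom rather than formal properties of classifying toposes.

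Finally, for the converse I would argue purely formally. A formula over $\Sigma_{\mathbb{L}_u}$ presents a $\mathbb{L}_u$-model if and only if it is $\mathbb{L}_u$-irreducible: this is the general characterization of \cite{OCS} of the formulae presenting models of a presheaf-type theory, applicable since $\mathbb{L}_u$ is of presheaf type. By Theorem \ref{synequiv}, the restriction of $I$ exhibits $\mathscr{C}^{alg}_{\mathbb{MV}}$ as equivalent to $\mathscr{C}^{irr}_{\mathbb{L}_u}$, so the $\mathbb{L}_u$-irreducible formulae are, up to $\mathbb{L}_u$-equivalence, precisely those of the form $I(\{\vec{x}.\psi\})$ with $\{\vec{x}.\psi\}$ an object of $\mathscr{C}^{alg}_{\mathbb{MV}}$. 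Chaining these two equivalences gives the stated threefold equivalence, completing the proof.
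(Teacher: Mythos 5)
Your proposal is correct and follows essentially the same route as the paper, which offers no explicit proof and simply derives the corollary from Theorem \ref{synequiv}, Proposition \ref{profp} and the general presheaf-type machinery (the duality between $\mathscr{C}^{irr}_{{\mathbb L}_{u}}$ and the finitely presentable ${\mathbb L}_{u}$-models) set up beforehand. Your explicit verification that a formula of the form $I(\{\vec{x}.\psi\})$ is ${\mathbb L}_{u}$-equivalent to a Horn formula presenting the same model both as an ${\mathbb L}$-model and as an ${\mathbb L}_{u}$-model, which yields the term-representability of every provably functional formula, is a detail the paper leaves implicit, and you supply it correctly.
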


Thanks to Theorem \ref{synequiv}, we can now describe a method for obtaining an axiomatization of the quotient of ${\mathbb{MV}}$ corresponding to a given quotient of the theory ${\mathbb L}_{u}$ as in Theorem \ref{restrictions} (recall that the converse direction was already addressed by Proposition \ref{produal}). Indeed, since the classifying toposes of $\mathbb{MV}$ (resp. of ${\mathbb L}_{u}$) can be represented in the form $[{\mathscr{C}^{alg}_{\mathbb{MV}}}^{\textrm{op}}, \Set]$ (resp. in the form $[{\mathscr{C}^{irr}_{{\mathbb L}_{u}}}^{\textrm{op}}, \Set]$), by the duality theorem, the quotients of $\mathbb{MV}$ (resp. of ${\mathbb L}_{u}$) are in bijective correspondence with the Grothendieck topologies on the category ${\mathscr{C}^{alg}_{\mathbb{MV}}}$ (resp. on the category $\mathscr{C}^{irr}_{{\mathbb L}_{u}}$); as the categories ${\mathscr{C}^{alg}_{\mathbb{MV}}}$ and ${\mathscr{C}^{irr}_{{\mathbb L}_{u}}}$ are equivalent, the Grothendieck topologies on the two categories correspond to each other bijectively through this equivalence, yielding the desired correspondence between the quotient theories. Specifically, any Grothendieck topology $K$ on ${\mathscr{C}^{irr}_{{\mathbb L}_{u}}}$ corresponds to the quotient ${\mathbb L}^{K}_{u}$ of ${\mathbb L}_{u}$ consisting of all the geometric sequents of the form $(\psi \vdash_{\vec{y}} \mathbin{\mathop{\textrm{\huge $\vee$}}\limits_{i\in I}}(\exists \vec{x_{i}})\theta_{i})$, where $\{\vec{y}. \psi\}$ and the $\{\vec{x_{i}}. \phi_{i}\}$ are all ${\mathbb L}_{u}$-irreducible formulas, the $\{ \vec{x_{i}}, \vec{y}.\theta_{i}\}$ are geometric formulae over the signature of ${\mathbb L}_{u}$ which are ${\mathbb L}_{u}$-provably functional from $\{\vec{x_{i}}. \phi_{i}\}$ to $\{\vec{y}. \psi\}$ and the sieve generated by the family of arrows $\{[\theta_{i}]:\{\vec{x_{i}}. \phi_{i}\} \to  \{\vec{y}. \psi\} \textrm{ | } i\in I \}$ in ${\mathscr{C}^{irr}_{{\mathbb L}_{u}}}$ generates a $K$-covering sieve. Conversely, every quotient $\mathbb S$ of ${\mathbb L}_{u}$ is syntactically equivalent to a quotient of this form (cf. Theorem \ref{thmpresheaf}); the Grothendieck topology associated to it is therefore the topology on the category ${\mathscr{C}^{irr}_{{\mathbb L}_{u}}}$ generated by the (sieves generated by the) families of arrows $\{[\theta_{i}]:\{\vec{x_{i}}. \phi_{i}\} \to  \{\vec{y}. \psi\} \textrm{ | } i\in I \}$ in ${\mathscr{C}^{irr}_{{\mathbb L}_{u}}}$  such that the sequent $(\psi \vdash_{\vec{y}} \mathbin{\mathop{\textrm{\huge $\vee$}}\limits_{i\in I}}(\exists \vec{x_{i}})\theta_{i})$ is provable in ${\mathbb S}$. An analogous correspondence holds for the theory $\mathbb{MV}$ (with the ${\mathbb L}_{u}$-irreducible formulae being replaced by the formulae in ${\mathscr{C}^{alg}_{\mathbb{MV}}}$). The quotient of $\mathbb{MV}$ corresponding to a given quotient $\mathbb S$ of ${\mathbb L}_{u}$ is thus the quotient of $\mathbb{MV}$ corresponding to the Grothendieck topology on ${\mathscr{C}^{alg}_{\mathbb{MV}}}$ obtained by transferring the Grothendieck topology on ${\mathscr{C}^{irr}_{{\mathbb L}_{u}}}$ associated to $\mathbb S$ along the equivalence $\mathscr{C}^{alg}_{\mathbb{MV}} \simeq \mathscr{C}^{irr}_{{\mathbb L}_{u}}$ of Theorem \ref{synequiv}.

\subsection{Geometric compactness and completeness for ${\mathbb L}_{u}$}\label{logicprop}

The Morita-equivalence between the geometric theory ${\mathbb L}_{u}$ of abelian $\ell$-groups with strong unit and the algebraic theory $\mathbb{MV}$ of MV-algebras implies a form of compactness and completeness for the theory ${\mathbb L}_{u}$, properties which are \emph{a priori} not expected as this theory is infinitary. 

\begin{theorem}
\begin{enumerate}[(i)]
\item For any geometric sequent $\sigma$ in the signature $\Sigma_{{\mathbb L}_{u}}$, $\sigma$ is valid in all abelian $\ell$-groups with strong unit in $\Set$ if and only if it is provable in the theory ${\mathbb L}_{u}$;

\item For any geometric sentences $\phi_{i}$ in the signature $\Sigma_{{\mathbb L}_{u}}$, $\top \vdash \mathbin{\mathop{\textrm{ $\bigvee$}}\limits_{i\in I}}\phi_{i}$ is provable in ${\mathbb L}_{u}$ (equivalently by $(i)$, every abelian $\ell$-group with strong unit in $\Set$ satisfies at least one of the $\phi_{i}$) if and only if there exists a finite subset $J\subseteq I$ such that the sequent $\top \vdash \mathbin{\mathop{\textrm{ $\bigvee$}}\limits_{i\in J}}\phi_{i}$ is provable in ${\mathbb L}_{u}$ (equivalently by $(i)$, every abelian $\ell$-group with strong unit in $\Set$ satisfies at least one of the $\phi_{i}$ for $i\in J$).
\end{enumerate}
\end{theorem}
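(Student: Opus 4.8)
The plan is to argue entirely at the level of the common classifying topos, exploiting the Morita-equivalence $\mathbf{Set}[\mathbb{MV}]\simeq \mathbf{Set}[\mathbb{L}_u]$ of Theorem \ref{main}. The soundness (`if') directions of both parts are just the usual soundness of the geometric proof system, so I would only need to treat the two converse directions, and I would reduce both of them to a single structural fact about the classifying topos.

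First I would record that structural fact. The theory $\mathbb{MV}$ is algebraic, hence finitary (coherent), so its classifying topos $\mathbf{Set}[\mathbb{MV}]$ is a coherent topos. I would then invoke two standard properties of coherent toposes: by Deligne's theorem they have enough points, and they are compact, in the sense that the terminal object is a compact object (every covering of $1$ by a family of subobjects admits a finite subcovering). Both are invariants of a topos up to equivalence, so they transfer to $\mathbf{Set}[\mathbb{L}_u]$ along the equivalence of Theorem \ref{main}. This is precisely the `bridge': for the infinitary theory $\mathbb{L}_u$ neither property is to be expected a priori, and it is exactly its Morita-equivalence with the finitary theory $\mathbb{MV}$ that forces them.

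To derive $(i)$ I would use that the generic model $U_{\mathbb{L}_u}$ of $\mathbb{L}_u$ in $\mathbf{Set}[\mathbb{L}_u]$ satisfies exactly the geometric sequents provable in $\mathbb{L}_u$, and that the points of $\mathbf{Set}[\mathbb{L}_u]$ are precisely the abelian $\ell$-groups with strong unit in $\Set$, with the inverse image $p^{*}$ of such a point sending $U_{\mathbb{L}_u}$ to the corresponding set-based model. Since validity of $\sigma=(\phi\vdash_{\vec{x}}\psi)$ amounts to the canonical monomorphism $[[\vec{x}. \phi\wedge\psi]]\rightarrowtail [[\vec{x}. \phi]]$ being an isomorphism, and each geometric functor $p^{*}$ preserves these interpretations, validity of $\sigma$ in every set-based model would force that monomorphism, computed in $U_{\mathbb{L}_u}$, to become an isomorphism under every $p^{*}$; enough points (joint conservativity of the $p^{*}$) then makes it an isomorphism already in $\mathbf{Set}[\mathbb{L}_u]$, so $\sigma$ holds in $U_{\mathbb{L}_u}$ and is therefore provable.

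To derive $(ii)$ I would interpret each geometric sentence $\phi_i$ in $U_{\mathbb{L}_u}$ as a subterminal object of $\mathbf{Set}[\mathbb{L}_u]$ and observe that $\top\vdash\bigvee_{i\in I}\phi_i$ is provable exactly when these subterminals cover the terminal object $1$; compactness of $1$ then yields a finite $J\subseteq I$ whose members already cover $1$, which is the provability of $\top\vdash\bigvee_{i\in J}\phi_i$, the semantic reformulations being supplied by $(i)$. The hard part — indeed the only non-routine point — is the structural fact that $\mathbf{Set}[\mathbb{L}_u]$ has enough points and a compact terminal object; everything else is the standard internal semantics of a classifying topos (the dictionary between sentences and subterminals and between disjunctions and joins, and the coincidence of provability with validity in the generic model).
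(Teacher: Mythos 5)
Your proposal is correct, and for part (ii) it coincides with the paper's proof: both transfer the compactness of the terminal object of the coherent topos $\mathbf{Set}[\mathbb{MV}]$ across the equivalence of Theorem \ref{main} and then read off the finite subcover through the dictionary between subterminal objects of $\mathbf{Set}[\mathbb{L}_u]$ and ${\mathbb L}_{u}$-provable equivalence classes of geometric sentences (the paper spells this dictionary out via the Yoneda embedding of the syntactic site, whereas you invoke it as the standard semantics of the generic model, but the content is the same). For part (i) you take a mildly different route: the paper deduces the completeness property from the fact that a theory classified by a \emph{presheaf} topos has enough finitely presentable set-based models, while you obtain ``enough points'' for $\mathbf{Set}[\mathbb{L}_u]$ by applying Deligne's theorem to the coherent topos $\mathbf{Set}[\mathbb{MV}]$ and transporting the invariant along the equivalence. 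Both arguments are valid; the paper's version is more elementary (no appeal to Deligne) and yields the slightly stronger information that the finitely presentable models already suffice, whereas yours has the advantage of using exactly the same invariant-transfer mechanism as part (ii), namely coherence of $\mathbf{Set}[\mathbb{MV}]$, making the two halves of the theorem consequences of a single structural fact. Your reduction of validity of a sequent to an isomorphism of subobjects reflected by the jointly conservative inverse images of points is the standard argument and is carried out correctly.
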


\begin{proof}
$(i)$ This follows from the fact that every theory classified by a presheaf topos has enough (finitely presentable) set-based models. 

$(ii)$ The fact that the theory $\mathbb{MV}$ is algebraic implies that its classifying topos is coherent; in particular, its terminal object is a compact object of the topos $\Set[\mathbb{MV}]$, in the sense that every covering of it in the topos by a family of subobjects admits a finite subcovering. On the other hand, the terminal object in the classifying topos $\Set[{\mathbb L}_{u}]\simeq \Sh(\mathscr{C}_{{\mathbb L}_{u}}, J_{{\mathbb L}_{u}})$ is the image of the object $\{[].\top\}$ of $\mathscr{C}_{{\mathbb L}_{u}}$ under the Yoneda embedding $y:\mathscr{C}_{{\mathbb L}_{u}} \hookrightarrow \Sh(\mathscr{C}_{{\mathbb L}_{u}}, J_{{\mathbb L}_{u}})$, and its subobjects are precisely the images under $y$ of the subobjects of $\{[].\top\}$ in $\mathscr{C}_{{\mathbb L}_{u}}$, that is of the ${\mathbb L}_{u}$-provable equivalence classes of geometric sentences over the signature $\Sigma_{{\mathbb L}_{u}}$. The thesis thus follows from the fact that a family of such subobjects $\{ y(\{[].\phi_{i}\}) \mid i\in I\}$ covers $y(\{[]. \top\})$ in $\Set[{\mathbb L}_{u}]$ if and only if the sequent $\top \vdash \mathbin{\mathop{\textrm{ $\bigvee$}}\limits_{i\in I}}\phi_{i}$ is provable in ${\mathbb L}_{u}$. 
\end{proof}
  
Notice that this proof represents an application of the `bridge technique' in the context of the equivalence $\Sh(\mathscr{C}_{{\mathbb L}_{u}}, J_{{\mathbb L}_{u}})\simeq [\mathbf{MV}_{f.p.}, \Set]$ and the topos-theoretic concept of terminal object.

\section{Appendix: topos-theoretic background}\label{s:background}

In this section we introduce the basic topos-theoretic background necessary to understand the paper, for the benefit of readers who are not familiar with topos theory. Classical references on the subject, in increasing order of sophistication, are \cite{Goldblatt}, \cite{MM} and \cite{El}.

\subsection{Geometric theories and categorical semantics}\label{geom}

A \emph{geometric theory} over a first-order signature $\Sigma$ is a theory whose axioms can be presented in the form $\phi\vdash_{\vec{x}}\psi$, where $\phi$ and $\psi$ are \emph{geometric formulae}, i.e. formulae with finite number of free variables in the context $\vec{x}$ built up from atomic formulae over $\Sigma$ by only using finitary conjunctions, infinitary disjunctions and existential quantifications.

A \emph{finitary algebraic theory} is a geometric theory over a signature without relation symbols such that all its axioms are of the form $(\top \vdash_{\vec{x}} t=s)$, where $t$ and $s$ are terms over $\Sigma$ in the context $\vec{x}$.

Recall that a Grothendieck topos is (a category equivalent to) a category $\Sh(\mathscr{C}, J)$ of sheaves on a site $(\mathscr{C}, J)$, and that a geometric morphism $f:\mathscr{F}\to \mathscr{E}$ of Grothendieck toposes is a pair of adjoint functors such that the left adjoint $f^{\ast}:\mathscr{E}\to \mathscr{F}$ preserves finite limits. In particular, the category $\Set$ of sets and functions between them is a Grothendieck topos.

One can define the notion of model of a geometric theory $\mathbb T$ in a Grothendieck topos $\mathscr{E}$ by generalizing the classical Tarskian definition of model of a first-order theory (in sets). 

\begin{definition}
Let $\mathscr{E}$ be a topos and $\Sigma$ be a (possibly multi-sorted) first-order signature. A $\Sigma$-\textit{structure} $M$ in $\mathscr{E}$ is specified by the following data:
\begin{enumerate}
\item[(i)] a function assigning to each sort $A$ of $\Sigma$, an object $MA$ of $\mathscr{E}$. This function is extended to finite strings of sort by defining $M(A_{1},...,A_{n})=MA_{1}\times ...\times MA_{n}$ (and setting $M([])$, where [] denotes the empty string, equal to the terminal object 1 of $\mathscr{E}$);
\item[(ii)] a function assigning to each function symbol $f:A_{1}...A_{n}\rightarrow B$ in $\Sigma$ an arrow $Mf:M(A_{1},...,A_{n})\rightarrow MB$ in $\mathscr{E}$;
\item[(iii)] a function assigning to each relation symbol $R\rightarrowtail A_{1}...A_{n}$ in $\Sigma$ a subobject $MR\rightarrowtail M(A_{1},...A_{n})$ in $\mathscr{E}$.
\end{enumerate}

The $\Sigma$-structures in $\mathscr{E}$ are the objects of a category $\Sigma$-$\mathbf{Str}(\mathscr{E})$ whose arrows are the $\Sigma$-structure homomorphisms. Such homomorphisms $h:M\rightarrow N$ are specified by a collection of arrows $h_{A}:MA\rightarrow NA$ in $\mathscr{E}$, indexed by the sorts of $\Sigma$. For more details see section D1.1\cite{El}.
\end{definition}

Let $\mathscr{E}$ and $\mathscr{F}$ be toposes. Any functor $T:\mathscr{E}\rightarrow \mathscr{F}$ which preserves finite products and monomorphisms induces a functor $\Sigma$-$\mathbf{Str}(T):\Sigma$-$\mathbf{Str}(\mathscr{E})\rightarrow \Sigma$-$\mathbf{Str}(\mathscr{F})$ in the obvious way.

Let $M$ be a  $\Sigma$-structure  in $\mathscr{E}$. Any first order formula $\vec{x}.\phi$ over $\Sigma$ will be interpreted as a subobject $[[\vec{x}.\phi]]_{M}\rightarrowtail M(A_{1},...A_{n})$; this interpretation is defined recursively on the structure of the formula.

\begin{definition}
Let $M$ be a $\Sigma$-structure in a topos $\mathscr{E}$.
\begin{itemize}
\item[(a)] If $\sigma=(\phi\vdash_{\vec{x}}\psi)$ is a first-order sequent over $\Sigma$, we say $\sigma$ is \textit{satisfied in} $M$ (and write $M\models \sigma$) if $[[\vec{x}.\phi]]_{M}\leq [[\vec{x}.\psi]]_{M}$ in the lattice $Sub_{\mathscr{E}}(M(A_{1},...,A_{n}))$ of subobjects of $M(A_{1},...,A_{n})$ in $\mathscr{E}$.
\item[(b)] If $\mathbb{T}$ is a geometric theory over $\Sigma$, we say $M$ is a \textit{model} of $\mathbb{T}$ (and write $M\models \mathbb{T}$) if all the axioms of $\mathbb{T}$ are satisfied in $M$.
\item[(c)] We write $\mathbb{T}$-mod$(\mathscr{E})$ for the full subcategory of $\Sigma$-$\mathbf{Str}(\mathscr{E})$ whose objects are the models of $\mathbb{T}$.
\end{itemize}
\end{definition}

\subsection{The internal language of a topos}\label{intlanguage}

As a category, a Grothendieck topos has all small limits and colimits, as well as exponentials and a subobject classifier. It can thus be considered as a mathematical universe in which one can perform all the usual (bounded) set-theoretic constructions. More specifically, one can attach to any topos $\mathscr{E}$ a canonical signature $\Sigma_\mathscr{E}$, called its \emph{internal language}, having a sort $\name{A}$ for each object $A$ of $\mathscr{E}$, a function symbol $\name{f}:\name{A_{1}} ... \name{A_{n}}\to \name{B}$ for each arrow $f:A_{1}\times \cdots \times A_{n}\to B$ in $\mathscr{E}$ and a relation symbol $\name{R}\mono \name{A_{1}} ... \name{A_{n}}$ for each subobject $R\mono A_{1}\times \cdots \times A_{n}$ in $\mathscr{E}$. There is a tautological $\Sigma_\mathscr{E}$-structure ${\cal S}_\mathscr{E}$ in $\mathscr{E}$, obtained by interpreting each $\name{A}$ as $A$, each $\name{f}$ as $f$ and each $\name{R}$ as $R$. For any objects $A_{1}, \ldots, A_{n}$ of $\mathscr{E}$ and any first-order formula $\phi(\vec{x})$ over $\Sigma_\mathscr{E}$, where $\vec{x}=(x_{1}^{\name{A_{1}}}, \ldots, x_{n}^{\name{A_{n}}})$, the expression $\{\vec{x}\in A_{1}\times \cdots \times A_{n} \mid \phi(\vec{x})\}$ can be given a meaning, namely the interpretation of the formula $\phi(\vec{x})$ in the $\Sigma_\mathscr{E}$-structure ${\cal S}_\mathscr{E}$.

The logic of a topos being in general intuitionistic, any formal proof involving first-order sequents over the signature $\Sigma_\mathscr{E}$ will be valid in the structure ${\cal S}_\mathscr{E}$ provided that the law of excluded middle or any other non-constructive principles are not employed in it.

\subsection{Classifying toposes}

\begin{definition}
Let $\mathbb{T}$ be a geometric theory. A Grothendieck topos $\mathscr{E}$ is said to be a \emph{classifying topos} for $\mathbb T$ if for any Grothendieck topos $\mathscr{F}$, the category of geometric morphisms $\mathscr{F}\to \mathscr{E}$ is equivalent to the category of models of $\mathbb T$ inside $\mathscr{F}$, naturally in $\mathscr{F}$.
\end{definition}

Clearly, a classifying topos of a given geometric theory $\mathbb T$ is unique up to categorical equivalence; we shall denote it by $\Set[{\mathbb T}]$. Every geometric theory has a classifying topos; conversely, every Grothendieck topos is the classifying topos of a geometric theory, albeit not canonically.

Classifying toposes for geometric theories can be built directly through the construction of syntactic sites.

Let $\mathbb{T}$ be a geometric theory and $\phi(\vec{x})$ and $\psi(\vec{y})$ be two formulae in $\mathbb{T}$, where $\vec{x}$ and $\vec{y}$ are contexts of the same type and length. We say that these formulae are $\alpha$-equivalent if and only if
$\psi(\vec{y})$ is obtained from $\phi(\vec{x})$ by an acceptable change of variables, i.e. every free occurrence of $x_i$ is replaced by $y_i$ in $\phi$ and each $x_i$ is free for $y_i$ in $\phi$. We write $\{\vec{x}. \phi\}$ for the $\alpha$-equivalence class of the formula $\phi{\vec{x}}$. 

\begin{definition}
Let $\mathbb T$ be a geometric theory over a signature $\Sigma$. The \emph{geometric syntactic category} $\mathscr{C}_{\mathbb T}$ of $\mathbb T$ has as objects the geometric formulae-in-context $\{\vec{x}.\phi\}$ and as arrows $\{\vec{x}.\phi\}$ and $\{\vec{y}.\psi\}$ the $\mathbb T$-provable equivalence classes $[\theta]$ of geometric formulae $\theta(\vec{x},\vec{y})$, where $\vec{x}$ and $\vec{y}$ are disjoint contexts, which are $\mathbb T$-provably functional from $\{\vec{x}.\phi\}$ to $\{\vec{y}.\psi\}$, i.e. such that the sequents
\begin{itemize}
\item[-]$(\theta\vdash_{\vec{x},\vec{y}}(\phi\wedge\psi))$
\item[-]$(\theta\wedge \theta[\vec{z}/\vec{y}]\vdash_{\vec{x},\vec{y},\vec{z}}(\vec{z}=\vec{y}))$
\item[-]$(\phi\vdash_{\vec{x}}(\exists\vec{y})\theta)$
\end{itemize} 
are provable in $\mathbb{T}$.   

We shall say that two geometric formulae-in-context $\{\vec{x}. \phi\}$ and $\{\vec{y}. \psi\}$, where $\vec{x}$ and $\vec{y}$ are disjoint, are \emph{$\mathbb T$-equivalent} if they are isomorphic objects in the syntactic category $\mathscr{C}_{\mathbb T}$, that is if there exists a geometric formula $\theta(\vec{x}, \vec{y})$ which is $\mathbb T$-provably functional from $\{\vec{x}. \phi\}$ to $\{\vec{y}. \psi\}$ and which moreover satisfies the property that the sequent $(\theta \wedge \theta[\vec{x'}\slash \vec{x}] \vdash_{\vec{x}, \vec{x'}, \vec{y}} \vec{x}=\vec{x'})$ is provable in $\mathbb T$.
\end{definition}
 
We can equip the geometric category $\mathscr{C}_{\mathbb{T}}$ with its canonical coverage, consisting of all sieves generated by small covering families, i.e. families of the form $\{[\vec{x}_i,\vec{y}.\theta_i]\mid i\in I\}$, where $[\theta_i]$ are arrows from $\{\vec{x}_i.\phi_i\}$ to $\{\vec{y}.\psi\}$ in $\mathscr{C}_{\mathbb{T}}$ and the sequent $\psi\vdash_{\vec{y}}\bigvee_{i\in I}(\exists \vec{x}_i\theta_i)$ is provable in $\mathbb{T}$. We call this topology $J_{\mathbb{T}}$. 

The topos $\mathbf{Sh}(\mathscr{C}_{\mathbb{T}},J_{\mathbb{T}})$ satisfies the universal property of the classifying topos for $\mathbb T$.

Classifying toposes for finitary algebraic theories can be built in the following alternative way.

Given a finitary algebraic theory $\mathbb T$, let $\mathscr{C}^{alg}_{\mathbb T}$ be the category whose objects are the finite conjunctions of atomic formulae-in-context (up to $\alpha$-equivalence) over the signature of $\mathbb{\mathbb T}$ and whose arrows $\{\vec{x}. \phi\}\to \{\vec{y}. \psi\}$ (where the contexts $\vec{x}=(x_{1},\ldots, x_{n})$ and $\vec{y}=(y_{1}, \ldots, y_{m})$ are supposed to be disjoint, without loss of generality) are sequences of terms $t_{1}(\vec{x}), \ldots, t_{m}(\vec{x})$ such that the sequent $(\phi \vdash_{\vec{x}} \psi(t_{1}(\vec{x})), \ldots, t_{n}(\vec{x}))$ is provable in $\mathbb T$, modulo the equivalence relation which identifies two such sequences $\vec{t}$ and $\vec{t'}$ precisely when the sequent $(\phi \vdash_{x} \vec{t}(\vec{x})=\vec{t'}(\vec{x}))$ is provable in $\mathbb{T}$.

The category ${\mathscr{C}^{alg}_{\mathbb T}}$ is dual to the category of finitely presented $\mathbb T$-algebras, and the presheaf topos $[{\mathscr{C}^{alg}_{\mathbb T}}^{\textrm{op}}, \Set]$ satisfies the universal property of the classifying topos for $\mathbb T$.

\subsection{Morita-equivalence and the Duality theorem}\label{Morita}

\begin{definition}
Two geometric theories $\mathbb{T}$ and $\mathbb{T}'$ are said to be \textit{Morita-equivalent} if they have equivalent classifying toposes, equivalently if they have equivalent categories of models in every Grothendieck topos $\mathscr{E}$, naturally in $\mathscr{E}$, that is for each Grothendieck topos $\mathscr{E}$ there is an equivalence of categories
\begin{center}
$\tau_{\mathscr{E}}:\mathbb{T}$-mod$(\mathscr{E})\rightarrow \mathbb{T}'$-mod$(\mathscr{E})$
\end{center}
such that for any geometric morphism $f:\mathscr{F}\rightarrow \mathscr{E}$ the following diagram commutes (up to isomorphism):

\begin{center}
\begin{tikzpicture}
\node (1) at (0,0) {$\mathbb{T}$-mod$(\mathscr{F})$};
\node (2) at (5,0) {$\mathbb{T}'$-mod$(\mathscr{F})$};
\node (3) at (0,2) {$\mathbb{T}$-mod$(\mathscr{E})$};
\node (4) at (5,2) {$\mathbb{T}'$-mod$(\mathscr{E})$};
\draw[->] (1) to node [below,midway] {$\tau_{\mathscr{F}}$} (2);
\draw[->] (3) to node [left, midway] {$f^{*}$} (1);
\draw[->] (4) to node [right, midway] {$f^{*}$} (2);
\draw[->] (3) to node [above,midway] {$\tau_{\mathscr{E}}$} (4);
\end{tikzpicture}
\end{center}
\end{definition}

\begin{definition}
A \textit{quotient} of a geometric theory $\mathbb{T}$ is a geometric theory $\mathbb{T}'$ over the same signature such that every axiom of $\mathbb{T}$ is provable in $\mathbb{T}'$ (cf. \cite{OC6}).
\end{definition}

\begin{theorem}[\textbf{Duality theorem}, \cite{OC6}]
Let $\mathbb{T}$ be a geometric theory over a signature $\Sigma$. Then the assignment sending a quotient of $\mathbb{T}$ to its classifying topos defines a bijection between the quotients of $\mathbb{T}$ (considered up to the equivalence which identifies two quotients precisely when they prove the same geometric sequents over their signature)and the subtoposes of the classifying topos $\mathbf{Set}[\mathbb{T}]$ of $\mathbb{T}$.
\end{theorem}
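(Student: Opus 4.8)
The plan is to factor the claimed bijection through the lattice of Grothendieck topologies on the syntactic site that refine $J_{\mathbb{T}}$. First I would invoke the standard topos-theoretic fact that the subtoposes of any Grothendieck topos $\mathscr{E}$ are classified by the local operators (Lawvere--Tierney topologies) on $\mathscr{E}$, and that when $\mathscr{E}=\mathbf{Sh}(\mathscr{C},J)$ these local operators correspond bijectively to the Grothendieck topologies $J'$ on $\mathscr{C}$ with $J'\supseteq J$. Applied to $\mathbf{Set}[\mathbb{T}]\simeq \mathbf{Sh}(\mathscr{C}_{\mathbb{T}},J_{\mathbb{T}})$, this reduces the statement to producing a bijection between quotients of $\mathbb{T}$ (modulo the stated provable equivalence) and Grothendieck topologies on $\mathscr{C}_{\mathbb{T}}$ containing $J_{\mathbb{T}}$.

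Next I would set up the two maps. Given a quotient $\mathbb{T}'$, each additional axiom $\sigma=(\phi\vdash_{\vec{x}}\psi)$, with $\psi$ a disjunction $\bigvee_{i}(\exists \vec{y_{i}})\psi_{i}$, can be read on $\mathscr{C}_{\mathbb{T}}$ as the demand that the sieve on $\{\vec{x}.\phi\}$ generated by the projections $\{\vec{x},\vec{y_{i}}.\phi\wedge\psi_{i}\}\rightarrowtail\{\vec{x}.\phi\}$ be covering. I would let $J_{\mathbb{T}'}$ be the Grothendieck topology on $\mathscr{C}_{\mathbb{T}}$ generated by $J_{\mathbb{T}}$ together with all such sieves; it manifestly contains $J_{\mathbb{T}}$. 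Conversely, to a topology $J'\supseteq J_{\mathbb{T}}$ I would associate the quotient $\mathbb{T}_{J'}$ obtained by adjoining, for every $J'$-covering sieve generated by a family $\{[\theta_{i}]:\{\vec{x_{i}}.\phi_{i}\}\to\{\vec{y}.\psi\}\}$, the sequent $(\psi\vdash_{\vec{y}}\bigvee_{i}(\exists\vec{x_{i}})\theta_{i})$.

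Then I would prove these two assignments are mutually inverse up to the equivalence on quotients. The crucial ingredient is soundness and completeness of geometric logic with respect to the syntactic site: a geometric sequent $\sigma$ over $\Sigma$ is provable in a quotient $\mathbb{T}'$ if and only if it is satisfied by the universal model of $\mathbb{T}$ in $\mathbf{Sh}(\mathscr{C}_{\mathbb{T}},J_{\mathbb{T}'})$, which in turn holds if and only if the sieve attached to $\sigma$ is $J_{\mathbb{T}'}$-covering. This identifies the sequents provable in $\mathbb{T}'$ with the $J_{\mathbb{T}'}$-covering sieves, so that $\mathbb{T}'\mapsto J_{\mathbb{T}'}\mapsto \mathbb{T}_{J_{\mathbb{T}'}}$ returns a quotient proving exactly the same sequents as $\mathbb{T}'$, while $J'\mapsto \mathbb{T}_{J'}\mapsto J_{\mathbb{T}_{J'}}$ recovers $J'$. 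Finally I would check that the classifying topos of $\mathbb{T}'$ is precisely $\mathbf{Sh}(\mathscr{C}_{\mathbb{T}},J_{\mathbb{T}'})$, realized as the subtopos of $\mathbf{Set}[\mathbb{T}]$ determined by $J_{\mathbb{T}'}$, so that the composite ``quotient $\mapsto$ classifying topos'' is exactly the bijection asserted.

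I expect the main obstacle to be the faithful dictionary between syntax and sieves --- specifically, verifying that the passage from an arbitrary geometric sequent to a covering sieve is compatible with the inference rules of geometric logic, so that provability in the generated theory matches membership in the generated topology. Showing that the topology generated by the sieves of a family of axioms is not \emph{too large} (that is, that it forces no sequents beyond the geometric consequences of those axioms) requires a careful completeness argument and is where the real content lies; by comparison, the closure and idempotency checks defining a Grothendieck topology are routine.
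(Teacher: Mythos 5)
The paper does not prove this theorem: it is quoted verbatim from \cite{OC6} as background, so there is no internal proof to compare against. Measured against the actual argument in \cite{OC6}, your outline follows the same strategy: reduce subtoposes of $\mathbf{Sh}(\mathscr{C}_{\mathbb{T}},J_{\mathbb{T}})$ to Grothendieck topologies $J'\supseteq J_{\mathbb{T}}$ via the standard classification by local operators, set up the two assignments between quotients and such topologies, and prove they are mutually inverse. You have also correctly located where the content lies.

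That said, what you present is a plan rather than a proof, and the step you defer is the theorem. Two points deserve explicit flagging. First, your claim that ``$\sigma$ is provable in $\mathbb{T}'$ iff it is satisfied by the universal model of $\mathbb{T}$ in $\mathbf{Sh}(\mathscr{C}_{\mathbb{T}},J_{\mathbb{T}'})$'' presupposes that this topos classifies $\mathbb{T}'$ with that object as its universal model --- but that is itself one of the things being established, so the argument as written is circular unless you first prove, independently, that $\mathbf{Sh}(\mathscr{C}_{\mathbb{T}},J_{\mathbb{T}'})$ satisfies the universal property of $\mathbf{Set}[\mathbb{T}']$ (e.g.\ by comparing $\mathscr{C}_{\mathbb{T}}$ with the syntactic site of $\mathbb{T}'$ and invoking the Comparison Lemma, or by showing the universal model of $\mathbb{T}$ becomes a conservative model of $\mathbb{T}'$ there). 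Second, the direction ``the generated topology is not too large'' amounts to verifying that each closure condition defining a Grothendieck topology (maximality, pullback-stability, local character) is mirrored by a derivation in geometric logic, so that the set of sieves whose associated sequents are $\mathbb{T}'$-provable is itself a Grothendieck topology containing $J_{\mathbb{T}}$ and the axiom sieves; this is a genuinely delicate induction on the generation of the topology, and it is the bulk of the proof in \cite{OC6}. Your proposal names both obstacles honestly but resolves neither, so as it stands it is an accurate roadmap with the essential work still to be done.
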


\section{Conclusions and future directions}

By interpreting Mundici's equivalence as a Morita-equivalence between two geometric theories, we have opened up the way for an investigation of this equivalence by using the methods of topos theory.

Our applications show that this topos-theoretic approach is fruitful in bringing new insights on this by now classical subject. Also, these results do not exhaust the range of applicability of the `bridge technique' of \cite{OC10} in the context of this specific Morita-equivalence. Indeed, one can naturally expect the consideration of other topos-theoretic invariant properties or constructions admitting appropriate site characterizations to yield further connections between the two theories.  

On the other hand, the generality of the methods employed indicates that it is reasonable to expect them to be applicable in the context of other equivalences. For instance, it is not hard to see that the extended categorical equivalence of \cite{GalatosTsinakis} can be strengthened to a Morita-equivalence (notice that the concepts defined therein of GMV-algebra and of a pair consisting of an $\ell$-group and a core on it whose image generates the group are both formalizable within geometric logic); also, the equivalence of \cite{DiNolaLettieri} can be interpreted as a Morita-equivalence (the concept of radical of a MV-algebra admits a geometric description).

\vspace{1cm}

\textbf{Acknowledgements:} We thank Vincenzo Marra and Antonio Di Nola for useful discussions on the subject matter of this paper. We are also grateful to Giancarlo Meloni for suggesting us to write down a syntactic expression for the object of $\Set[{\mathbb MV}]$ corresponding to the object $\{x. \top\}$ of $\Set[{\mathbb L}_{u}]$ under the Morita-equivalence $\Set[{\mathbb MV}]\simeq \Set[{\mathbb L}_{u}]$ of Theorem \ref{main} (cf. Remark \ref{rem2}).

\newpage

\emph{Olivia Caramello}

{\small \textsc{Institut des Hautes \'Etudes Scientifiques, 35 route de Chartres, 91440, Bures-sur-Yvette, France}\\
\emph{E-mail address:} \texttt{olivia@ihes.fr}}

\vspace{0.5cm}

\emph{Anna Carla Russo}

{\small \textsc{Dipartimento di Matematica e Informatica, Universit\'a di Salerno, Via Giovanni Paolo II, 84084 Fisciano, Italy}\\
\emph{E-mail address:} \texttt{anrusso@unisa.it}}

\end{document}